\newcommand{\dotminus}{\mathbin{\text{\@dotminus}}}
\newcommand{\@dotminus}{%
  \ooalign{\hidewidth\raise1ex\hbox{.}\hidewidth\cr$\m@th-$\cr}%
}
\newtheorem{theorem}{Theorem}[section]
\newtheorem{lemma}[theorem]{Lemma}
\newtheorem{corollary}[theorem]{Corollary}
\newtheorem{proposition}[theorem]{Proposition}
\newtheorem{problem}[theorem]{Problem}
\newtheorem{claim}[theorem]{Claim}
\newtheorem{fact}[theorem]{Fact}
\theoremstyle{definition}
\newtheorem{definition}[theorem]{Definition}
\def\e{\epsilon}
\newcommand{\VC}{\textnormal{VC}}
\newcommand{\disc}{\textnormal{disc}}
\newcommand{\oct}{\textnormal{oct}}
\newcommand{\dev}{\textnormal{dev}}
\newcommand{\calF}{\mathcal{F}}
\newcommand{\calP}{\mathcal{P}}
\newcommand{\calQ}{\mathcal{Q}}
\newcommand{\calX}{\mathcal{X}}
\def\IP{\operatorname{IP}}
\def\triads{\operatorname{Triads}}
\newenvironment{proofof}[1]{\indent{\scshape Proof of #1}:~~}{\qed}
\begin{document}
\title[]{An improved bound for regular decompositions of $3$-uniform hypergraphs of bounded $\VC_2$-dimension}
\author{C. Terry}\thanks{The author was partially supported by NSF grant DMS-2115518}
\address{Department of Mathematics, The Ohio State University, Columbus, OH 43210, USA}
\email{terry.376@osu.edu}
\maketitle

\begin{abstract}
A regular partition $\calP$ for a $3$-uniform hypergraph $H=(V,E)$ consists of a partition $V=V_1\cup \ldots \cup V_t$ and for each $ij\in {[t]\choose 2}$, a partition $K_2[V_i,V_j]=P_{ij}^1\cup \ldots \cup P_{ij}^{\ell}$, such that certain quasirandomness properties hold.  The \emph{complexity of $\calP$} is the pair $(t,\ell)$.  In this paper we show that if a $3$-uniform hypergraph $H$ has $\VC_2$-dimension at most $k$, then there is such a regular partition $\calP$ for $H$ of complexity $(t,\ell)$, where $\ell$ is bounded by a polynomial in the degree of regularity.  This is a vast improvement on the bound arising from the proof of this regularity lemma in general, in which the bound generated for $\ell$ is of Wowzer type.  This can be seen as a higher arity analogue of the efficient regularity lemmas for graphs and hypergraphs of bounded VC-dimension due to Alon-Fischer-Newman \cite{Alon.2007}, Lov\'{a}sz-Szegedy \cite{Lovasz}, and Fox-Pach-Suk \cite{Fox.2017bfo}.
\end{abstract} 

\section{Introduction}

Szemer\'{e}di's regularity lemma is an important theorem with many applications in extremal combinatorics. The  proof of the regularity lemma, which first appeared in the 70's \cite{Szemeredi}, was well known to produce tower-type bounds in $\e$.  The question of whether this type of bound is necessary was resolved in the late 90's by Gowers' lower bound construction \cite{Gowers.1997}, which showed tower bounds are indeed required (see also \cite{Fox.2014, Moshkovitz.2013, Conlon}).  

Hypergraph regularity was developed in the 2000's by Frankl, Gowers, Kohayakawa, Nagle,  R\"{o}dl, Skokan, Schacht \cite{Frankl.2002, Gowers.20063gk, Gowers.2007, Rodl.2005, Rodl.2004, Nagle.2013}, in order to prove a general counting lemma for hypergraphs.  These types of regularity lemmas are substantially more complicated than prior regularity lemmas.  In particular, a regular partition of a $k$-uniform hypergraph involves a sequence $\calP_1,\ldots, \calP_{k-1}$ where $\calP_i$ is a collection of subsets ${V\choose i}$, such that certain quasi-randomness properties hold for each $\calP_i$ relative to $\calP_1,\ldots, \calP_{i-1}$.  The proofs of these strong regularity lemmas produce Ackerman style bounds for the size of each $\calP_i$.  Given a function $f$, let $f^{(i)}$ denote the $i$-times iterate of $f$.  We then define $Ack_1(x)=2^x$, and for $i>1$, $Ack_i(x)=Ack_{k-1}^{(i)}(x)$.  The proofs of the strong regularity lemma for $k$-uniform hypergraphs produce bounds for the size of each $\calP_i$ of the form $Ack_k$.  It was shown by Moshkovitz and Shapira \cite{Moshkovitz.2019} that this type of bound is indeed necessary for the size of $\calP_1$, which corresponds to the partition of the vertex set.

In the case of $3$-uniform hypergraphs, a decomposition in this sense consists of a partition $\calP_1=\{V_1,\ldots, V_t\}$ of $V$, and a set $\calP_2=\{P_{ij}^{\alpha}:ij\in {[t]\choose 2}, \alpha\in [\ell]\}$, where for each $ij\in {[t]\choose 2}$, $P_{ij}^1\cup \ldots \cup P_{ij}^{\ell}$ is a partition of $K_2[V_i,V_j]$.  The \emph{complexity} of $\calP$ is the pair $(t,\ell)$.  We give a formal statement of the regularity lemma for $3$-graphs here for reference, and refer the reader to Subsection \ref{ss:regularity} for the precise definitions involved.  The version stated below is a refinement of a regularity lemma due to Gowers \cite{Gowers.2007} (for more details see Subsection \ref{ss:regularity}). 

\begin{theorem}[Strong Regularity Lemma for $3$-graphs]\label{thm:reg1} For all $\e_1>0$, and every function $\e_2:\mathbb{N}\rightarrow (0,1]$, there exist positive integers $T_0$, $L_0$, and $n_0$ such that for any $3$-graph $H=(V,E)$ on $n\geq n_0$ vertices, there exists a $\dev_{2,3}(\e_1,\e_2(\ell))$-regular, $(t,\ell,\e_1,\e_2(\ell))$-decomposition $\calP$ for $H$ with $t\leq T_0$ and $\ell \leq L_0$.
\end{theorem}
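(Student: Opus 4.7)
The plan is to prove Theorem \ref{thm:reg1} via an energy-increment argument, following the standard template for strong hypergraph regularity as developed by Gowers \cite{Gowers.2007} and R\"odl--Schacht \cite{Rodl.2005}. First I would introduce an \emph{index} (mean-square-density) function $q(\calP)$ for decompositions of the pair $(V,E)$, essentially summing, over triads of blocks in $\calP_2$, the squared edge densities of $H$ restricted to these triads weighted by their relative size. Since edge densities lie in $[0,1]$, one has $0 \le q(\calP) \le 1$ for every decomposition $\calP$. The entire proof then reduces to a \emph{refinement lemma}: any $(t,\ell,\e_1,\e_2(\ell))$-decomposition $\calP$ that is not $\dev_{2,3}(\e_1,\e_2(\ell))$-regular admits a refinement $\calP'$ whose index exceeds $q(\calP)$ by a fixed polynomial in $\e_1$ and $\e_2(\ell)$.

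Second, the refinement lemma itself operates at two levels, which is the source of the Wowzer-type growth in $T_0$. If $\calP$ fails to be $\dev_{2,3}$-regular, there exists a significantly-sized collection of triads $(P_{ij}^{\alpha}, P_{jk}^{\beta}, P_{ik}^{\gamma})$ on which the edge density of $H$ deviates by more than $\e_2(\ell)$ in the octahedral sense. For each such bad triad, a Cauchy--Schwarz / cylinder-counting argument in the style of Gowers produces subsets of the three bipartite cells that witness the deviation. Refining each $\calP_{ij}$ by intersecting its classes with the cylinders arising from all bad triads gives a new pair-partition with a larger $\ell'$. Crucially, to guarantee that the refined decomposition is again $(t',\ell',\e_1,\e_2(\ell'))$-valid, one must re-establish $\e_1$-regularity of the new bipartite cells, which is done by applying the standard (graph) regularity lemma, with parameter depending on $\ell'$, to the auxiliary bipartite graphs on each $(V_i,V_j)$; this simultaneously produces the refinement $t'$ of the vertex partition.

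Third, iterate. Each application of the refinement lemma increases $q(\cdot)$ by a fixed quantity, so the process halts after at most $O(1/\mathrm{poly}(\e_1,\e_2(\cdot)))$ steps, yielding a $\dev_{2,3}(\e_1,\e_2(\ell))$-regular decomposition. At this point one must pass from the resulting partition to one with the exact parameter requirements of the statement, which amounts to absorbing rounding/cleanup steps (equitable partitioning, discarding exceptional vertices) and checking that the final $t$ and $\ell$ depend only on $\e_1$ and the function $\e_2$, giving the constants $T_0$ and $L_0$.

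The main obstacle I expect is not the index-increment estimate, but managing the feedback loop between the two levels: each refinement of the pair partition forces a refinement of the vertex partition with a parameter $\e_2(\ell')$ that depends on the \emph{new} $\ell'$ just produced, and one must verify that the bound on $\ell'$ produced by the cylinder step does not in turn blow up $t$ beyond control. Setting up the parameters so that the induction closes, and verifying that the refined decomposition still satisfies all the defect inequalities required by the definition of a $\dev_{2,3}$-regular decomposition, is the delicate bookkeeping portion of the argument.
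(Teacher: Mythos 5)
The paper does not prove Theorem~\ref{thm:reg1} at all; it cites it. As noted in Subsection~\ref{ss:regularity}, the version stated is a mild reformulation of Theorem~\ref{thm:reg2}, which is attributed to Gowers \cite{Gowers.20063gk} (without equitability of $\calP_2$), with the equitable version appearing in \cite{Nagle.2013} using techniques from \cite{Frankl.2002}. So there is no in-paper proof against which to check yours.

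That said, your energy-increment sketch is a reasonable reconstruction of the proof strategy used in the references the paper cites, and the main structural points are correctly identified: the mean-square-density index bounded in $[0,1]$, the two-level refinement in which a failure of $\dev_{2,3}$-regularity is converted via Cauchy--Schwarz/cylinder arguments into a refinement of $\calP_2$ of bounded complexity, the need to re-regularize the new bipartite cells by invoking graph regularity on each $(V_i,V_j)$ (which drives the growth of $t$), and the feedback loop in which the regularity parameter $\e_2(\ell')$ depends on the $\ell'$ just produced. You correctly flag this last point as the source of the Wowzer growth and as the main bookkeeping hazard. Two small cautions if you were to flesh this out: (i) the paper works with $\dev_{2,3}$ rather than $\oct_{2,3}$, and while the two are equivalent, carrying out the counting/defect step directly for $\dev$ (as in \cite{Gowers.20063gk}) avoids passing through an equivalence whose quantitative form is itself nontrivial (the paper makes exactly this point after Proposition~\ref{prop:suffvc2}); and (ii) the equitability of $\calP_2$ in the final statement is not automatic from the energy-increment step and requires an additional post-processing argument in the spirit of \cite{Frankl.2002}, which is worth making explicit rather than folding into ``rounding/cleanup.''
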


 In Theorem \ref{thm:reg1}, the parameter $T_0$ is the bound for $t$, the size of the vertex partition, and $L_0$ is the bound for $\ell$, the size of the partition of $K_2[V_i,V_j]$, for each $ij\in {[t]\choose 2}$. The proof of Theorem \ref{thm:reg1} generates a Wowzer (i.e. $Ack_3$) type bound for both $t$ and $\ell$.  Moshkovitz and Shapira showed in \cite{Moshkovitz.2019} that there exist $3$-uniform hyerpgraphs requiring a Wowzer type bound for the size of $t$ in Theorem \ref{thm:reg1}.  Less attention has been paid to the form of the bound $L_0$, and it remains open whether this is necessarily of Wowzer type.  In recent work of the author and Wolf \cite{Terry.2021b}, the partition $\calP_2$ plays a crucial role in the proof of a strong version of Theorem \ref{thm:reg1} in a combinatorially tame setting.   This work suggests that understanding the form of the bound for $\ell$ is also an interesting problem.

In the case of graphs, it was shown that dramatic improvements on the bounds in Szemer\'{e}di's regularity lemma can be obtained under the hypothesis of bounded VC-dimension.  In particular, Lov\'{a}sz and Szegedy  \cite{Lovasz} showed that if a graph $G$ has VC-dimension less than $k$, then it has an $\e$-regular partition of size at most $\e^{-O(k^2)}$.  This was later strengthened to a bound of the form $k\e^{-k}$ by Alon, Fischer, and Newman \cite{Alon.2007}.  Similar bounds were obtained for weak regular partitions of hypergraphs of bounded VC-dimension by Fox, Pach and Suk in \cite{Fox.2017bfo}.  Related results were obtained with weaker polynomial bounds by Chernikov and Starchenko \cite{ChernikovStarchenko}. 

In this paper we prove an analogous theorem in the context of strong regularity for $3$-uniform hypergraphs, where VC-dimension is replaced by a higher arity analogue called $\VC_2$-dimension.

\begin{definition}
Suppose $H=(V,E)$ is a $3$-graph.  The \emph{$\VC_2$-dimension of $H$}, $\VC_2(H)$, is the largest integer $k$ so that there exist vertices $a_1,\ldots, a_k,b_1,\ldots, b_k\in V$ and $c_S\in V$ for each $S\subseteq [k]^2$, such that $a_ib_jc_S\in E$ if and only if $(i,j)\in S$.
\end{definition}

The notion of $\VC_2$-dimension was first introduced \cite{Shelah1} by Shelah, who also studied it in the context of groups \cite{Shelah2}.  It was later shown to have nice model theoretic characterizations by Chernikov-Palacin-Takeuchi \cite{Chernikov.2019},  to have further natural connections to groups and fields by Hempel and Chernikov-Hempel \cite{Hempel.2014, ChernikovHempel}, and to have applications in combinatorics by the author \cite{Terry.2018}.

Using infinitary techniques, Chernikov and Towsner  \cite{Chernikov.2020} proved a strong regularity lemma for $3$-uniform hypergraphs of bounded $\VC_2$-dimension without explicit bounds (in fact they proved results for $k$-uniform hypergraphs of bounded $\VC_{k-1}$-dimension).  Similar results were proved by the author and Wolf \cite{Terry.2021b} in the $3$-uniform case with Wowzer type bounds.  In this paper, we show that $3$-uniform hypergraphs of uniformly bounded $\VC_2$-dimension have regular decompositions with vastly improved bounds on the size of $\ell$; in particular, $\ell$ can be guaranteed to be polynomial in size,  rather than Wowzer.  We include the formal statement of our main theorem below, and refer the reader to the subsequent section for details on the definitions involved.

\begin{theorem}\label{thm:main}
For all $k\geq 1$, there are $\e_1^*>0$ and $\e^*_2:\mathbb{N}\rightarrow (0,1]$ such that the following holds.

Suppose $0<\e_1<\e_1^*$ and $\e_2:\mathbb{N}\rightarrow (0,1]$ satisfies $0<\e_2(x)<\e_2^*(x)$ for all $x\in \mathbb{N}$.  There is $T=T(\e_1,\e_2)$ such that every sufficiently large $3$-graph $H=(V,E)$ has a $\dev_{2,3}(\e_1,\e_2(\ell))$-regular $(t,\ell,\e_1,\e_2(\ell))$-decomposition with $ \ell\leq  \e_1^{-O_k(k)}$, and $t\leq T$.
\end{theorem}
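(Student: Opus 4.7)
The plan is to adapt the strategy of Alon--Fischer--Newman \cite{Alon.2007} and Fox--Pach--Suk \cite{Fox.2017bfo} to the higher-arity setting, with bounded $\VC_2$-dimension serving as the correct analogue of bounded VC-dimension. The starting point is to first invoke the standard strong regularity lemma (Theorem \ref{thm:reg1}) only to extract a vertex partition $\calP_1=\{V_1,\ldots,V_t\}$ with $t\leq T$ that is quasi-random for the shadow graph; we make no use of its tower-type bound on $\ell$, since we will construct $\calP_2$ ourselves from scratch. The task therefore reduces to building, for each pair $ij\in{[t]\choose 2}$, a partition $K_2[V_i,V_j]=P^1_{ij}\cup\cdots\cup P^{\ell}_{ij}$ of polynomial size in $\e_1^{-1}$ that, taken together, yields a $\dev_{2,3}(\e_1,\e_2(\ell))$-regular decomposition.

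The key reduction is to pass from $H$ to its two-dimensional \emph{link system}. For each triple of blocks $V_i,V_j,V_k$ and each $c\in V_k$, set $N_c=\{(a,b)\in V_i\times V_j : abc\in E\}$. The family $\calF_{ijk}=\{N_c : c\in V_k\}$ is a set system on $V_i\times V_j$, and bounded $\VC_2$-dimension of $H$ translates directly into a bound on the VC-dimension of $\calF_{ijk}$, because a shattered configuration of size $k$ in $\calF_{ijk}$ is precisely the configuration $\{a_i,b_j,c_S\}$ forbidden by the $\VC_2$-hypothesis. This is the precise sense in which $\VC_2$-dimension is the bipartite analogue of VC-dimension one level up.

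With this reduction in hand, apply an efficient partition result for set systems of bounded VC-dimension (a Haussler-type packing argument, as used in \cite{Alon.2007, Fox.2017bfo}). This produces a partition of $V_i\times V_j$ into $\e_1^{-O(k)}$ pieces, each of which is, for all but an $\e_1$-fraction of vertices $c\in V_k$, either almost entirely contained in $N_c$ or almost entirely disjoint from $N_c$. Taking the common refinement over all $k\in[t]$ and all ordered pairs $(i,j)$ introduces a factor polynomial in $t$, which gets absorbed into the constant $T$, leaving $\ell$ bounded by $\e_1^{-O_k(k)}$ as required. The resulting collection $\calP_2$ is our candidate decomposition.

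The main obstacle is verifying that this candidate is genuinely $\dev_{2,3}(\e_1,\e_2(\ell))$-regular, since $\dev_{2,3}$ is an octahedral-type quasirandomness condition rather than a mere edge-density condition. To handle this, the plan is to show that on any triple of cells $(P^{\alpha}_{ij},P^{\beta}_{ik},P^{\gamma}_{jk})$ obtained above, the octahedral count for $H$ must be close to its expectation; otherwise one could iterate the link construction once more and extract a $\VC_2$-shatter configuration of size larger than $k$, contradicting the hypothesis. The functions $\e_1^*$ and $\e_2^*:\mathbb{N}\to(0,1]$ must be chosen so that the approximation errors in the Haussler partition, in the octahedral deviation estimate, and in the transition from link-homogeneity to $\dev_{2,3}$-regularity, all compose without blowing up $\ell$ beyond polynomial. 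The delicate point is to thread $\e_2$ through this composition: since $\e_2$ is allowed to depend on $\ell$, and $\ell$ in turn depends on $\e_1$ and on the required regularity level, one prescribes $\e_2^*(x)$ small enough in $x$ that the link-partitioning step can be carried out at the finer scale without further refinement of $\calP_1$.
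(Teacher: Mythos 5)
The central reduction in your proposal is incorrect, and it is the crux of the whole argument. You claim that bounded $\VC_2$-dimension of $H$ translates directly into a bound on the VC-dimension of the link system $\calF_{ijk}=\{N_c: c\in V_k\}$ on $V_i\times V_j$, ``because a shattered configuration of size $k$ in $\calF_{ijk}$ is precisely the configuration forbidden by the $\VC_2$-hypothesis.'' This is not true. A shattered subset of $V_i\times V_j$ of size $k$ is a collection of $k$ arbitrary pairs $(a_1,b_1),\ldots,(a_k,b_k)$; it need not be (and generically is not) of the form $\{a_1,\ldots,a_m\}\times\{b_1,\ldots,b_m\}$. The hypothesis $\VC_2(H)<k$ only forbids shattering full $k\times k$ \emph{grids} of pairs. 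The implication goes the useful way only in the direction you do not need: $\VC_2(H)\geq k$ forces VC-dimension of $\calF_{ijk}$ at least $k^2$, but a large shattered non-grid set in $V_i\times V_j$ is perfectly compatible with $\VC_2(H)$ being small. Indeed, if bounded $\VC_2$-dimension forced bounded VC-dimension of the links, one could run the graph argument of Alon--Fischer--Newman on each pair of blocks and bring both $t$ and $\ell$ down to polynomial, contradicting the conjectured lower bound for $t$ which remains Wowzer even under $\VC_2$-boundedness (this is Problem 1.5 in the paper and is explicitly open).

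What the paper actually does, and what you are missing, is that the passage from $\VC_2$-dimension to a genuine ``no copy of $U(k)$'' condition happens only \emph{after} regularizing both $\calP_1$ and $\calP_2$, and it takes place at the level of cells of $\calP_2$, not at the level of vertex pairs. Specifically, one first applies the full-strength regularity lemma for $3$-graphs of bounded $\VC_2$-dimension (Theorem \ref{thm:vc2finite}) to produce a regular and $\mu$-homogeneous decomposition $\calP$ with a potentially Wowzer-sized $\ell$. Then, for each pair of blocks, one forms the auxiliary edge-colored bipartite graph on vertex set $\calP_{edge}\cup\calP_{cnr}$ from Definition \ref{def:reducedP}, with colors given by the triad densities. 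Proposition \ref{prop:suffvc2} (which itself rests on a counting-lemma argument: quasirandom bipartite cells contain many grids, so shattering at the cell level forces a shattered grid of actual vertices) guarantees this auxiliary graph has no $E_0/E_1$-copy of $U(k)$. Only then does Haussler's packing lemma (via Lemma \ref{lem:hausslercor}) apply, clustering the cells of $\calP_2$ into $O_k(\delta^{-k})$ groups. The new decomposition $\calQ$ is obtained by merging each group and using the Frankl--R\"odl slicing lemma (Lemma \ref{lem:3.8}) to re-equalize densities. Your plan to discard the regular $\calP_2$ and rebuild it from scratch by clustering vertex pairs skips exactly the step that makes the $\VC_2$-hypothesis usable. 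Separately, your remark that the common refinement over all $k\in[t]$ ``introduces a factor polynomial in $t$ absorbed into $T$'' does not help $\ell$: refining a partition of $K_2[V_i,V_j]$ for each of $t-2$ choices of $k$ would multiply the number of parts by $\e_1^{-O(k)}$ per refinement, making $\ell$ depend on $t$ (hence of Wowzer type), which defeats the purpose of the theorem.
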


The bound $T$ in Theorem \ref{thm:main} is generated from an application of Theorem \ref{thm:reg1}, and is also of Wowzer type (see Theorem \ref{thm:main2} for a more precise statement regarding this).  The regular partition in Theorem \ref{thm:main} has the additional property that the regular triads have edge densities near $0$ or $1$, which also occurs in the results from \cite{Chernikov.2020, Terry.2021b}.  The ingredients in the proof of Theorem \ref{thm:main} include the improved regularity lemma for $3$-graphs of bounded $\VC_2$-dimension from \cite{Terry.2021b}, a method of producing quotient graphs from regular partitions of $3$-graphs developed in \cite{Terry.2021b}, and ideas from \cite{Fox.2017bfo} for producing weak regular partitions of hypergraphs of bounded VC-dimension.  

The fact that the bound for $\ell$ can be brought all the way down to polynomial in Theorem \ref{thm:main} is somewhat surprising, given that the proof for arbitrary hypergraphs yields a Wowzer bound.  This raises the question of what the correct form of the bound is, in general, for $\ell$.  The author conjectures it is at least a tower function (i.e. $Ack_2$).

It was conjectured in \cite{Chernikov.2020} that the bound for $t$ can also be made sub-Wowzer under the assumption of bounded $\VC_2$-dimension, however, the author has been unable to prove this is the case.  This leaves the following open problem, which we state here.

\begin{problem}
Given a fixed integer $k\geq 1$, are there arbitrarily large $3$-uniform hypergraphs of $\VC_2$-dimension at most $k$, which require Wowzer type bounds for $T_0$ in the Theorem \ref{thm:reg1}?
\end{problem}

\section{Preliminaries}

In this section we cover the requisite preliminaries, including graph and hypergraph regularity (Subection \ref{ss:regularity}), VC and $\VC_2$-dimension (Subsections \ref{ss:vc}, \ref{ss:haussler}, and \ref{ss:vc2}), auxiliary graphs defined from regular decompositions of $3$-graphs (Subsection \ref{ss:aux}), and basic lemmas around regularity and counting (Subsection \ref{ss:counting}).  

\subsection{Notation}

We include here some basic notation needed for the other preliminary sections.  Given a set $V$ and $k\geq 1$, ${V\choose k}=\{X\subseteq V: |X|=k\}$.  A \emph{$k$-uniform hypergraph} is a pair $(V,E)$ where $E\subseteq {V\choose k}$.  For a $k$-uniform hypergraph $G$, $V(G)$ denotes the vertex set of $V$ and $E(G)$ denotes the edge set of $G$.  Throughout the paper, all vertex sets are assumed to be finite.

When $k=2$, we refer to a $k$-uniform hypergraph as simply a \emph{graph}.  When $k=3$, we will refer to a $k$-uniform hypergraph as a \emph{$3$-graph}.  

Given distinct elements $x,y$, we will write $xy$ for the set $\{x,y\}$.  Similarly, for distinct $x,y,z$, we will write $xyz$ for the set $\{x,y,z\}$.  Given sets $X,Y,Z$, we set 
\begin{align*}
K_2[X,Y]&=\{xy: x\in X, y\in Y, x\neq y\}\text{ and }\\
K_3[X,Y,Z]&=\{xyz: x\in X, y\in Y, z\in Z, x\neq y, y\neq z, x\neq z\}.
\end{align*}
If $G=(V,E)$ is a graph and $X,Y\subseteq V$ are disjoint, we let $G[X,Y]$ be the bipartite graph $(X\cup Y, E\cap K_2[X,Y])$.

Given a $k$-uniform hypergraph $G=(V,E)$, $1\leq i<k$, and $e\in {V\choose i}$, set 
$$
N_E(e)=\{e'\in {V\choose k-i}: e\cup e'\in E\}.
$$  

 A \emph{bipartite edge colored graph} is a tuple $G=(A\cup B,E_0,E_1,\ldots, E_i)$, where $i>1$ and $K_2[A,B]=E_0\sqcup E_1\sqcup \ldots \sqcup E_i$.  In this case, given $u\in \{0,1,\ldots, i\}$ and $x\in A\cup B$, we let $N_{E_u}(x)=\{y\in A\cup B: ab\in E_u\}$.
 
 Similarly, a \emph{tripartite edge colored $3$-graph} is a tuple $G=(A\cup B\cup C,E_0,E_1,\ldots, E_i)$, where $i>1$ and $K_3[A,B,C]=E_0\sqcup E_1\sqcup \ldots \sqcup E_i$.  In this case, given $u\in \{0,1,\ldots, i\}$ and $x,y\in V:=A\cup B\cup C$, we let $N_{E_u}(x)=\{uv\in {V\choose 2} : xuv\in E_u\}$ and $N_{E_u}(xy)=\{v\in V: xuv\in E_u\}$.
 
For two functions $f_1,f_2:\mathbb{N}\rightarrow (0,1]$, we write $f_1<f_2$ to denote that $f_1(x)<f_2(x)$ for all $x\in \mathbb{N}$.  For real numbers $r_1,r_2$ and $\e>0$, we write $r_1=r_2\pm \e$ to denote that $r_1\in (r_2-\e,r_2+\e)$.  Given a natural number $n\geq 1$, $[n]=\{1,\ldots, n\}$.  An equipartition of a set $V$ is a partition $V=V_1\cup \ldots \cup V_t$ such that for each $1\leq i,j\leq t$, $||V_i|-|V_j||\leq 1$.  

\subsection{Regularity}\label{ss:regularity}
In this section we define graph regularity, as well as a strong notion of regularity for $3$-graphs. We will state our definitions in terms of the quasi-randomness notion known as ``dev", which is one of three notions of quasirandomness which are now known to be equivalent, the other two being ``oct'' and ``disc.''  For more details on these and the equivalences, we refer the reader to \cite{Nagle.2013}.  

We begin a notion of quasirandomness for graphs.  
\begin{definition}\label{def:dev2}
Suppose $B=(U\cup W, E)$ is a bipartite graph, and $|E|=d_B|U||W|$.  We say $B$ \emph{has $\dev_2(\e,d)$} if $d_B=d\pm \e$ and 
$$
\sum_{u_0,u_1\in U}\sum_{w_0,w_1\in W}\prod_{i\in \{0,1\}}\prod_{j\in \{0,1\}}g(u_i,v_j)\leq \e |U|^2|V|^2,
$$
where $g(u,v)=1-d_B$ if $uv\in E$ and $g(u,v)=-d_B$ if $uv\notin E$. 
\end{definition}

We now define a generalization of Definition \ref{def:dev2} to $3$-graphs due to Gowers \cite{Gowers.20063gk}.  If  $G=(V,E)$ is a graph, let $K_3^{(2)}(G)$ denote the set of triples from $V$ forming a triangle in $G$, i.e.
$$
K_3^{(2)}(G):=\{xyz\in {V\choose 3}: xy, yz, xz\in E\}.
$$
Now given a $3$-graph $H=(V,R)$ on the same vertex set, we say that $G$ \emph{underlies $H$} if $R\subseteq K_3^{(2)}(G)$. 

 \begin{definition}\label{def:regtripdev}
Assume $\e_1,\e_2>0$, $H=(V,E)$ is a $3$-graph, $G=(U\cup W\cup Z,E)$ is a $3$-partite graph underlying $H$, and $|E|=d_3|K^{(2)}_3(G)|$.  We say that \emph{$(H,G)$ has $\dev_{2,3}(\e_1,\e_2)$} if there is $d_2\in (0,1)$ such that $G[U,W]$, $G[U,Z]$, and $G[W,Z]$ each have $\dev_2(\e_2,d_2)$, and 
$$
\sum_{u_0,u_1\in U}\sum_{w_0,w_1\in W}\sum_{z_0,z_1\in Z}\prod_{(i,j,k)\in \{0,1\}^3}h_{H,G}(u_i,w_j,z_k)\leq \e_1 d_2^{12}|U|^2|W|^2|Z|^2,
$$
where $h_{H,G}(x,y,z)=1-d_3$ if $xyz\in E\cap K_3^{(2)}(G)$, $h_{H,G}(x,y,z)=-d_3$ if $xyz\in K_3^{(2)}(G)\setminus E$, and $h_{H,G}(x,y,z)=0$ if $xyz\notin K_3^{(2)}(G)$.
\end{definition}

We now define a $(t,\ell)$-decomposition for a vertex set $V$, which partitions $V$, as well as pairs from $V$.

\begin{definition}\label{def:decomp}
Let $V$ be a vertex set and $t,\ell \in \mathbb{N}^{>0}$. A \emph{$(t,\ell)$-decomposition} $\calP$ for $V$ consists of a partition $\calP_1=\{V_1\cup \ldots \cup V_t\}$ of $V$, and for each $1\leq i\neq j\leq t$, a partition $K_2[V_i,V_j]=P^1_{ij}\cup \ldots \cup P^\ell_{ij}$.  We let $\calP_2=\{P_{ij}^{\alpha}: ij\in {[t]\choose 2}, \alpha\leq \ell\}$.
\end{definition}

A \emph{triad of $\calP$} is a $3$-partite graph of the form $G^{ijk}_{\alpha,\beta,\gamma}:=(V_i\cup V_j\cup V_k, P_{ij}^\alpha\cup P_{ik}^\beta\cup P_{jk}^\gamma)$, for some $ijk\in {[t]\choose 3}$ and $\alpha,\beta,\gamma\leq \ell$.  Let $\triads(\calP)$ denote the set of all triads of $\calP$, and observe that $\{K_3^{(2)}(G): G\in \triads(\calP)\}$ partitions the set of triples $xyz\in {V\choose 3}$ which are in distinct elements of $\calP_1$.  

For a $3$-graph $H=(V,R)$, a decomposition $\calP$ of $V$, and $G\in \triads(\calP)$, define $H|G:=(V(G), R\cap K_3^{(2)}(G))$.  Note that $G$ always underlies $H|G$.  

\begin{definition}
Given a $3$-graph $H=(V,R)$, a decomposition $\calP$ of $V$, and $G\in \triads(\calP)$, we say $G$ \emph{has $\dev_{2,3}(\e_1,\e_2)$ with respect to $H$} if $(H|G,G)$ has $\dev_{2,3}(\e_1,\e_2)$. 
\end{definition}

To define a regular decomposition for a $3$-graph, we need one more notion, namely that of an ``equitable'' decomposition. 

\begin{definition}
We say that $\calP$ is a \emph{$(t,\ell, \e_1,\e_2)$-decomposition} if $\calP_1=\{V_1,\ldots, V_t\}$ is an equipartition and for at least $(1-\e_1){|V|\choose 2}$ many $xy\in {V\choose 2}$, there is some $P_{ij}^{\alpha}\in \calP_2$ containing $xy$ such that $(V_i\cup V_j, P_{ij}^{\alpha})$ has $\dev_{2}(\e_2,1/\ell)$.
\end{definition}

\begin{definition}\label{def:regdec}
Suppose $H=(V,E)$ is a $3$-graph and $\calP$ is an $(t,\ell, \e_1,\e_2)$-decomposition of $V$.  We say that $\calP$ is \emph{$\dev_{2,3}(\e_1,\e_2)$-regular} for $H$ if for all but at most $\e_1 n^3$ many triples $xyz\in {V\choose 3}$, there is some $G\in \triads(\calP)$ with $xyz\in K_3^{(2)}(G)$ such that $G$ has $\dev_{2,3}(\e_1,\e_2)$ with respect to $H$.   
\end{definition}

 We can now restate the regularity lemma for $\dev_{2,3}$-quasirandomness.

\begin{theorem}\label{thm:reg2} For all $\e_1>0$, every function $\e_2:\mathbb{N}\rightarrow (0,1]$, and every $\ell_0,t_0\geq 1$, there exist positive integers $T_0=T_0(\e_1,\e_2,t_0,\ell_0)$ and $L_0=L_0(\e_1,\e_2,t_0,\ell_0)$, such that for every sufficiently large $3$-graph $H=(V,E)$, there exists a $\dev_{2,3}(\e_1,\e_2(\ell))$-regular, $(t,\ell,\e_1,\e_2(\ell))$-decomposition $\calP$ for $H$ with $t_0\leq t\leq T_0$ and $\ell_0\leq \ell \leq L_0$.
\end{theorem}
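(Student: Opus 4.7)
The plan is to deduce Theorem~\ref{thm:reg2} from Theorem~\ref{thm:reg1} by a post-refinement argument. Fix the target parameters $\e_1, \e_2, t_0, \ell_0$ as in the statement. Choose auxiliary parameters $\e_1' = \e_1/C_1$ and $\e_2'(x) = \e_2(\ell_0 x)/C_2(x)$, where $C_1$ and $C_2(x)$ are chosen large enough (in terms of $t_0, \ell_0$, and $\e_1, \e_2$) to absorb losses from the coming refinement. Apply Theorem~\ref{thm:reg1} with these shrunk parameters to obtain a $\dev_{2,3}(\e_1', \e_2'(\ell'))$-regular $(t', \ell', \e_1', \e_2'(\ell'))$-decomposition $\calP'$ with $t' \leq T_0'$ and $\ell' \leq L_0'$.

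Next, refine $\calP'$. If $t' < t_0$, uniformly subdivide each vertex class $V_i$ into $s := \lceil t_0/t' \rceil$ roughly equal pieces, chosen uniformly at random among equipartitions. If $\ell' < \ell_0$, partition each pair class $P_{ij}^\alpha$ into $m := \lceil \ell_0/\ell' \rceil$ sub-classes by independently assigning each edge to one of $m$ sub-classes uniformly at random. The resulting decomposition $\calP$ has $t := s t' \geq t_0$ vertex parts and $\ell := m \ell' \geq \ell_0$ pair classes per bipartite pair, yielding final bounds $T_0 := t_0 T_0'$ and $L_0 := \ell_0 L_0'$.

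The remaining task, which is the main obstacle, is to show that with positive probability over the random choices, $\calP$ is a $(t, \ell, \e_1, \e_2(\ell))$-decomposition that is $\dev_{2,3}(\e_1, \e_2(\ell))$-regular for $H$. This reduces to three inheritance facts: (i) restriction of a $\dev_2(\e_2', d)$-regular bipartite graph to random equi-sized sub-parts preserves $\dev_2(\e_2, d)$-regularity, since the defining dev-sum is a low-degree polynomial in random vertex-indicators and concentrates around its mean; (ii) uniform random edge-subdivision of a $\dev_2(\e_2', d)$-regular bipartite graph into $m$ sub-graphs produces $m$ graphs each $\dev_2(\e_2, d/m)$-regular, by Chernoff-type concentration of edge-indicator sums; (iii) the corresponding statement for $\dev_{2,3}$-regular triads, where the density $d_2$ of each refined color class drops to roughly $d_2/m$ (so the scaling factor $d_2^{12}$ in Definition~\ref{def:regtripdev} decreases by $m^{12}$), and the triangle density $d_3$ and the associated dev-sum concentrate around their means simultaneously. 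The constants $C_1$ and $C_2(x)$ must be tuned so that these three inheritance losses --- each involving powers of $s$, $m$, and the relevant densities --- compose to yield the target parameters $\e_1$ and $\e_2(\ell)$; once this tuning is carried out, a deterministic refinement of the required form exists by the probabilistic method.
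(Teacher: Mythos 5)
Theorem \ref{thm:reg2} is not proved in this paper: it is cited (from \cite{Gowers.20063gk} and \cite{Nagle.2013}, with the equitability of $\calP_2$ via techniques of \cite{Frankl.2002}), and in those sources the lower bounds $t_0\leq t$, $\ell_0\leq\ell$ are obtained not by post-refinement but by \emph{initializing} the density-increment iteration with an arbitrary $(t_0,\ell_0)$-decomposition and only ever refining it, so the lower bounds are automatic. Your plan to derive Theorem \ref{thm:reg2} from Theorem \ref{thm:reg1} by a probabilistic post-refinement is therefore a genuinely different route; it is plausible in outline, but as written it has real gaps.

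The most serious gap is your inheritance fact (iii). You must show that if $(H|G,G)$ has $\dev_{2,3}(\e_1',\e_2')$ and $G'$ is obtained from $G$ by restricting each bipartite part to one random $m$-coloring class (so the bipartite density drops from $d_2$ to roughly $d_2/m$), then $(H|G',G')$ still has $\dev_{2,3}(\e_1,\e_2(\ell))$. ``Chernoff-type concentration'' does not settle this: the quantity $\sum\prod_{(i,j,k)}h_{H|G',G'}(u_i,w_j,z_k)$ is a degree-$12$ polynomial in the independent color indicators, not a sum of independent terms, and moreover $h_{H|G',G'}$ itself depends on the random density $d_3(H|G')$ and the random set $K_3^{(2)}(G')$, so the quantity being controlled is not even a fixed polynomial. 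One would need a polynomial concentration inequality together with a separate argument that the \emph{mean} of the restricted dev-sum is already $\lesssim\e_1(d_2/m)^{12}|U|^2|W|^2|Z|^2$, and the latter is not an obvious consequence of $\dev_{2,3}$-regularity of $(H|G,G)$. Nothing in Subsection \ref{ss:counting} supplies such a lemma for general $3$-graphs; Proposition \ref{prop:homimpliesrandome} handles only triads whose relative $3$-graph density is near $0$ or $1$, which is precisely the extra structure exploited in the paper's main argument and is unavailable here.

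A second gap is that the reduction ``to three inheritance facts'' is incomplete. After splitting $V_i$ into $V_i^1,\ldots,V_i^s$, the pairs in $K_2[V_i^a,V_i^b]$ and the triads $G^{abc}_{\ast,\ast,\ast}$ whose three refined parts lie inside one or two original classes are not restrictions of anything in $\calP'$, so no inheritance statement applies to them. You need to (a) construct fresh $\dev_2(\e_2(\ell),1/\ell)$-regular equitable partitions of each $K_2[V_i^a,V_i^b]$ (Lemma \ref{lem:3.8} does this), and (b) argue that the uncontrolled triads created in this way contain $\ll\e_1 n^3$ triples; the latter uses that the $(t',\ell',\e_1',\e_2')$-decomposition property already forces $t'\geq 1/\e_1'$, hence triples with two vertices in one original $V_i$ number $O(\e_1')n^3$. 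This bookkeeping, and the tuning of $C_1$ against it, is absent from your write-up. Finally, a smaller parameter issue: $\ell=m\ell'$ with $m=\lceil\ell_0/\ell'\rceil$ lies in $[\ell_0,\ell_0+\ell'-1]$, not at $\ell_0\ell'$, so $\e_2'(x)=\e_2(\ell_0 x)/C_2(x)$ does not in general align with the target $\e_2(\ell)$; one should instead take a minimum of $\e_2$ over the range of possible final $\ell$.
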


This theorem was first proved in a slightly different form by Gowers in \cite{Gowers.20063gk}.  In particular, in \cite{Gowers.20063gk}, the partition of the pairs, $\calP_2$, is not required to be equitable as it is in Theorem \ref{thm:reg2}.  Theorem \ref{thm:reg2} as stated appears in \cite{Nagle.2013}, where it is pointed out that the additional equitability requirement can obtained using techniques from \cite{Frankl.2002}.

\vspace{2mm}

\subsection{$\VC$-dimension}\label{ss:vc} In this subsection we give some preliminaries around VC and $\VC_2$-dimension.  We begin by defining VC-dimension.

Given a set $V$, $\calF\subseteq \calP(V)$, and $X\subseteq V$, let $|X\cap \calF|:=\{F\cap X: F\in \calF\}$.  We say that $X$ is \emph{shattered by $\calF$} if $|\calF\cap X|=2^{|X|}$.  The \emph{VC-dimension of $\calF$} is then defined to be the size of the largest subset of $V$ which is shattered by $\calF$.  

For a graph $G=(V,E)$, the \emph{VC-dimension of $G$} is the VC-dimension of the set system $\{N_E(x): x\in V\}\subseteq \calP(V)$.  We now give a simple recharacterization of this.  Given $k\geq 1$, let $A_k=\{a_i: i\in [k]\}$, and $C_{\calP([k])}=\{c_S:S\subseteq [k]\}$.

\begin{definition}
Suppose $k\geq 1$.  Define $U(k)$ to be the bipartite graph $(A_k\cup C_{\calP([k])}, E)$ where $E=\{a_ic_S: i\in S\}$.
\end{definition}

Then it is well known that a graph $G$ has VC-dimension at least $k$ if and only if there is a map $f:V(U(k))\rightarrow V(G)$ so that for all $a\in A_k$ and $c\in C_{\calP([k])}$, $ab\in E(U(k))$ if and only if $f(a)f(b)\in E(G)$. 

\subsection{Encodings}\label{ss:aux}
In this subsection, we define an auxiliary edge-colored graph associated to a regular decomposition of a $3$-graph.   We will then state a result from \cite{Terry.2021b} which shows that encodings of $U(k)$ cannot occur when the auxiliary edge-colored graph arises from a regular decomposition of a $3$-graph with $\VC_2$-dimension less than $k$.

\begin{definition}
Suppose $\e_1,\e_2>0$, $\ell, t\geq 1$, $V$ is a set, and $\calP$ is a $(t,\ell, \e_1,\e_2)$-decomposition for $V$ consisting of $\calP_1=\{V_i: i\in [t]\}$ and $\calP_2=\{P_{ij}^{\alpha}: ij\in {[t]\choose 2}, \alpha\leq \ell\}$.    Define
\begin{align*}
\calP_{cnr}&=\{P_{ij}^\alpha P_{ik}^{\beta}: ijk\in {[t]\choose 3}, \alpha,\beta\leq \ell, \text{ and }P_{ij}^\alpha, P_{ij}^{\beta}\text{ satisfy }\dev_2(\e_2,1/\ell)\}\\
\calP_{edge}&=\{P_{ij}^\alpha \in \calP_2: P_{ij}^\alpha\text{ satisfies }\dev_2(\e_2,1/\ell)\}.
\end{align*}
\end{definition}

In the above, cnr stands for ``corner.''  Observe that for each $P_{ij}^{\alpha}\in \calP_{edge}$ and $P_{uv}^{\beta}P_{uw}^{\gamma}\in \calP_{cnr}$, if $\{v,w\}=\{i,j\}$, then the pair $(P_{ij}^{\alpha}, P_{uv}^{\beta}P_{uw}^{\gamma})$ corresponds to a triad from $\calP$, namely $G_{ijs}^{uvw}$.

\begin{definition}\label{def:reducedP}
Suppose $\e_1,\e_2,\delta>0$, $\ell,t\geq 1$, $H=(V,E)$ is a $3$-graph, and $\calP$ is a $(t,\ell,\e_1,\e_2)$-decomposition for $V$. Define 
\begin{align*}
\mathbf{E}_0(\delta)&=\{P_{ij}^{\alpha}(P_{jk}^{\beta}P_{ik}^{\gamma})\in K_2[\calP_{edge}, \calP_{cnr}]: |E\cap K_3^{(2)}(G_{ijk}^{\alpha\beta\gamma})|\leq \delta|K_3^{(2)}(G_{ijk}^{\alpha\beta\gamma})|\},\\
\mathbf{E}_1(\delta)&=\{P_{ij}^{\alpha}(P_{jk}^{\beta}P_{ik}^{\gamma})\in K_2[\calP_{edge}, \calP_{cnr}]: |E\cap K_3^{(2)}(G_{ijk}^{\alpha\beta\gamma})|\geq (1-\delta)|K_3^{(2)}(G_{ijk}^{\alpha\beta\gamma})|\}\text{ and }\\
\mathbf{E}_2(\delta)&=K_2[\calP_{edge}, \calP_{cnr}]\setminus (\mathbf{E}_1(\delta)\cup \mathbf{E}_0(\delta)).
\end{align*}
\end{definition}

Note that Definition \ref{def:reducedP} gives us a natural bipartite edge colored graph with vertex set $\calP_{edge}\cup \calP_{cnr}$ and edge sets given by $\mathbf{E}_0(\delta), \mathbf{E}_1(\delta),\mathbf{E}_2(\delta)$.  The author and Wolf showed in \cite{Terry.2021b} that these auxiliary edge-colored graphs are useful for understanding $3$-graphs of bounded $\VC_2$-dimension.  To explain why, we require the following notion of an ``encoding.''

\begin{definition}\label{def:encodingR}
Let $\e_1,\e_2,\delta>0$ and $t,\ell\geq 1$.  Suppose $R=(A\cup B,E_R)$ is a bipartite graph,  $H=(V,E)$ is a $3$-graph, and $\calP$ is a $(t,\ell,\e_1,\e_2)$-decomposition of $V$.  An \emph{$(A,B,\delta)$-encoding of $R$ in $(H,\calP)$} consists of a pair of functions $(g,f)$, where $g:A\rightarrow \calP_{cnr}$ and $f:B\rightarrow \calP_{edge}$ are such that the following hold for some $j_0k_0\in {[t]\choose 2}$.
 \begin{enumerate}
 \item $\mathrm{Im}(f)\subseteq \{P_{j_0k_0}^{\alpha}: \alpha\leq \ell\}$, and $Im(g)\subseteq \{P^{\beta}_{ij_0}P^\gamma_{ik_0}: i\in [t], \beta,\gamma\leq \ell\}$, and
 \item For all $a\in A$ and $b\in B$, if $ab\in E_R$, then $g(a)f(b)\in \mathbf{E}_1(\delta)$, and if $ab\notin E_R$, then $g(a)f(b)\in \mathbf{E}_0(\delta)$.
 
 \end{enumerate}
 \end{definition}
 
A \emph{$\delta$-encoding of $U(k)$} will always mean an $(A_k,C_{\calP([k])},\delta)$-encoding of $U(k)$.  In \cite{Terry.2021b}, we proved the following proposition connecting encodings of $U(k)$ and $\VC_2$-dimension (see Proposition 5.6  in \cite{Terry.2021b}).

\begin{proposition}\label{prop:suffvc2}
For all $k\geq 1$ and $\delta\in (0,1/2)$, there are $\e_1>0$ and $\e_2:\mathbb{N}\rightarrow (0,1]$ such that for all $t,\ell\geq 1$, there is $N$ such that the following hold.  Suppose $H=(V,E)$ is a $3$-graph with $|V|\geq N$, and $\calP$ is a $\dev_{2,3}(\e_2(\ell),\e_1)$-regular $(t,\ell, \e_1,\e_2(\ell))$-decomposition of $V$. If there exists a $\delta$-encoding of $U(k)$ in $(H,\calP)$, then $H$ has $k$-$\IP_2$. 

Moreover, there is a constant $C=C(k)$ so that $\e_1=\delta^C$.
\end{proposition}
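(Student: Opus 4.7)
The plan is to extract a $k$-$\IP_2$ witness in $H$ directly from the data of the encoding. The encoding singles out $k$ corners $g(a_i)=P^{\beta_i}_{i_a(i)j_0}P^{\gamma_i}_{i_a(i)k_0}$ and $2^k$ edge-parts $f(c_S)=P^{\alpha_S}_{j_0k_0}$, such that the $H$-density of the triad $G^{i_a(i)j_0k_0}_{\beta_i\gamma_i\alpha_S}$ lies in $[1-\delta,1]$ when $i\in S$ and in $[0,\delta]$ when $i\notin S$. The joint $\dev_2$- and $\dev_{2,3}$-quasirandomness of the decomposition allows us to treat the neighborhoods $N_{P^{\beta_i}}(x)$, $N_{P^{\gamma_i}}(x)$, and $N_{P^{\alpha_S}}(y)$ essentially as independent random subsets of density $1/\ell$, so the intersection sizes we need can be computed by multiplication, up to a quasirandomness error polynomial in $\e_1$ and $\e_2(\ell)$.

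First I would pick the ``$a$'' side. For each $i\in[k]$ and $S\subseteq[k]$, $\dev_{2,3}$-regularity of the triad $G^{i_a(i)j_0k_0}_{\beta_i\gamma_i\alpha_S}$ forces all but a small fraction of $x\in V_{i_a(i)}$ to be slice-typical for that $S$, meaning the bipartite slice $\{yz:xyz\in E(H)\}$ has density on the triangles of the triad matching the triad's $H$-density up to $O(\delta)$. A union bound over the $2^k$ choices of $S$ leaves a positive fraction of $V_{i_a(i)}$ slice-typical for all $S$ simultaneously, and I pick $x_i$ from there. Next I would pick the ``$b$'' side: by $\dev_2$ of the $P^{\beta_i}$'s applied iteratively, the intersection $\bigcap_{i\in[k]}N_{P^{\beta_i}_{i_a(i)j_0}}(x_i)$ has size $(1\pm o(1))|V_{j_0}|\ell^{-k}$, and I select from it $k$ distinct vertices $y_j$ that are additionally typical with respect to every edge part $P^{\alpha_S}_{j_0k_0}$.

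The main step is choosing $c_T=z_T$ for each $T\subseteq[k]^2$. Setting $S_j(T)=\{i:(i,j)\in T\}$, I seek $z_T\in V_{k_0}$ satisfying (i) $x_iz_T\in P^{\gamma_i}_{i_a(i)k_0}$ for all $i$, (ii) $y_jz_T\in P^{\alpha_{S_j(T)}}_{j_0k_0}$ for all $j$, and (iii) $x_iy_jz_T\in E(H)$ iff $(i,j)\in T$. Quasirandomness of the $P^{\gamma_i}$'s gives $(1\pm o(1))\ell^{-k}$ for (i); an essentially independent factor $(1\pm o(1))\ell^{-k}$ comes from the $P^{\alpha_S}$'s for (ii); so the candidate set has density $\approx\ell^{-2k}$ in $V_{k_0}$. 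On this candidate set each triple $x_iy_jz_T$ lies in the triad $G^{i_a(i)j_0k_0}_{\beta_i\gamma_i\alpha_{S_j(T)}}$, of $H$-density near $\chi_T(i,j)$, and the typicality conditions on $x_i$ and $y_j$ transport the triad density to the individual triple, so (iii) fails on only an $O_k(\delta)$-fraction of the candidate set. Hence valid $z_T$'s exist whenever $|V|$ is large enough; moreover for distinct $T,T'$ there is some $j$ with $S_j(T)\neq S_j(T')$, so $y_jz_T$ and $y_jz_{T'}$ lie in distinct edge parts, forcing $z_T\neq z_{T'}$. The resulting tuple $(x_i,y_j,z_T)_{i,j\in[k],\,T\subseteq[k]^2}$ is the required witness.

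The main obstacle is keeping track of the cumulative quasirandomness errors while maintaining the polynomial relation $\e_1=\delta^{C(k)}$ demanded by the moreover clause. Each intersection of $k$ neighborhoods and each passage from triad density to slice density costs a factor polynomial in $\e_1$ and $\e_2(\ell)$ that must remain small compared to the target density $\ell^{-O(k)}$; absorbing these losses through the union bounds over the $k$ apex vertices, the $2^k$ edge parts, the $k$ secondary vertices, and the $2^{k^2}$ targets dictates how $\e_2(\ell)$ should be chosen (polynomially small in $\ell^{-1}$, with exponent depending on $k$) and fixes the constant $C=C(k)$ in $\e_1=\delta^{C(k)}$.
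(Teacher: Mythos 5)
The paper does not actually contain a proof of this proposition: the text immediately following it states that it is proved in \cite{Terry.2021b} (Proposition 5.6), and that the quantitative form of $\e_1$ in the ``Moreover'' clause comes from tracing bounds through a counting lemma for $3$-graphs. So the comparison must be against the approach the paper signals, not against an explicit argument in this source.

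Your high-level strategy --- reading the corners $P^{\beta_i}_{i_a(i)j_0}P^{\gamma_i}_{i_a(i)k_0}$ and edge-parts $P^{\alpha_S}_{j_0k_0}$ off the encoding, then embedding the shattering witness as $(x_i,y_j,z_T)$ so that each triple $x_iy_jz_T$ lands in the triad $G^{i_a(i)j_0k_0}_{\beta_i,\gamma_i,\alpha_{S_j(T)}}$ with $S_j(T)=\{i:(i,j)\in T\}$ --- is the right one, and the distinctness observations at the end are correct.

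The genuine gap is in the pivotal sentence ``the typicality conditions on $x_i$ and $y_j$ transport the triad density to the individual triple, so (iii) fails on only an $O_k(\delta)$-fraction of the candidate set.'' Two problems. First, the candidate set $C_T\subseteq V_{k_0}$ has density about $\ell^{-2k}$, whereas your ``slice-typicality'' of $x_i$ controls the $H$-density only on the full triad link of $x_iy_j$, a set of density about $\ell^{-2}$. The set $C_T$ is an exponentially thinner slice of that link, and nothing in single-vertex typicality prevents the bad $\delta$-fraction of the link from concentrating inside $C_T$. Making this step rigorous requires either a hereditary/slicing lemma for $\dev_{2,3}$ (a $\dev_{2,3}$-regular triad stays regular after intersecting its underlying bipartite graphs with $\dev_2$-regular cuts) or a direct counting-lemma calculation for the full $(k+k+2^{k^2})$-vertex configuration. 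This is precisely the tool the paper says the original proof uses, and it is absent from your proposal. Second, an ``$O_k(\delta)$-fraction'' bound reads as a union bound over the $k^2$ pairs $(i,j)$, which is vacuous unless $\delta\ll k^{-2}$; but the paper itself invokes the proposition with $\delta=1/4$, so for $k\geq 2$ that calculation gives nothing. What the counting lemma actually provides is a multiplicative lower bound of roughly $(1-\delta)^{k^2}-O(\e_1)$ for the fraction of valid $z_T$ in $C_T$, which is positive for every $\delta<1$ once $\e_1$ is chosen small; you need that, not an additive error $O_k(\delta)$. The same issue appears, more mildly, in your earlier density claims (e.g.\ ``$\bigcap_i N_{P^{\beta_i}}(x_i)$ has size $(1\pm o(1))|V_{j_0}|\ell^{-k}$''), each of which also needs a piece of the embedding machinery rather than pure heuristics about independence.
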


We remark here that Proposition \ref{prop:suffvc2} is actually proved in \cite{Terry.2021b} for an equivalent notion of quasirandomness called $\disc_{2,3}$, and without the final ``Moreover'' statement regarding the quantitative form for $\e_1$ (see Proposition 5.6 in \cite{Terry.2021b}).  Tracing the bounds in the proof of Proposition 5.6 in \cite{Terry.2021b}, one finds that $\e_1$ has the form $\mu=\mu(\e_1,k)$, where $\mu$ comes from a version of the counting lemma (see Theorem 3.1 in \cite{Terry.2021b}).  An explicit value for this $\mu$ is unclear, as the proof of the counting lemma for $\disc_{2,3}$ passes through its equivalence with $\oct_{2,3}$, and then the counting lemma for $\oct_{2,3}$.  The author has not found proofs of these results in the literature which are explicit in the parameters (see Corollary 2.3 in \cite{Nagle.2013}).  It seems that one could produce such an explicit result from \cite{Nagle.2013} and \cite{Haxell} with some effort, however, we have instead chosen to side-step the issue by working the quasirandomenss notion $\dev$, rather than $\disc$.

In particular, all the ingredients used to prove Proposition 5.6 of \cite{Terry.2021b} have well known analogues for $\dev$.  By running the same arguments as in \cite{Terry.2021b} using $\dev$ rather than $\disc$, one obtains Proposition \ref{prop:suffvc2} as stated.  The additional ``Moreover'' statement about the explicit form for $\e_1$ then arises from the fact that there is a proof of the counting lemma for $\dev_{2,3}$ which is explicit in the parameters (see \cite{Gowers.20063gk}, Theorem 6.8).

\subsection{Haussler's Packing Lemma}\label{ss:haussler}  We will be applying techniques for proving improved regularity lemmas for graphs and hypergraphs of bounded VC-dimension to the edge-colored auxiliary graphs defined in the previous subsection.  In particular, we will use ideas from the proof of Theorem 1.3 in \cite{Fox.2017bfo}. We begin by describing the relevant result from VC-theory, namely Haussler's packing lemma.  

Suppose $V$ is a set and $\calF\subseteq V$.  We say that a subset $\calX\subseteq \calF$ is \emph{$\delta$-separated} if for all distinct $X,X'\in \calX$, $|X\Delta X|>\delta$.  The following packing lemma, due to Haussler, shows that if $\calF$ has bounded VC-dimension, the size a of a $\delta$-separated family cannot be too large \cite{Haussler}.

\begin{theorem}[Haussler's Packing Lemma]\label{thm:haussler}
Suppose $\calF\subseteq \calP(V)$, where $|V|=n$ and $\calF$ has VC-dimension at most $k$.  Then the maximal size of a $\delta$-separated subcollection of $\calF$ is at most $c_1(n/\delta)^k$, for some constant $c_1=c_1(k)$. 
\end{theorem}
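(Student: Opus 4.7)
The plan is to follow Haussler's classical random-sampling strategy combined with the Sauer-Shelah lemma. Let $\calX = \{X_1, \ldots, X_m\}$ be a $\delta$-separated subfamily of $\calF$; the aim is to bound $m$ by $c_1(n/\delta)^k$. First I would draw a uniformly random $S \subseteq V$ of size $s$, for a parameter $s$ of order $n/\delta$, and for each $X \in \calX$ set $\tau(X) := X \cap S$. Because $\calF$ has VC-dimension at most $k$, Sauer-Shelah bounds the number of distinct traces $\{F \cap S : F \in \calF\}$ by $\sum_{i=0}^{k}\binom{s}{i} \leq (es/k)^k = O(s^k)$. By convexity, distributing the $m$ sets of $\calX$ among at most $O(s^k)$ traces forces the expected number of colliding pairs $i<j$ with $\tau(X_i) = \tau(X_j)$ to be at least $\Omega(m^2/s^k) - O(m)$.

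Second, I would upper bound the expected collisions via the $\delta$-separation hypothesis. For fixed $i \neq j$, the event $\tau(X_i) = \tau(X_j)$ requires $S$ to avoid the symmetric difference $X_i \Delta X_j$, which has size greater than $\delta$; this happens with probability at most $\binom{n-\delta}{s}/\binom{n}{s} \leq (1 - \delta/n)^s \leq e^{-s\delta/n}$. Summing over pairs gives an expected collision count of at most $\binom{m}{2} e^{-s\delta/n}$. Combining this with the convexity lower bound from the previous paragraph and choosing $s$ of order $n/\delta$ so that $e^{-s\delta/n}$ is a small constant yields $m = O((n/\delta)^k)$, which is the claimed bound for some $c_1 = c_1(k)$.

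The main obstacle is that a direct balancing of the two bounds produces a spurious $\log(n/\delta)$ factor, because forcing $e^{-s\delta/n}\ll s^{-k}$ in the comparison requires $s$ to exceed $n/\delta$ by a logarithmic amount. Eliminating this logarithm is the technical heart of Haussler's original argument \cite{Haussler}, which achieves the sharp polynomial bound by iteratively refining the sample and restricting to sub-families of $\calX$ whose traces on $S$ are all distinct. Since the sharp form is by now standard in VC-theory and is what the subsequent regularity arguments require, in the present paper I would simply quote Haussler's theorem from \cite{Haussler} rather than redo the finer analysis here.
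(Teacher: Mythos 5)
The paper states Theorem \ref{thm:haussler} as a known result, citing \cite{Haussler} without proof, and your proposal ultimately reaches the same conclusion: after correctly sketching the random-sampling/Sauer--Shelah heuristic and correctly identifying that a naive balancing loses a logarithmic factor, you recommend simply quoting Haussler's theorem from \cite{Haussler}. This matches the paper's treatment exactly.
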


We will apply Theorem \ref{thm:haussler} in the setting of edge colored graphs.  This technique is inspired by the  proof of Theorem 1.3 in \cite{Fox.2017bfo}.

Suppose $G=(A\cup B,E_0,E_1,E_2)$ is a bipartite edge colored graph.  We say that $G$ has a $E_0/E_1$-copy of $U(k)$ if there are $v_1,\ldots, v_k\in A$ and for each $S\subseteq [k]$ a vertex $w_S\in B$ such that $i\in S$ implies $v_iw_S\in E_1$ and $i\notin S$ implies $v_iw_S\in E_0$.  Given $a,a'\in A$ and $\delta>0$, write $a\sim_{\delta}a'$ if for each $u\in \{0,1,2\}$, $|N_{E_u}(a)\Delta N_{E_u}(a')|\leq \delta |B|$.  Our main application of Theorem \ref{thm:haussler} is the following lemma.

\begin{lemma}\label{lem:hausslercor}
Suppose $k\geq 1$ and $c_1=c_1(k)$ is as in \ref{thm:haussler}.  Suppose $d\geq 1$ and $\delta,\e>0$ satisfy $\e\leq c_1^{-2}(\delta/8)^{2k+2}$.    Assume $G=(A\cup B, E_0,E_1,E_2)$ is a bipartite edge-colored graph, and assume there is no $E_0/E_1$-copy of $U(k)$ in $G$, and that $|E_2|\leq \e |A||B|$.  

Then there is an integer $m\leq 2c_1(\delta/8)^{-k}$, vertices $x_1,\ldots, x_m\in A$, and a set $U\subseteq A$ with $|U|\leq \sqrt{\e} |A|$,  so that for all $a\in A\setminus U$, $|N_{E_2}(a)|\leq \sqrt{\e}|B|$ and there is some $1\leq i\leq m$ so that $a\sim_{\delta} x_i$. 
\end{lemma}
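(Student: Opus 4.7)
My plan is to produce the set $U$ and the vertices $x_i$ greedily, and then bound $m$ by applying Haussler's packing lemma to a set system on $B$. First, let $U=\{a\in A:|N_{E_2}(a)|>\sqrt{\e}|B|\}$; double counting with $|E_2|\leq\e|A||B|$ gives $|U|\leq\sqrt{\e}|A|$, and every $a\in A\setminus U$ automatically satisfies $|N_{E_2}(a)|\leq\sqrt{\e}|B|$. Then take $\{x_1,\dots,x_m\}\subseteq A\setminus U$ to be a maximal subfamily of pairwise non-$\sim_\delta$-related vertices, so by maximality every $a\in A\setminus U$ is $\sim_\delta$-related to some $x_i$. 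It remains to bound $m\leq 2c_1(\delta/8)^{-k}$.

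To bound $m$, consider the set system $\calF=\{N_{E_1}(x_i):i\in[m]\}\subseteq\calP(B)$. The hypothesis on $\e$ forces $\sqrt{\e}\leq\delta/8$, so the relation $x_i\not\sim_\delta x_j$ cannot be witnessed by color $2$ (since $|N_{E_2}(x_i)\triangle N_{E_2}(x_j)|\leq 2\sqrt{\e}|B|<\delta|B|/4$), and the case of color $0$ can be converted into color $1$ via the containment $N_{E_0}(x_i)\triangle N_{E_0}(x_j)\subseteq (N_{E_1}(x_i)\triangle N_{E_1}(x_j))\cup N_{E_2}(x_i)\cup N_{E_2}(x_j)$. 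This yields $|N_{E_1}(x_i)\triangle N_{E_1}(x_j)|\geq(\delta/4)|B|$ for all $i\neq j$, so $\calF$ is a $(\delta/4)|B|$-separated family on $B$ of cardinality exactly $m$.

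The crux is to establish $\VC(\calF)\leq k$. Suppose for contradiction that $\calF$ has a shattered set $X\subseteq B$ of size $k+1$ with witnesses $a_S\in\{x_i\}$ for each $S\subseteq X$. Since each $a_S$ has at most $\sqrt{\e}|B|$ many $E_2$-neighbors, and the assumption $\e\leq c_1^{-2}(\delta/8)^{2k+2}$ forces $\sqrt{\e}$ to be dramatically smaller than $(\delta/8)^{k+1}$, a cleaning/selection argument exploiting the abundance of $E_2$-free incidences between the $2^{k+1}$ witnesses and the $k+1$ shattered points extracts a $k$-subset $X'\subseteq X$ and witnesses $a_T$ for $T\subseteq X'$ with no $E_2$-edges to $X'$, producing a genuine $E_0/E_1$-copy of $U(k)$ and contradicting the hypothesis. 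Hence $\VC(\calF)\leq k$, and Theorem~\ref{thm:haussler} applied to the $(\delta/4)|B|$-separated family $\calF$ gives
\[
m=|\calF|\leq c_1\Bigl(\tfrac{|B|}{(\delta/4)|B|}\Bigr)^{\!k}=c_1(4/\delta)^k\leq 2c_1(\delta/8)^{-k},
\]
as required.

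The main obstacle is the VC-dimension bound in the last step: the hypothesis forbids $E_0/E_1$-copies of $U(k)$ only in the \emph{oriented} form with $v$'s in $A$ and $w$'s in $B$, whereas a primal shattering of $\calF$ naturally produces witnesses in the opposite orientation. Navigating this orientation issue while simultaneously cleaning away the $E_2$-contamination is the technically delicate part, and it is precisely this cleaning argument that forces the quantitative dependence $\e\leq c_1^{-2}(\delta/8)^{2k+2}$ stated in the hypothesis.
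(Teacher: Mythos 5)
Your overall skeleton matches the paper's: define $U$ by a degree cutoff, choose representatives greedily, and bound $m$ by Haussler's packing lemma applied to a system of $E_1$-neighborhoods. Your separation argument is fine: for $x_i\not\sim_\delta x_j$, the witnessing color cannot be $2$ (since $2\sqrt{\e}<\delta$), and a color-$0$ witness converts to a color-$1$ lower bound via the partition $B=N_{E_0}(x)\sqcup N_{E_1}(x)\sqcup N_{E_2}(x)$, giving $(\delta/4)|B|$-separation of $\calF$.

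The genuine gap is the ``cleaning'' step. You apply Haussler on the full ground set $B$, obtain a shattered set $X\subseteq B$ of size $k+1$ with witnesses $a_S$, and then try to extract a $k$-subset $X'$ together with witnesses whose $E_2$-neighborhoods miss $X'$. This does not follow from the degree bound $|N_{E_2}(a_S)|\leq \sqrt{\e}|B|$: that is a density statement about a set of size proportional to $|B|$, and says nothing about the $(k+1)\cdot 2^{k+1}$ specific incidences between $X$ and the witnesses. Shattering only forces $N_{E_2}(a_S)\cap X\subseteq X\setminus S$ (because $S\subseteq N_{E_1}(a_S)$), so it is entirely consistent that $N_{E_2}(a_S)\cap X=X\setminus S$ for every $S$; in that situation, for every size-$k$ subset $X'\subseteq X$ and every proper $T\subsetneq X'$, both candidate witnesses for $T$ (namely $a_T$ and $a_{T\cup(X\setminus X')}$) have $E_2$-edges into $X'$. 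So the claimed ``abundance of $E_2$-free incidences'' does not exist, and no selection can succeed. The paper does the cleaning \emph{before} invoking Haussler: it sets $B'=B\setminus\bigcup_{i=1}^m N_{E_2}(x_i)$ and works with $\calF=\{N_{E_1}(x_i)\cap B':i\in[m]\}$. The hypothesis on $\e$ is precisely what guarantees $|B\setminus B'|\leq m\sqrt{\e}|B|\leq(\delta/4)|B|$, so the $(\delta/2)|B|$-separation of the $N_{E_1}(x_i)$ survives as $(\delta/4)|B|$-separation of their traces on $B'$; and since any shattered set now lies inside $B'$, each non-$E_1$ incidence to a witness is automatically $E_0$. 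That pre-restriction is the step you need; the post-hoc cleaning cannot be made to work.

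Your closing remark about orientation (shattering $\calF$ produces $k$ vertices in $B$ and $2^k$ in $A$, opposite to the order in the stated definition of an $E_0/E_1$-copy of $U(k)$) is a fair reading of the definition, but it is equally a feature of the paper's own proof and of its intended application, where the shattered set lives on the ``corner'' side; it is a looseness in how the hypothesis is phrased rather than an obstacle specific to your argument.
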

\begin{proof}
Let $U=\{v\in A: |N_{E_2}(v)|\geq \sqrt{\e}|B|\}$.  Since $|E_2|\leq \e|A||B|$, we know that $|U|\leq \sqrt{\e}|A|$.  Let $A'=A\setminus U$.  Let $m$ be maximal such that there exist $x_1,\ldots, x_m\in A'$, so that $\{N_{E_1}(x_i): i\in [m]\}$ is a $\delta/2$-separated family of sets on $B$.  We show $m\leq 2c_1(\delta/8)^{-k}$.

Suppose towards a contradiction that $m\geq \lceil 2c_1(\delta/8)^{-k}\rceil$. Let $B'=B\setminus (\bigcup_{i=1}^m E_2(x_i))$, and let $\calF:=\{N_{E_1}(x_i)\cap B': i\in [m]\}$.  Notice $|B\setminus B'|\leq m\sqrt{\e}|B|$.  We claim that $\calF$ is $\delta/4$-separated.  Consider $1\leq i\neq j\leq m$.  Then we know that 
\begin{align*}
|N_{E_1}(x_i)\Delta N_{E_1}(x_j)\cap B'|&\geq |N_{E_1}(x_i)\Delta N_{E_1}(x_j)|-m\sqrt{\e}|B|\\
&\geq |B|(\delta/2-m\sqrt{\e})\\
&\geq |B|\delta/4,
\end{align*}
where the last inequality is by our assumptions on $\delta,\e$. By Theorem \ref{thm:haussler}, $\calF$ shatters a set of size $k$.  By construction, for each $1\leq i\leq m$, $B'\setminus N_{E_1}(x_i)\subseteq N_{E_0}(x_i)$.  Consequently, we must have that there exists an $E_0/E_1$-copy of $U(k)$ in $G$, a contradiction.

Thus, $m\leq 2c_1(\delta/8)^{-k}$.    For all $a\in A\setminus U$, we know that $|N_{E_2}(a)|\leq \sqrt{\e}|B|$, and there is some $1\leq i\leq m$ so that $|N_{E_1}(a)\Delta N_{E_1}(x_i)|\leq \delta|B|/2$. We claim that $a\sim_{\delta}x_i$.  We already know that $|N_{E_1}(a)\cap N_{E_1}(x_i)|\leq \delta |B|$.  Since $a,x_i$ are both in $A'$, we have
$$
|N_{E_2}(a)\Delta N_{E_2}(x_i)|\leq |N_{E_2}(a)|+|N_{E_2}(a)|\leq 2 \sqrt{\e}|B|<\delta |B|/2.
$$
Combining these facts, we have that
$$
|N_{E_0}(a)\Delta N_{E_0}(x_i)|\leq |N_{E_2}(a)|+|N_{E_2}(a)|+|N_{E_1}(a)\Delta N_{E_1}(x_i)|\leq \delta |B|.
$$
Thus $a\sim_{\delta}x_i$, as desired.
\end{proof}

\subsection{Tame regularity for $3$-graphs of bounded $\VC_2$-dimension}\label{ss:vc2}
In this subsection we state the tame regularity lemma for $3$-graphs of bounded $\VC_2$-dimension from \cite{Terry.2021b}.

\begin{definition}
Suppose $H=(V,E)$ is a $3$-graph with $|V|=n$ and $\mu>0$. Suppose $t,\ell\geq 1$ and $\calP$ is a $(t,\ell)$-decomposition of $V$.  We say that $\calP$ is \emph{$\mu$-homogeneous with respect to $H$} if at least $(1-\mu){n\choose 3}$ triples $xyz\in {V\choose 3}$ satisfy the following.  There is some $G\in \triads(\calP)$ such that $xyz\in K_3^{(2)}(G)|$ and either $|E\cap K_3^{(2)}(G)|\geq \mu |K_3^{(2)}(G)|$ or $|E\cap K_3^{(2)}(G)\leq (1-\mu)|K_3^{(2)}(G)|$.
\end{definition}

Given a $3$-graph $H=(V,E)$ and a $(t,\ell,\e_1,\e_2)$-decomposition $\calP$ of $V$, we say that $\calP$ is \emph{$\mu$-homogeneous with respect to $H$} if at least $(1-\mu){|V|\choose 3}$ tripes $xyz\in {V\choose 3}$ are in a $\mu$-homogeneous triad of $\calP$.  We have the following theorem from \cite{Terry.2021b}.

\begin{theorem}\label{thm:vc2finite}
For all $k\geq 1$, there are $\e_1^*>0$, $\e_2^*:\mathbb{N}\rightarrow (0,1]$, and a function $f:(0,1]\rightarrow (0,1]$ with $\lim_{x\rightarrow 0}f(x)=0$ such that the following hold.

Suppose $t_0,\ell_0\geq 1$, $0<\e_1<\e_1^*$, and $\e_2:\mathbb{N}\rightarrow (0,1]$ satisfies $\e_2<\e_2^*$. Let $N$, $T$, and $L$ be as in Theorem \ref{thm:reg2} for $\e_1,\e_2,t_0,\ell_0$.  Suppose $H=(V,E)$ is a $3$-graph with $|V|\geq N$ and $\VC_2(H)<k$.  Then there exist $t_0\leq t\leq T$, $\ell_0\leq \ell\leq L$, and a $(t,\ell,\e_1,\e_2(\ell))$-decomposition of $V$ which is $\dev_{2,3}(\e_1,\e_2(\ell))$-regular and $f(\e_1)$-homogeneous with respect to $H$.  

Moreover, $f$ may be taken to have the form $x^{1/D}$ where $D\geq 1$ depends only on $k$.
\end{theorem}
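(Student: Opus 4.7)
The plan is to apply the strong regularity lemma (Theorem \ref{thm:reg2}) to $H$ and then use Proposition \ref{prop:suffvc2} to argue that the resulting decomposition is automatically homogeneous under the $\VC_2(H)<k$ hypothesis. First, I fix parameters: let $C=C(k)$ be the constant from Proposition \ref{prop:suffvc2}, choose $D\geq C$, and set $f(x):=x^{1/D}$, so that the relation $\e_1\leq f(\e_1)^C$ holds and Proposition \ref{prop:suffvc2} will apply with $\delta:=f(\e_1)$. I choose $\e_1^*$ and $\e_2^*$ small enough that both Theorem \ref{thm:reg2} and Proposition \ref{prop:suffvc2} may be invoked across the range of $\ell\in [\ell_0,L]$ that we produce.

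Next, I apply Theorem \ref{thm:reg2} with inputs $(\e_1,\e_2,t_0,\ell_0)$ to obtain a $\dev_{2,3}(\e_1,\e_2(\ell))$-regular $(t,\ell,\e_1,\e_2(\ell))$-decomposition $\calP$ of $V$ with $t_0\leq t\leq T$ and $\ell_0\leq \ell\leq L$. Setting $\mu:=f(\e_1)$, I form the auxiliary bipartite edge-colored graph on $\calP_{edge}\cup \calP_{cnr}$ with color classes $\mathbf{E}_0(\mu)$, $\mathbf{E}_1(\mu)$, $\mathbf{E}_2(\mu)$. By the contrapositive of Proposition \ref{prop:suffvc2}, since $\VC_2(H)<k$ and $\e_1\leq \mu^C$, no $\mu$-encoding of $U(k)$ exists in $(H,\calP)$. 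Equivalently, each fiber bipartite edge-colored subgraph, over a fixed pair $j_0k_0\in {[t]\choose 2}$, contains no $\mathbf{E}_0/\mathbf{E}_1$-copy of $U(k)$.

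The main obstacle is converting this absence of $U(k)$ into the desired $\mu$-homogeneity of $\calP$. I would proceed by contradiction: suppose more than $\mu n^3$ triples lie in triads with $H$-density in $(\mu,1-\mu)$, so that substantial $\mathbf{E}_2(\mu)$-weight occurs across fibers. My plan is then to iteratively refine $\calP_2$ within offending fibers using the graph regularity lemma, separating each middle-density triad into sub-triads whose $H$-density has been pushed into $\mathbf{E}_0(\mu')\cup \mathbf{E}_1(\mu')$ for some $\mu'\approx \mu$. Either the refined decomposition becomes $\mu$-homogeneous (progress), or a careful selection of refined high- and low-density sub-triads combines to yield an explicit $\mu'$-encoding of $U(k)$, contradicting Proposition \ref{prop:suffvc2}. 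Since each refinement step multiplies $\ell$ by at most a factor polynomial in $\mu^{-1}$, and $\e_2^*$ has been chosen to absorb this growth, a bounded number of iterations suffices and the final $\ell$ remains within $L$; the exponent $1/D$ in $f$ reflects the polynomial losses through Proposition \ref{prop:suffvc2}'s quantitative relation $\e_1=\delta^C$ compounded over the iteration, and depends only on $k$ via $C(k)$. The bounds $t\leq T$ and $\ell\leq L$ come directly from Theorem \ref{thm:reg2}.
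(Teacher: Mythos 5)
The paper does not actually prove Theorem \ref{thm:vc2finite}; it cites it as Proposition 3.2 of \cite{Terry.2021b} (stated there for $\disc_{2,3}$) and remarks that the $\dev_{2,3}$ version follows by re-running the same argument with the counting lemma for $\dev_{2,3}$. From the paper's remarks we do learn the shape of that argument: it shows directly that \emph{every} $\dev_{2,3}(\e_1,\e_2(\ell))$-regular triad of the decomposition produced by Theorem \ref{thm:reg2} is automatically $f(\e_1)$-homogeneous, i.e.\ a single application of the regularity lemma followed by a counting-lemma argument that a regular triad with density in $(f(\e_1),1-f(\e_1))$ would let you build a $\VC_2$-shattering of size $k$ inside $H$. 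There is no iteration.

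Your proposal diverges from this and has concrete gaps. First, invoking Proposition \ref{prop:suffvc2} only tells you there is no $\mathbf{E}_0/\mathbf{E}_1$-copy of $U(k)$ in the auxiliary graph; it says nothing about how many edges lie in $\mathbf{E}_2$, and homogeneity is precisely the claim that $\mathbf{E}_2$ is sparse. Absence of encodings does not constrain middle-density triads at all, so the contrapositive of Proposition \ref{prop:suffvc2} cannot by itself yield homogeneity. Second, your iterative refinement of $\calP_2$ would increase $\ell$ beyond the bound $L=L_0(\e_1,\e_2,t_0,\ell_0)$ from Theorem \ref{thm:reg2}, but the theorem you are proving requires $\ell\leq L$ with $L$ exactly as produced by Theorem \ref{thm:reg2}; you cannot absorb the growth into $\e_2^*$ without changing the quantifier structure of the statement. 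Third, the mechanism by which refining middle-density triads and then selecting sub-triads would assemble a $U(k)$ encoding is not explained and does not obviously work: refinement of the pair partition does not generically resolve a density-$d$ triad into high- and low-density sub-triads in a way that preserves $\dev_{2,3}$-regularity, and there is no reason the resulting configuration should have the combinatorial structure of $U(k)$. The correct route is the direct one: use the counting lemma to show a non-homogeneous regular triad forces $k$-$\IP_2$, so under the hypothesis $\VC_2(H)<k$ no such triad exists, and Proposition \ref{prop:suffvc2} is not needed at this stage (it is used later, in the proof of Theorem \ref{thm:main2}, to feed into Haussler's lemma).
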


Since the bounds in Theorem \ref{thm:vc2finite} come from Theorem \ref{thm:reg2}, they are of Wowzer type.  We also note that the proof of Theorem \ref{thm:vc2finite} in fact guarantees something slightly stronger, namely that every $\dev_{2,3}(\e_1,\e_2(\ell))$-regular triad of $\calP$ is $f(\e_1)$-homogeneous.

We remark here that Theorem \ref{thm:vc2finite} was proved in \cite{Terry.2021b} for the notion of $\disc_{2,3}$ rather than $\dev_{2,3}$, and without the moreover statement regarding the form of the function $f$ (see Proposition 3.2 in \cite{Terry.2021b}).  Examination of the proof of Proposition 3.2 in \cite{Terry.2021b} shows that the function $f$ depends on $k$ and a version of the counting lemma for $3$-graphs (namely Theorem 3.1 in \cite{Terry.2021b}).  An explicit expression for $f(x)$ in Proposition 3.2 of \cite{Terry.2021b} would thus require a version of the counting lemma for $\disc_{2,3}$ which is explicit in the parameters.  However, one can re-run all the arguments in \cite{Terry.2021b} using the quasi-randomness notion $\dev_{2,3}$ in place of $\disc_{2,3}$ to  obtain Theorem \ref{thm:vc2finite} as stated.   In this case, an explicit expression for $f$ can be obtained using the counting lemma for $\dev_{2,3}$ (see also the discussion following Proposition \ref{prop:suffvc2}).

\subsection{Other Preliminaries}\label{ss:counting}
In this subsection we give several lemmas, most of which are basic facts about regularity and counting.  First, we will use the following version of the triangle counting lemma.  

\begin{proposition}[Counting Lemma]\label{prop:counting}
Suppose $\e,d>0$.  Let $G=(A\cup B\cup C,E)$ be a $3$-partite graph such that each of $G[A,B]$, $G[B,C]$ and $G[A,C]$ has $\dev_2(\e, d)$. Then 
$$
\Big| |K_3^{(2)}(G)|- d^3|A||B||C||\Big|\leq 4\e^{1/4}|A||B||C|.
$$
\end{proposition}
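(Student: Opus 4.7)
The plan is to write the triangle count as a trilinear sum and decompose each edge indicator around its density. Specifically, for each pair $XY \in \{AB, BC, AC\}$, let $d_{XY}$ denote the actual edge density of $G[X,Y]$ and set
$$
f_{XY}(x,y) = \mathbf{1}_{xy \in E},\qquad g_{XY}(x,y) = f_{XY}(x,y) - d_{XY},
$$
so that $g_{XY}$ has mean zero on $X \times Y$. Then
$$
|K_3^{(2)}(G)| = \sum_{a \in A, b \in B, c \in C} f_{AB}(a,b) f_{BC}(b,c) f_{AC}(a,c),
$$
and substituting $f_{XY} = d_{XY} + g_{XY}$ expands the sum into eight terms indexed by subsets $S \subseteq \{AB, BC, AC\}$ indicating which factors are replaced by $g$.

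Next I would analyze the terms by $|S|$. The $|S|=0$ term is $d_{AB}d_{BC}d_{AC}|A||B||C|$; since each $d_{XY} = d \pm \e$ and $0 \le d \le 1$, this differs from $d^3|A||B||C|$ by at most $3\e|A||B||C|$. Each $|S|=1$ term vanishes because $g_{XY}$ has mean zero in each coordinate separately (e.g., $\sum_{a,c} g_{AC}(a,c) = 0$). For a $|S|=2$ term such as $d_{AC}\sum_{a,b,c} g_{AB}(a,b)g_{BC}(b,c)$, I would rewrite the inner sum as $\sum_b (\sum_a g_{AB}(a,b))(\sum_c g_{BC}(b,c))$ and apply Cauchy–Schwarz twice: once in $b$, then expanding each square to recognize the dev sum. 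Specifically, a Cauchy–Schwarz in $b$ plus a second one inside each factor gives that the inner sum is bounded by $\e^{1/2}|A||B||C|$, using that each $G[X,Y]$ satisfies $\dev_2(\e, d)$. The $|S|=3$ term $\sum_{a,b,c} g_{AB}(a,b)g_{BC}(b,c)g_{AC}(a,c)$ is handled by first applying Cauchy–Schwarz in $(a,c)$ against $g_{AC}$ (using $\|g_{AC}\|_2^2 \le |A||C|$), then a further Cauchy–Schwarz in $(b_1, b_2)$ after expanding the square, which again produces the two dev sums for $G[A,B]$ and $G[B,C]$ and yields a bound of $\e^{1/2}|A||B||C|$.

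Summing up, the main term error is $O(\e)$, the three $|S|=2$ terms each contribute $O(\e^{1/2})$, and the $|S|=3$ term contributes $O(\e^{1/2})$, so the total deviation from $d^3|A||B||C|$ is bounded by a constant times $\e^{1/2}|A||B||C|$. Since $\e^{1/2} \le \e^{1/4}$ when $\e \le 1$, collecting the constants carefully should give the stated bound $4\e^{1/4}|A||B||C|$ (in fact a stronger bound, which I would simply weaken to match the statement).

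The main obstacle is bookkeeping: tracking exactly how many Cauchy–Schwarz applications are needed for each term so that the resulting sum can actually be identified with the dev octahedral sum on $X \times X \times Y \times Y$, and verifying that the constants in front of $\e^{1/4}$ aggregate to at most $4$. This is routine but error-prone; using the looser exponent $\e^{1/4}$ in the statement gives some slack, which suggests the intended proof is the standard one above without needing tight constant tracking.
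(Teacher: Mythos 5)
Your approach --- expanding each edge indicator as its density plus a zero-mean error term, then controlling the nonvanishing cross terms by iterated Cauchy--Schwarz against the $\dev_2$ sums --- is exactly the argument Gowers gives for Lemma 3.4 of \cite{Gowers.20063gk}, which is what this paper cites in place of a proof, so the strategy is the intended one. Two small corrections worth making explicit. First, the $|S|=1$ terms vanish because $\sum_{x\in X,y\in Y}g_{XY}(x,y)=0$ (total mean zero, which holds by the choice $d_{XY}=|E\cap K_2[X,Y]|/|X||Y|$), not because $g_{XY}$ has mean zero ``in each coordinate separately'' --- the latter is false in general, though it is also not what the vanishing requires. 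Second, the constant tracking is a bit more optimistic than the argument supports as written: the bookkeeping you describe yields an error of about $3\e+4\e^{1/2}$ (three $|S|=2$ terms and one $|S|=3$ term each giving $\e^{1/2}|A||B||C|$, plus $3\e|A||B||C|$ from replacing $d_{AB}d_{BC}d_{AC}$ by $d^3$), and for $\e$ near $1$ this exceeds $4\e^{1/4}$. To actually land on the stated constant one should observe that $3\e+4\e^{1/2}\le 4\e^{1/4}$ holds once $\e\le 1/256$, while for $\e\ge 1/256$ the right-hand side $4\e^{1/4}|A||B||C|$ is already at least $|A||B||C|$, which trivially dominates $\bigl|\,|K_3^{(2)}(G)|-d^3|A||B||C|\,\bigr|$.
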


For a proof, see \cite{Gowers.20063gk} Lemma 3.4. The following symmetry lemma was proved in \cite{Terry.2021b} (see Lemma 4.9 there).

\begin{lemma}[Symmetry Lemma]\label{lem:twosticks}
For all $0<\e<1/4$ there is $n$ such that the following holds.  Suppose $G=(U\cup W, E)$ is a bipartite graph, $|U|,|W|\geq n$, and $U'\subseteq U$, $W'\subseteq W$ satisfy $|U'|\geq (1-\e)|U|$ and $|W'|\geq (1-\e)|W|$.  Suppose that for all $u\in U'$, 
$$
\max\{|N(u)\cap W|, |\neg N(u)\cap W|\}\geq (1-\e)|W|,
$$ 
and for all $w\in W'$, 
$$
\max\{|N(w)\cap U|, |\neg N(w)\cap U|\}\geq (1-\e)|U|.
$$ 
Then $|E|/|U||W|\in [0,2\e^{1/2})\cup (1-2\e^{1/2},1]$. 
\end{lemma}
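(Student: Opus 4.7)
The approach is a double counting argument based on a degree dichotomy. First I would partition $U' = U'_0 \sqcup U'_1$, where $U'_0 = \{u \in U' : |N(u) \cap W| \leq \e|W|\}$ and $U'_1 = \{u \in U' : |N(u) \cap W| \geq (1-\e)|W|\}$; the hypothesis guarantees every vertex of $U'$ lies in one of these two classes, and $W'_0, W'_1 \subseteq W'$ are defined symmetrically. The intuition is that if most of $U'$ lies in $U'_0$ and most of $W'$ in $W'_0$ then the density of $G$ is small, while if both concentrate in the index-$1$ parts the density is large; the main task is to rule out the mixed configurations.

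The key step is a double count of edges between $U'_1$ and $W'_0$. On the one hand, each $u \in U'_1$ has at least $(1-\e)|W|$ neighbors in $W$, hence at least $|W'_0| - \e|W|$ neighbors in $W'_0$. On the other hand, each $w \in W'_0$ has at most $\e|U|$ neighbors in $U$. Setting $\alpha = |U'_1|/|U|$ and $\beta' = |W'_0|/|W|$, this forces $\alpha\beta' \leq \e(\alpha + \beta')$, which (upon dividing) yields $1/\alpha + 1/\beta' \geq 1/\e$, so at least one of $\alpha, \beta'$ is at most $2\e$. A parallel count of non-edges between $U'_0$ and $W'_1$, using that vertices in $U'_0$ have many non-neighbors and vertices in $W'_1$ have few, gives the symmetric statement: at least one of $\alpha' := |U'_0|/|U|$ and $\beta := |W'_1|/|W|$ is at most $2\e$.

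Combining the two dichotomies, one of four sub-configurations must hold. The two diagonal cases $\{\alpha, \alpha' \leq 2\e\}$ and $\{\beta, \beta' \leq 2\e\}$ would force $|U'| \leq 4\e|U|$ or $|W'| \leq 4\e|W|$, contradicting the lower bounds $|U'| \geq (1-\e)|U|$ and $|W'| \geq (1-\e)|W|$ as long as $\e < 1/5$; for $\e \in [1/5, 1/4)$ the conclusion of the lemma is in any case vacuous, since then $[0, 2\e^{1/2}) \cup (1-2\e^{1/2}, 1] = [0,1]$. In the case $\alpha \leq 2\e$ and $\beta \leq 2\e$ a direct count yields
\[
|E| \leq |U \setminus U'|\cdot|W| + |U'_0|\cdot\e|W| + |U'_1|\cdot|W| \leq (\e + \e + 2\e)|U||W| = 4\e|U||W|,
\]
which is strictly less than $2\e^{1/2}|U||W|$ exactly when $\e < 1/4$. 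The remaining case $\alpha', \beta' \leq 2\e$ gives $|E| \geq (1-4\e)|U||W| > (1 - 2\e^{1/2})|U||W|$ by an analogous count of non-edges.

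I expect the main subtlety to be calibrating the thresholds: the $2\e$ threshold in the double count is precisely what yields the $4\e$ density estimate, which in turn matches the target $2\e^{1/2}$ bound only under the assumption $\e < 1/4$ (since $4\e < 2\sqrt{\e}$ is equivalent to $\sqrt{\e} < 1/2$). The hypothesis that $|U|$ and $|W|$ are sufficiently large does not actually enter the combinatorial argument and appears only to ensure nondegeneracy of the fractional bounds.
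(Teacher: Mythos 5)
Your proof is correct. The paper does not reproduce its own proof of this lemma but cites Lemma~4.9 of \cite{Terry.2021b}, so a direct comparison is not possible from the present document; on its merits, though, your argument is sound and complete. The degree dichotomy $U' = U'_0 \sqcup U'_1$ (and likewise $W'$) follows directly from the $\max$ hypothesis since $\e < 1/2$, the double count of $e(U'_1, W'_0)$ yields the bilinear inequality $\alpha\beta' \le \e(\alpha + \beta')$, which correctly forces $\min(\alpha,\beta') \le 2\e$, and the four-case analysis with the diagonal cases eliminated for $\e < 1/5$ and vacuity invoked for $\e \ge 1/5$ (where $2\e^{1/2} > 1/2$, so the two intervals cover $[0,1]$) covers the full range $0 < \e < 1/4$. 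Two small observations. First, the argument can be streamlined: a single application of your first dichotomy already suffices, because $\alpha \le 2\e$ alone gives $|E| \le (\e + \e + 2\e)|U||W| = 4\e|U||W| < 2\e^{1/2}|U||W|$ via the $U$-side edge count, while $\beta' \le 2\e$ alone gives $|E| \ge (1-4\e)|U||W| > (1-2\e^{1/2})|U||W|$ via the symmetric $W$-side non-edge count; the second dichotomy and the diagonal-case elimination are not actually needed, since the density bound in each of your mixed cases uses only one of the two available estimates. Second, you are right that the largeness hypothesis on $|U|, |W|$ plays no role: the computation is purely fractional with no integrality issues, so $n$ can be taken to be $1$.
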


We will use the following immediate corollary of this.

\begin{corollary}\label{cor:twosticks}
For all $0<\e<1/4$ there is $n$ such that the following holds.  Suppose $G=(U\cup W, E)$ is a bipartite graph with $|U|,|W|\geq n$, and $|E|/|U||W|\in (2\e^{1/2},1-2\e^{1/2})$.  Then one of the following hold.
\begin{enumerate}
\item There is $U'\subseteq U$ with $|U'|\geq \e|U|$ so that for all $u\in U$, $\frac{|N_E(u)\cap W|}{|W|}\in (\e,1-\e)$.  
\item There is $W'\subseteq W$ with $|W'|\geq \e|W|$ so that for all $w\in W$, $\frac{|N_E(w)\cap U|}{|U|}\in (\e,1-\e)$.  
\end{enumerate} 
\end{corollary}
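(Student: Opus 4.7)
The plan is to derive this as a direct contrapositive of Lemma \ref{lem:twosticks}. Let $n$ be as provided by Lemma \ref{lem:twosticks} for the given $\e$, and suppose toward contradiction that neither conclusion (1) nor (2) holds. I will unpack what each failure means in terms of individual vertex neighborhoods, and then observe that the negations are precisely the hypotheses of Lemma \ref{lem:twosticks}.

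More specifically, set
\[
U^* = \{u \in U : |N_E(u)\cap W|/|W| \in (\e, 1-\e)\}
\quad\text{and}\quad
W^* = \{w \in W : |N_E(w)\cap U|/|U| \in (\e, 1-\e)\}.
\]
The failure of (1) (applied with $U' = U^*$) gives $|U^*| < \e|U|$, and the failure of (2) gives $|W^*| < \e|W|$. The key observation is that $u \notin U^*$ is equivalent to saying $|N_E(u)\cap W| \leq \e|W|$ or $|N_E(u)\cap W| \geq (1-\e)|W|$, which in turn is equivalent to $\max\{|N(u)\cap W|, |\neg N(u)\cap W|\} \geq (1-\e)|W|$. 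Thus taking $U' := U\setminus U^*$ and $W' := W\setminus W^*$ gives sets satisfying $|U'| \geq (1-\e)|U|$, $|W'|\geq (1-\e)|W|$ and verifying the hypotheses of Lemma \ref{lem:twosticks}.

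Applying Lemma \ref{lem:twosticks} then forces $|E|/(|U||W|) \in [0, 2\e^{1/2})\cup(1-2\e^{1/2}, 1]$, directly contradicting the assumption that $|E|/(|U||W|) \in (2\e^{1/2}, 1-2\e^{1/2})$. This contradiction shows at least one of (1) or (2) must hold, completing the proof. There is no real obstacle here beyond bookkeeping; the only subtlety is noting that conditions (1) and (2) in the corollary are stated in terms of \emph{some} subset of vertices with proportion $\geq \e$ having middle density, which is exactly the quantitative negation of the ``most vertices have extremal neighborhood'' hypothesis of Lemma \ref{lem:twosticks} (a presumed typo in the quantification over $u\in U$ rather than $u\in U'$ is absorbed into the choice $U' := U^*$, which by construction consists entirely of such middle-density vertices).
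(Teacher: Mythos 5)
Your proof is correct, and it is the natural contrapositive argument that the paper implicitly invokes by labelling the statement an ``immediate corollary'' of Lemma \ref{lem:twosticks} without supplying a proof. The key observation that $u \notin U^*$ is equivalent to $\max\{|N(u)\cap W|, |\neg N(u)\cap W|\} \geq (1-\e)|W|$ is exactly right, and your reading of ``$u\in U$'' as a typo for ``$u\in U'$'' in conclusions (1) and (2) is the only sensible interpretation.
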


We will use a lemma which was orginially proved by Frankl and R\"{o}dl (Lemma 3.8 of \cite{Frankl.2002}) for another notion of quasirandomness for graphs, called $\disc_2$. 

\begin{definition}
Suppose $B=(U\cup W, E)$ is a bipartite graph, and $|E|=d_B|U||W|$.  We say $B$ \emph{has $\disc_2(\e,d)$} if $d_B=d\pm \e$ and for all $U'\subseteq U$ and $W'\subseteq W$,
$$
|E\cap K_2[U',W']|-d|U'||W'||\leq \e |U||W|.
$$
\end{definition}

Gowers proved the following quantitative equivalence between $\disc_2$ and $\dev_2$ (see Theorem 3.1 in \cite{Gowers.20063gk}).

\begin{theorem}\label{thm:equiv}
Suppose $B=(U\cup W, E)$ is a bipartite graph.  If $B$ has $\disc_2(\e,d)$ then it has $\dev_2(\e,d)$.  If $B$ has $\dev_2(\e,d)$, then it has $\disc_2(\e^{1/4},d)$. 
\end{theorem}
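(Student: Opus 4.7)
My plan is to prove the two implications separately, using different techniques in each direction.

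For $\dev_2(\e,d) \Rightarrow \disc_2(\e^{1/4},d)$, I would argue via two applications of the Cauchy-Schwarz inequality. Setting $g(u,w) = 1_E(u,w) - d_B$, and letting $\phi$, $\psi$ be the indicator functions of arbitrary subsets $U' \subseteq U$, $W' \subseteq W$, the discrepancy quantity is $S = \sum_{u,w} \phi(u)\psi(w) g(u,w)$. A first Cauchy-Schwarz in $w$ yields
\[
S^2 \le |W| \sum_w \Bigl(\sum_u \phi(u) g(u,w)\Bigr)^2 = |W| \sum_{u_0, u_1} \phi(u_0) \phi(u_1) \sum_w g(u_0, w) g(u_1, w),
\]
and a second Cauchy-Schwarz in $(u_0, u_1)$ gives
\[
S^4 \le |U|^2 |W|^2 \sum_{u_0, u_1, w_0, w_1} \prod_{i,j \in \{0,1\}} g(u_i, w_j) \le \e |U|^4 |W|^4.
\]
Hence $|S| \le \e^{1/4} |U||W|$, which produces the cut-norm bound of $\disc_2(\e^{1/4}, d)$ (after using $|d - d_B| \le \e$, guaranteed by $\dev_2(\e,d)$, to convert between the references densities $d$ and $d_B$).

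For $\disc_2(\e,d) \Rightarrow \dev_2(\e,d)$, the argument is more delicate. Writing $g = h - c$, where $h = 1_E - d$ and $c = d_B - d$ with $|c| \le \e$, I would expand the deviation sum as a signed sum over subsets $S \subseteq \{0,1\}^2$ of terms of the form $(-c)^{4-|S|} \sum \prod_{(i,j)\in S} h(u_i, w_j)$. Each inner sum can then be controlled using the discrepancy hypothesis: sums over configurations with all pairs ``disjoint'' collapse to products of single cut-norm quantities; sums over configurations with a shared $u$- or $w$-coordinate reduce to degree or codegree variances, which are each $O(\e|U||W|)$ or $O(\e|U|^2|W|)$ via a standard sign-splitting argument. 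Concretely, for degree variance, letting $\sigma(u) = \mathrm{sign}(d(u) - d|W|)$ and $U^\pm = \{u:\sigma(u) = \pm 1\}$, we have $\sum_u |d(u) - d|W|| = (e(U^+,W) - d|U^+||W|) - (e(U^-,W) - d|U^-||W|) \le 2\e|U||W|$ by discrepancy applied twice. A similar but more involved sign-splitting bounds the codegree variance. Summing all sixteen contributions with their signs yields the bound $D \le \e|U|^2|W|^2$.

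The main obstacle will be the second direction, specifically the codegree variance estimate and the careful bookkeeping needed to obtain exactly $\e$ rather than a larger constant multiple of $\e$. The key tools are the nonnegativity of the deviation sum as $D = \sum_{u_0, u_1}(\sum_w g(u_0, w)g(u_1, w))^2$, which permits absorbing sign issues cleanly, and the uniformity of the discrepancy bound over all rectangles $U' \times W'$, which allows the cut-norm applications arising in the expansion to combine without compounding losses.
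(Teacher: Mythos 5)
The paper itself does not prove Theorem \ref{thm:equiv}; it is deferred entirely to Gowers (Theorem 3.1 of the cited paper on quasirandomness for $3$-uniform hypergraphs). Your proposal therefore supplies an argument where the paper provides none, so I will assess it on its own terms.

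Your first direction ($\dev_2 \Rightarrow \disc_2$) via two applications of Cauchy--Schwarz is the standard argument and is essentially correct. One small point: the deviation functional uses $g = 1_E - d_B$, while discrepancy in Definition~2.15 is measured against $d$. So after deriving $|S| = \bigl|\sum_{U'\times W'}(1_E - d_B)\bigr| \le \e^{1/4}|U||W|$, you must add $|d_B - d|\,|U'||W'| \le \e|U||W|$ to convert, yielding $\e^{1/4} + \e$ rather than $\e^{1/4}$. This is a harmless constant loss, but in a proof whose stated output is $\disc_2(\e^{1/4},d)$ with no slack you should say explicitly whether you are absorbing it.

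The second direction ($\disc_2 \Rightarrow \dev_2$) has a real gap. Your decomposition $g = h - c$ with $h = 1_E - d$, $c = d_B - d$, and the degree-variance estimate $\sum_u |\deg(u) - d|W|| \le 2\e|U||W|$ via sign-splitting, are both fine. But the claim that ``sums over configurations with a shared coordinate reduce to degree or codegree variances,'' combined with ``summing all sixteen contributions with their signs yields $D \le \e|U|^2|W|^2$,'' does not follow from what you have written. The dominant term in the expansion is $T_{\{0,1\}^2} = \sum_{u_0,u_1} F'(u_0,u_1)^2$ with $F'(u_0,u_1) = \sum_w h(u_0,w)h(u_1,w)$, which (up to lower-order corrections) \emph{is} the codegree variance --- bounding it is not a reduction, it is the whole problem. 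Carrying out the sign-splitting you allude to (write $\sum_{u_0,u_1}|F'| = \sum_{u_0,u_1}\sigma(u_0,u_1)F'(u_0,u_1)$, fix $u_0$, split $\sigma(u_0,\cdot)$ and $h(u_0,\cdot)$ into positive/negative parts, and apply discrepancy to each of the four resulting rectangles) gives $\sum_{u_0,u_1}|F'(u_0,u_1)| \le 4\e|U|^2|W|$, hence, using the trivial bound $|F'|\le |W|$, $T_{\{0,1\}^2} \le 4\e|U|^2|W|^2$. All fifteen remaining terms $(-c)^{4-|S|}T_S$ carry at least one factor $|c|\le\e$ and contribute $O(\e^2)|U|^2|W|^2$, so they cannot cancel the leading $4\e$. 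Your outline therefore tops out at roughly $4\e|U|^2|W|^2$, not $\e|U|^2|W|^2$. To get the claimed constant one either needs a genuinely different argument beyond sign-splitting, or one must be content with a $\dev_2(O(\e),d)$ conclusion (which is what equivalences of this type usually assert, up to constants). As written, the ``careful bookkeeping'' you flag as the main obstacle is not a bookkeeping issue; the method itself does not reach the stated constant.

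Separately, your sentence describing configurations is too broad: only the $|S|=2$ subsets split cleanly into ``two disjoint pairs'' versus ``shared-coordinate pairs.'' The $|S|=3$ terms are of the form $\sum_{u,w} a(u)b(w)h(u,w)$ with $a(u)=\deg(u)-d|W|$, $b(w)=\deg(w)-d|U|$; these are bounded by sign-splitting too, but they are neither degree nor codegree variances, and it is worth being precise since the reader otherwise cannot verify the claimed $O(\e|U|^2|W|^2)$ and $O(\e|U|^2|W|)$ estimates you invoke.
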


Combining Theorem \ref{thm:equiv} with Lemma 3.8 in \cite{Frankl.2002}, we obtain the following.

\begin{lemma}\label{lem:3.8}
For all $\e>0$, $\rho\geq 2\e$, $0<p<\rho/2$, and $\delta>0$, there is $m_0=m_0(\e, \rho, \delta)$ such that the following holds.  Suppose $|U|=|V|=m\geq m_0$, and $G=(U\cup V, E)$ is a bipartite graph  satisfying $\dev_2(\e)$ with density $\rho$.  Then if $\ell=[1/p]$ and $\e\geq 10(1/\ell m)^{1/5}$, there is a partition $E=E_0\cup E_1\cup \ldots \cup E_\ell$,  such that 
 \begin{enumerate}
 \item For each $1\leq i\leq \ell $, $(U\cup V, E_i)$ has $\dev_2(\e^{1/4})$ with density $\rho p(1\pm \delta)$, and
 \item $|E_0|\leq \rho p(1+\delta)m^2$.
 \end{enumerate}
 Further, if $1/p\in \mathbb{Z}$, then $E_0=\emptyset$.
\end{lemma}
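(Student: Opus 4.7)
The plan is to derive Lemma \ref{lem:3.8} directly from Lemma 3.8 of \cite{Frankl.2002} by converting the quasi-randomness hypothesis back and forth between $\dev_2$ and $\disc_2$ using the equivalence in Theorem \ref{thm:equiv}. Since the combinatorial content (the edge-partitioning construction) is exactly what Frankl and R\"{o}dl proved, my job is essentially one of translation.

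First, given the bipartite graph $G = (U\cup V, E)$ satisfying $\dev_2(\e)$ with density $\rho$, I would invoke the second half of Theorem \ref{thm:equiv} to conclude that $G$ also satisfies $\disc_2(\e^{1/4})$ with density $\rho$. The Frankl--R\"{o}dl lemma, stated in terms of $\disc_2$, then produces a partition $E = E_0\cup E_1\cup\ldots\cup E_\ell$ in which each $(U\cup V, E_i)$ for $1\leq i\leq \ell$ is $\disc_2$-regular at a parameter controlled by $\e^{1/4}$ (and in particular at parameter $\e^{1/4}$, since $\disc_2$-regularity is monotone in the parameter), with density $\rho p(1\pm\delta)$, while $|E_0|\leq \rho p(1+\delta)m^2$ and $E_0=\emptyset$ when $1/p\in \mathbb{Z}$.

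Second, I would apply the first half of Theorem \ref{thm:equiv} in the opposite direction: since $\disc_2(\eta,d)$ implies $\dev_2(\eta,d)$ with no loss, each $(U\cup V, E_i)$ inherits $\dev_2(\e^{1/4})$ at the stated density, which matches the conclusion of the lemma. The conditions $|E_0|\leq \rho p(1+\delta)m^2$ and the characterization of when $E_0=\emptyset$ are transferred from Frankl--R\"{o}dl unchanged, since they concern edge counts rather than quasirandomness.

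The only real work is the bookkeeping of parameters: I need to verify that the technical hypothesis $\e\geq 10(1/\ell m)^{1/5}$, together with the choice of $m_0=m_0(\e,\rho,\delta)$, is sufficient to meet the analogous hypothesis in Lemma 3.8 of \cite{Frankl.2002} after the substitution $\e\mapsto \e^{1/4}$, and that this is compatible with the density constraints $\rho\geq 2\e$ and $0<p<\rho/2$. Provided these inequalities are traced through faithfully, no further argument is required; the main obstacle is simply ensuring that the translation via Theorem \ref{thm:equiv} does not inadvertently weaken a side hypothesis needed by the Frankl--R\"{o}dl proof.
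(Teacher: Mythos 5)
Your proposal matches the paper's approach: the paper obtains this lemma by combining Theorem \ref{thm:equiv} with Lemma 3.8 of \cite{Frankl.2002}, converting the $\dev_2(\e)$ hypothesis to $\disc_2(\e^{1/4})$, applying the Frankl--R\"odl slicing lemma, and returning via the lossless direction of Theorem \ref{thm:equiv}. You correctly identify the only remaining work as parameter bookkeeping under the substitution $\e\mapsto\e^{1/4}$, which the paper likewise leaves implicit.
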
 

We will also use the following fact, which can be obtained from Fact 2.3 in \cite{Terry.2021b} along with Theorem \ref{thm:equiv}.

\begin{fact}\label{fact:adding}
Suppose $E_1$ and $E_2$ are disjoint subsets of $K_2[U,V]$.  If $(U\cup V, E_1)$ has $\dev_2(\e_1,d_1)$, and $(U\cup V, E_2)$ has $\dev_2(\e_2,d_2)$, then $(U\cup V, E_1\cup E_2)$ has $\dev_2(\e_1^{1/4}+\e_2^{1/4}, d_2+d_1)$.  
\end{fact}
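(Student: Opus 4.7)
The plan is to reduce the $\dev_2$ additivity statement to the corresponding additivity statement for $\disc_2$, which is essentially immediate because discrepancy behaves linearly on disjoint edge sets. The strategy has three steps: translate the $\dev_2$ hypotheses to $\disc_2$, add the discrepancies, and translate back.

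First, I would invoke Theorem \ref{thm:equiv}. Since $(U\cup V, E_1)$ has $\dev_2(\e_1,d_1)$ and $(U\cup V, E_2)$ has $\dev_2(\e_2,d_2)$, the theorem yields $\disc_2(\e_1^{1/4},d_1)$ for $E_1$ and $\disc_2(\e_2^{1/4},d_2)$ for $E_2$. The crucial observation is that going from $\disc_2$ back to $\dev_2$ is lossless in the parameter $\e$, so incurring one fourth-root penalty at this stage is all that the final bound needs to accommodate.

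Next, I would directly check that the disjoint union has $\disc_2(\e_1^{1/4}+\e_2^{1/4}, d_1+d_2)$. For the density estimate, one writes $|E_1\cup E_2|/(|U||V|)=|E_1|/(|U||V|)+|E_2|/(|U||V|)$, which lies in $(d_1+d_2)\pm(\e_1^{1/4}+\e_2^{1/4})$. For the discrepancy condition, given $U'\subseteq U$ and $V'\subseteq V$, the quantity $\bigl||(E_1\cup E_2)\cap K_2[U',V']|-(d_1+d_2)|U'||V'|\bigr|$ decomposes as the sum of the analogous quantities for $E_1$ and for $E_2$ by disjointness, and then the triangle inequality gives a bound of $(\e_1^{1/4}+\e_2^{1/4})|U||V|$. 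This additivity step is precisely Fact 2.3 of \cite{Terry.2021b}. Finally, I would apply the first half of Theorem \ref{thm:equiv} in the $\disc_2\Rightarrow\dev_2$ direction to upgrade this conclusion to $\dev_2(\e_1^{1/4}+\e_2^{1/4}, d_1+d_2)$, completing the proof.

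There is essentially no obstacle here: the argument is a routine concatenation of the equivalence between discrepancy and deviation with the elementary additivity of discrepancy on disjoint edge sets. The only point worth double-checking is the bookkeeping of parameters: the one fourth-root loss comes exclusively from the initial $\dev_2\to\disc_2$ translation, and none from either the additivity step or the return $\disc_2\to\dev_2$ step, which is exactly what yields the stated form $\e_1^{1/4}+\e_2^{1/4}$.
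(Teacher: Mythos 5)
Your proposal is correct and matches the paper's intended argument exactly: the paper itself only states that the fact ``can be obtained from Fact 2.3 in \cite{Terry.2021b} along with Theorem \ref{thm:equiv},'' and your three-step translation $\dev_2\Rightarrow\disc_2$, additivity of $\disc_2$ on disjoint edge sets, $\disc_2\Rightarrow\dev_2$ is precisely that derivation, with the single fourth-root loss correctly attributed to the initial $\dev_2\to\disc_2$ direction.
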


Finally, we will use the fact that triads with density near $0$ or $1$ are quasirandom.  For completeness, we include a proof of this in the appendix.

\begin{proposition}\label{prop:homimpliesrandome}
For all $0<\e<1/2$, $d_2>0$, and $0<\delta\leq (d_2/2)^{48}$, there is $N$ such that the following holds.  Suppose $H=(V_1\cup V_2\cup V_3, R)$ is a $3$-partite $3$-graph on $n\geq N$ vertices, and for each $i,j\in [3]$, $||V_i|-|V_j||\leq \delta|V_i|$.  Suppose $G=(V_1\cup V_2\cup V_3, E)$ is a $3$-partite graph, where for each $1\leq i< j\leq 3$, $G[V_i,V_j]$ has $\dev_2(\delta,d_2)$, and assume
$$
|R\cap K_3^{(2)}(G))|\leq \e|K_3^{(2)}(G)|.
$$
Then $(H|G,G)$ has $\dev_{2,3}(\delta,6\e)$.
\end{proposition}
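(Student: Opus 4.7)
Since the hypothesis already gives each $G[V_i,V_j]$ the $\dev_2(\delta,d_2)$-quasirandomness needed for the second slot of the conclusion, the substance of the proposition is the octahedral-sum bound
\[
\Sigma := \sum_{u_0,u_1,w_0,w_1,z_0,z_1}\prod_{(i,j,k)\in\{0,1\}^3} h_{H|G,G}(u_i,w_j,z_k) \le (\text{small const})\cdot\e\,d_2^{12}|V_1|^2|V_2|^2|V_3|^2.
\]
My plan for this is a direct expand-and-count argument.

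Write $R' = R\cap K_3^{(2)}(G)$, $d_3 = |R'|/|K_3^{(2)}(G)|\le\e$, and $h = \mathbb{1}_{R'} - d_3\,\mathbb{1}_{K_3^{(2)}(G)}$. Expanding the $8$-fold product by inclusion-exclusion,
\[
\prod_v h_v = \sum_{S\subseteq\{0,1\}^3}(-d_3)^{8-|S|}\prod_{v\in S}\mathbb{1}_{R',v}\prod_{v\notin S}\mathbb{1}_{K_3^{(2)}(G),v},
\]
and summing gives $\Sigma = \sum_S(-d_3)^{8-|S|}O(S)$, where $O(S)$ is the number of ordered octahedra of $G$ (two vertices per class, all $12$ bipartite edges present) whose $|S|$ triples indexed by $S$ additionally lie in $R'$.

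The key estimates come from iterating the bipartite counting lemma (Proposition \ref{prop:counting}) over the $12$ edges of an octahedron, after first converting $\dev_2(\delta,d_2)$ to $\disc_2(\delta^{1/4},d_2)$ via Theorem \ref{thm:equiv}. The hypothesis $\delta\le(d_2/2)^{48}$ is tuned so these $\le 12$ iterations, each contributing a factor of the form $1+O(\delta^{1/4}/d_2^c)$, accumulate only $o(1)$ error. This yields $O(\emptyset) = (1+o(1))d_2^{12}|V_1|^2|V_2|^2|V_3|^2$ and, for any $(a,b,c)\in K_3^{(2)}(G)$, $(1+o(1))d_2^9|V_1||V_2||V_3|$ ordered extensions to a full octahedron of $G$. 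Summing extensions over $(a,b,c)\in R'$ and using $|R'|\le\e(1+o(1))d_2^3|V_1||V_2||V_3|$ gives $O(\{v\}) \le (1+o(1))\e\,d_2^{12}|V_1|^2|V_2|^2|V_3|^2$ for any $v$, and the monotonicity $O(S)\le O(\{v\})$ whenever $v\in S$ (a consequence of $R'\subseteq K_3^{(2)}(G)$) extends this bound to all nonempty $S$.

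Combining these ingredients with the triangle inequality on the signed sum yields
\[
\Sigma \le (1+o(1))\,d_2^{12}|V_1|^2|V_2|^2|V_3|^2\,\big(d_3^8 + \e\big((1+d_3)^8 - d_3^8\big)\big),
\]
and for $d_3\le\e<1/2$ the bracketed factor is at most a small explicit absolute constant multiple of $\e$, which delivers the required octahedral-sum bound. The main obstacle throughout is the careful bookkeeping of the many $(1+o(1))$ error terms from the iterated bipartite counting lemma: the seemingly strange threshold $\delta\le(d_2/2)^{48}$ earns its keep here, as it is precisely calibrated so that the cumulative relative error stays negligible compared to the target constant.
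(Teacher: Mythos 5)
Your proposal identifies the right target (bounding the octahedral sum for $h_{H|G,G}$) and the right tools (octahedron counting, $\disc_2/\dev_2$ equivalence), but it contains a genuine gap in the central estimate. You assert that ``for any $(a,b,c)\in K_3^{(2)}(G)$'' the number of ordered octahedral extensions is $(1+o(1))d_2^9|V_1||V_2||V_3|$. This per-triple statement is false: the counting lemma only controls the \emph{total} octahedron count $|K_{2,2,2}[V_1,V_2,V_3]|\approx d_2^{12}\prod|V_i|^2$, i.e.\ the \emph{average} extension count over triples in $K_3^{(2)}(G)$, and individual triples can have extension counts as large as $\prod|V_i|$. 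Since $R'$ may in principle be concentrated on precisely those heavy triples, your bound $O(\{v\})\le(1+o(1))\e d_2^{12}\prod|V_i|^2$ does not follow. The paper addresses this with a dedicated concentration result (Lemma \ref{lem:count}), which shows via a Markov-type argument that all but a $\mu d_2^3\prod|V_i|$-sized exceptional set of triples have at most $(1+\mu)d_2^9\prod|V_i|$ extensions. Even granting that, you should check that your inclusion--exclusion split does not survive: bounding the exceptional contribution to $O(\{v\})$ by $|R'\setminus J|\cdot\prod|V_i|\approx\mu d_2^3\prod|V_i|^2$ with $\mu=d_2^{12}$ gives an error $\approx d_2^{15}\prod|V_i|^2=d_2^3\cdot d_2^{12}\prod|V_i|^2$, which need not be $\lesssim\e d_2^{12}\prod|V_i|^2$ when $d_2^3>\e$.

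The paper's actual route is different and sidesteps this: instead of expanding $\prod h$ over all subsets $S\subseteq\{0,1\}^3$, it splits the octahedra into $I_1$ (all eight triples in $R'=R\cap K_3^{(2)}(G)$) and $I_2$ (all eight in $K_3^{(2)}(G)$ but at least one outside $R'$). On $I_2$ the product automatically carries a factor $-d$, giving $|\prod h|\le d\le\e$, so $I_2$ contributes $\le\e\cdot|K_{2,2,2}[V_1,V_2,V_3]|\lesssim\e d_2^{12}\prod|V_i|^2$ with no need for per-triple control. On $I_1$ the constraint that \emph{all} eight triples lie in $R'$ is crucial: the $(1,1,1)$-triple must itself be in $R'$, so the number of valid extensions of a fixed $(u_0,w_0,z_0)$ is at most $|R'|$ rather than $\prod|V_i|$. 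This extra factor of (roughly) $\e d_2^3$ is exactly what makes the exceptional set in Lemma \ref{lem:count} affordable. Your singleton bound $O(\{v\})$ for $|S|=1$ has no such built-in constraint, which is why the full inclusion--exclusion decomposition is harder to close than the paper's $I_1/I_2$ split. I would suggest abandoning the binomial expansion and adopting the $I_1/I_2$ dichotomy, together with a concentration lemma of the form of Lemma \ref{lem:count}.
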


\section{Proof of Main Theorem}

We first give a more precise statement of our main theorem.

\begin{theorem}\label{thm:main2}
For all $k\geq 1$, there are polynomials $p_1(x), p_2(x,y), p_3(x)$, a constant $\e_1^*>0$, and a function $\e^*_2:\mathbb{N}\rightarrow (0,1]$ such that the following holds, where $T_0(x,y,z,w)$ is as in Theorem \ref{thm:reg2}.

For all $0<\e_1<\e_1^*$ and $\e_2:\mathbb{N}\rightarrow (0,1]$ satisfying $\e_2<\e_2^*$, there is $L\leq \e_1^{-O_k(k)}$ such that the following holds for $T=T_0(p_1(\e_1), \e_2\circ q_2, p_3(\e_1^{-1}), 1)$, where $q_2(y)=p_2(\e_1,y)$.

Every sufficiently large $3$-graph $H=(V,E)$ with $\VC_2(H)<k$ has a $\dev_{2,3}(\e_1,\e_2(\ell))$-regular $(t,\ell,\e_1,\e_2(\ell))$-decomposition with $ \ell\leq  L$ and $t\leq T$.
\end{theorem}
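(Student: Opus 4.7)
The plan is to apply Theorem~\ref{thm:vc2finite} to obtain an initial decomposition of Wowzer complexity that is also homogeneous, then use Haussler's packing lemma (via Lemma~\ref{lem:hausslercor}) to merge the pair-pieces $P_{ij}^\alpha$ within each pair $(V_i,V_j)$ into polynomially many classes based on their neighborhoods in the auxiliary bipartite edge-colored graph of Definition~\ref{def:reducedP}, and finally refine via Lemma~\ref{lem:3.8} to obtain an equipartition of the pairs into blocks of density $1/\ell'$.

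Concretely, I would choose auxiliary parameters $\e_1' = \e_1^{O(D)}$ (with $D$ from Theorem~\ref{thm:vc2finite}) and a fast-decaying $\e_2' \ll \e_2$, and apply Theorem~\ref{thm:vc2finite} to get a $(t,\ell,\e_1',\e_2'(\ell))$-decomposition $\calP$ that is $\dev_{2,3}(\e_1',\e_2'(\ell))$-regular and $f(\e_1')$-homogeneous, with $t\le T$ and $\ell\le L$. The point is that $f(\e_1')\ll \e_1$, so homogeneity forces most triads to have density w.r.t.\ $H$ near $0$ or $1$. Then for each pair $(j_0,k_0)\in\binom{[t]}{2}$, form the bipartite edge-colored graph with vertex set $A=\{P_{j_0k_0}^\alpha\}\sqcup B=\{P_{ij_0}^\beta P_{ik_0}^\gamma : i,\beta,\gamma\}$ and edges $\mathbf{E}_0,\mathbf{E}_1,\mathbf{E}_2$ from Definition~\ref{def:reducedP} with threshold $\delta_E=f(\e_1')$. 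A Markov argument over pairs shows that all but an $O(\sqrt{f(\e_1')})$-fraction of pairs $(j_0,k_0)$ satisfy $|\mathbf{E}_2^{(j_0,k_0)}|\le \sqrt{f(\e_1')}\,|A||B|$; call these \emph{good}. Since $\VC_2(H)<k$, Proposition~\ref{prop:suffvc2} rules out any $\mathbf{E}_0/\mathbf{E}_1$-copy of $U(k)$ in any of these auxiliary graphs, so Lemma~\ref{lem:hausslercor} applied with similarity threshold $\delta_H$ proportional to $\e_1$ yields $m\le 2c_1(\delta_H/8)^{-k}=\e_1^{-O_k(k)}$ representative classes plus a bad set of size $\le f(\e_1')^{1/4}|A|$ in each good pair.

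I would then merge the pieces $P_{j_0k_0}^\alpha$ within each class into merged pieces $Q_{j_0k_0}^r$, $r\le m$, together with a bad piece. By iterated Fact~\ref{fact:adding}, each $Q_{j_0k_0}^r$ inherits $\dev_2(\e'',\rho_r)$-regularity with $\e''$ controlled by $\e_2'(\ell)$. Next, set $\ell'=\lceil C'm/\e_1\rceil$ for a large absolute constant $C'$ and apply Lemma~\ref{lem:3.8} to each $Q_{j_0k_0}^r$ with target block-density $1/\ell'$, yielding $\lfloor \rho_r\ell'\rfloor$ sub-pieces each $\dev_2(\e_2(\ell'),1/\ell')$-regular. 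For the $O(\sqrt{f(\e_1')})$-fraction of bad pairs $(j_0,k_0)$, take a trivial partition and count them toward the $\e_1$-error budget; similarly for the bad pieces within each good pair. This produces a $(t,\ell',\e_1,\e_2(\ell'))$-decomposition with $\ell'\le \e_1^{-O_k(k)}$ as desired. To verify $\dev_{2,3}$-regularity, consider a typical refined triad $G=(V_i\cup V_j\cup V_k, Q'\cup Q''\cup Q''')$, where $Q',Q'',Q'''$ are sub-pieces of merged pieces $Q_{ij}^r,Q_{ik}^s,Q_{jk}^t$. By the Haussler similarity (applied on each of the three sides), for $(\alpha,\beta,\gamma)\in r\times s\times t$ the original component triads share a single color in $\{\mathbf{E}_0,\mathbf{E}_1\}$ for all but a $3\delta_H$-fraction of triples. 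Averaging via Proposition~\ref{prop:counting} shows that $|E\cap K_3^{(2)}(G)|/|K_3^{(2)}(G)|$ lies within $O(\delta_E+\delta_H)\le \e_1$ of $0$ or $1$, so Proposition~\ref{prop:homimpliesrandome} gives $G$ has $\dev_{2,3}(\e_1,O(\e_1))$ with respect to $H$.

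The main obstacle is tracking the epsilons through merging and refinement. Specifically, (i) the $\dev_2$ error of $Q_{j_0k_0}^r$ degrades under Fact~\ref{fact:adding} by a factor growing with the class size $s_r\le \ell$, so $\e_2'(\ell)$ must decay very rapidly in $\ell$; (ii) Lemma~\ref{lem:3.8} requires its hypothesis $\e\ge 10(1/(\ell'm))^{1/5}$, constraining $\ell'$ relative to $\e''$; (iii) the $O(\sqrt{f(\e_1')})$-fraction of bad pairs and bad pieces must together fit inside the final $\e_1$-error budget, forcing $\e_1'\le \e_1^{2D}/O(1)$. Since the final complexity $\ell'\le \e_1^{-O_k(k)}$ depends on $\e_1$ only via the Haussler step, these bookkeeping constraints do not affect the asymptotic form of the bound on $\ell'$, though they do give rise to the Wowzer-type dependence of $t$ on $\e_1,\e_2$ inherited from Theorem~\ref{thm:reg2}.
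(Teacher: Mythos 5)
Your high-level plan is exactly the paper's: apply Theorem~\ref{thm:vc2finite} to get a homogeneous Wowzer-complexity decomposition, form the auxiliary edge-colored graphs of Definition~\ref{def:reducedP}, rule out $U(k)$-copies via Proposition~\ref{prop:suffvc2}, cluster the pair-pieces within each pair via Lemma~\ref{lem:hausslercor}, and then refine the merged pieces into equal-density blocks via Fact~\ref{fact:adding} and Lemma~\ref{lem:3.8}. The Markov argument over pairs is the paper's set $\Psi$, and the error-budget accounting you sketch is correct in spirit.

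However, the key step of your verification --- ``By the Haussler similarity (applied on each of the three sides), for $(\alpha,\beta,\gamma)\in r\times s\times t$ the original component triads share a single color in $\{\mathbf{E}_0,\mathbf{E}_1\}$ for all but a $3\delta_H$-fraction of triples'' --- does not hold as stated, and this is the heart of the difficulty. The Haussler similarity of Lemma~\ref{lem:hausslercor}, applied to $H_{ij}$, controls $|N_{E_{ij}^w}(P_{ij}^\alpha)\Delta N_{E_{ij}^w}(x_{ij}^u)|$ as a fraction of \emph{all} corners $U_{ij}=\{P_{is'}^\beta P_{js'}^\gamma : s'\in[t]\setminus\{i,j\},\,\beta,\gamma\}$, i.e.\ simultaneously over all third vertex-classes $s'$. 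For a \emph{fixed} $s$, the corners with that $s$ form only a $1/(t-2)$-fraction of $U_{ij}$, whereas the allowed symmetric-difference fraction $\delta$ is a constant depending only on $\e_1$ and $k$, while $t$ is Wowzer-large; so the ``bad'' corners could entirely swamp the slice with fixed $s$. The paper repairs this with an averaging argument over $s$ (the ``troublesome triple'' machinery and the set $\Omega_3$), which recovers a restricted similarity $a\sim_{js,vw}x_{ij}^u$ for most $(i,j,s,u,v,w)$. But even granted that, the three one-variable swaps cannot simply be chained: passing from $G(\alpha,\beta,\gamma)$ to $G(x_{ij}^u,\beta,\gamma)$ and then to $G(x_{ij}^u,x_{is}^v,\gamma)$ requires control over troublesome triples with $\alpha$ equal to the single representative $x_{ij}^u$, which is not implied by the $\Omega_3$ bound on the total number of troublesome triples. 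This is precisely why the paper proves the nontrivial Claim~\ref{cl:hom}: a contradiction argument that fixes a typical $a_*$, invokes Corollary~\ref{cor:twosticks} to produce a vertex $b_*$ of intermediate $S_1$-degree, and then finds $c_0,c_1\in C$ that should have nearly identical neighborhoods (both close to $x_{js}^w$'s) yet have essentially disjoint $R_1$/$R_0$-neighborhoods in $B_3$. Your proposal omits both the troublesome-triple machinery and this crucial two-sticks contradiction, so the homogeneity of the merged triads --- and hence the final regularity of $\calQ$ --- is not established.

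A secondary issue: with $\delta_H$ ``proportional to $\e_1$'' and $\e\approx\sqrt{f(\e_1')}\approx \e_1^{O(1)}$, the hypothesis of Lemma~\ref{lem:hausslercor}, namely $\e\le c_1^{-2}(\delta_H/8)^{2k+2}$, forces $\e_1^{O(1)}\le c_1^{-2}(\e_1/8)^{2k+2}$, which fails as $\e_1\to 0$ once $k\ge 1$; you would need $\delta_H$ itself to be a large polynomial power of $\e_1$ (the paper takes $\delta=\tau_1^{400}/1000$ with $\tau_1=\e_1^{4D}$). This is fixable by tuning exponents and does not change the $\e_1^{-O_k(k)}$ form of $\ell$, but as written the parameter choices are inconsistent.
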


We now give a few remarks regarding the bounds. As can be seen above, the bound $T$ in Theorem \ref{thm:main2} is obtained by composing the bound $T_0$ from Theorem \ref{thm:reg2} with several polynomial functions.  This does not change the fundamental shape of the bound in terms of the Ackerman heirarchy, and thus the bound for $t$ in Theorem \ref{thm:main2} remains a Wowzer type function.  On the other hand, we see that the bound for $\ell$ becomes polynomial in $\e_1^{-1}$.  

The polynomial $p_3$ in Theorem \ref{thm:main2} depends on the $f$ in Theorem \ref{thm:vc2finite}, which in turn depends on the hypergraph counting lemma for $\dev_{2,3}$.  One could therefore obtain a quantitative version of Theorem \ref{thm:main2} for the equivalent quasirandomness notions of $\disc_{2,3}$ and $\oct_{2,3}$ using the same arguments, given a quantitative version of their respective counting lemmas. 

The general strategy for the proof of Theorem \ref{thm:main2} is as follows.  Given a large $3$-graph $H$ of $\VC_2$-dimension less than $k$, we will first apply Theorem \ref{thm:vc2finite} to obtain a homogeneous, regular partition $\calP$ for $H$.  We will then consider the auxiliary edge-colored graphs associated to $\calP$, as described in Subsection \ref{ss:aux}.  These will contain no copies of $U(k)$ by Proposition \ref{prop:suffvc2}, allowing us to apply Lemma \ref{lem:hausslercor}.  This will yield decompositions for the auxiliary edge colored graphs, which we will eventually use to define a new decomposition $\calQ$ for $H$ which is still regular and homogeneous, but which has a polynomial bound for the parameter $\ell$.  This last part requires the most work, as well as most of the lemmas from Subsection \ref{ss:counting}.

We have not sought to optimize constants which do not effect the overall form of the bounds involved.

\vspace{2mm}
\begin{proofof}{Theorem \ref{thm:main}}
Fix $k\geq 1$ and let $c_1=c_1(k)$ be as in Theorem \ref{thm:haussler}.  Let $\rho_1>$, $\rho_2:\mathbb{N}\rightarrow (0,1]$, and $f$ be as in Theorem \ref{thm:vc2finite} for $k$, and let $D=D(k)$ be so that $f(x)=x^{1/D}$ (see Theorem \ref{thm:vc2finite}).  Let $\mu_1>0$ and $\mu_2:\mathbb{N}\rightarrow (0,1]$ be as in Proposition \ref{prop:suffvc2} for $k$ and $1/4$.  Set $\e_1^*=\min\{\mu_1,\rho_1,(1/4)^D\}$ and define $\e^*_2:\mathbb{N}\rightarrow (0,1]$ by setting $\e^*_2(x)=\min\{\mu_2(x),\rho_2(x), (1/2x)^{48}\}$, for each $x\in \mathbb{N}$.  

Suppose $0<\e_1<\e_1^*$ and $\e_2:\mathbb{N}\rightarrow (0,1]$ satisfies $\e_2<\e_2^*$.  We now choose a series of new constants.  Set $\tau_1=\e_1^{4D}$ and note $\tau_1<f(\e_1)$. Set $\delta=\tau_1^{400}/1000$, $\e_1'=(\delta/8c_1)^{2k+1000}$, $m=\lceil 2c_1(\delta/8)^{-2k-2}\rceil$, and $\e_1''=(\e_1')^2/1000$.  Define $\e_2',\e_2'':\mathbb{N}\rightarrow (0,1]$ by setting, for each $x\in \mathbb{N}$, $\e_2'(x)=\e_1''\e_2(x)\e_2(2^4\delta^{-8k-10})$ and $\e_2''(x)=\e_2(\delta^{-4}m^4)\e_2'(x)^5/4$.  Note there are polynomials $p_1(x)$, $p_2(x,y)$ depending only on $k$ such that $\e_1''=p_1(\e_1)$  and $\e_2''(x)=p_2(\e_1,x)$.  To aid the reader in keeping track of the constants, we point out that the following inequalities hold.
$$
\e_1''<\e_1'<\delta<\tau_1<\e_1<\e_1^* \text{ and }\e_2''<\e_2'<\e_2<\e_2^*.
$$

Choose $t_0$ sufficiently large so that $t^3/6\geq (1-\e_1''){t\choose 3}$, $\frac{(1-3\e_1'')t^3}{12}\geq (1-\e_1'){t\choose 3}$, and
$$
{t\choose 3}(1-6(\e_1')^{1/4}-(\e_1')^{3/8})\geq {t\choose 3}(1-(\e_1'))^{1/8}.
$$
  Note there is some polynomial $p(x)$ depending only on $k$ so that we can take $t_0=p(\e_1^{-1})$. Finally, choose $T_1$, $L_1$, and $N_1$  as in Theorem \ref{thm:reg2}  for $\e''_1$, $\e''_2$, $t_0$ and $\ell_0=1$.  
  
Set $L=\lceil \delta^{-4}m^4\rceil$, $T=T_1$, choose $N$ sufficiently large compared to all the previously chosen constants.   Notice that $L=O_k(\e_1^{-O_k(1)})$ and $T=T_0(p_1(\e_1), \e_2\circ q_2, p(\e_1^{-1}), 1)$, where $T_0(x,y,z,w)$ is as in Theorem \ref{thm:reg2} and $q_2(y)=p_2(\e_1,y)$.

Suppose $H=(V,E)$ is a $3$-graph with $|V|\geq N$ satisfying $\VC_2(H)<k$.  Theorem \ref{thm:vc2finite} implies there exist $1\leq \ell\leq L_1$, $t_0\leq t\leq T_1$, and $\calP_1$ a  $(t,\ell,\e''_1,\e''_2(\ell))$-decomposition of $V$ which is $\dev_{2,3}(\e''_1,\e''_2(\ell_1))$-regular and $f(\e''_1)$-homogeneous with respect to $H$.  Say $\calP_1=\{V_1,\ldots, V_t\}$ and $\calP_2=\{P_{ij}^\alpha: ij\in {[t]\choose 2}, \alpha \in [\ell]\}$.  Note that $f(\e_1'')=(\e_1'')^{1/D}<1/4$.  Recall that as mentioned after Theorem \ref{thm:vc2finite}, we may assume that all $\dev_{2,3}(\e_1'',\e_2''(\ell))$-regular triads of $\calP$ are $f(\e_1'')$-homogeneous with respect to $H$.

Given $ij\in {[t]\choose 2}$ and $\alpha\in [\ell]$, let $G_{ij}^{\alpha}=(V_i\cup V_j, P_{ij}^{\alpha})$.  Given $ijs\in {[t]\choose 3}$ and $1\leq \alpha,\beta,\gamma\leq \ell$, set $G_{ijs}^{\alpha,\beta,\gamma}=(V_i\cup V_j\cup V_s, P_{ij}^\alpha\cup P_{js}^\beta\cup P_{is}^\gamma)$ and $H_{ijs}^{\alpha,\beta,\gamma}=(V_i\cup V_j\cup V_s, E\cap K_3^{(2)}(G_{ijs}^{\alpha,\beta,\gamma}))$.  

We will use throughout that since $\e_2''(x)\leq \e_2'(x)^5/4$, Proposition \ref{prop:counting} implies that for all $ijs\in {[t]\choose 3}$ and $\alpha,\beta,\gamma\in [\ell]$, 
\begin{align}\label{align:1}
|K_3^{(2)}(G_{ijs}^{\alpha,\beta,\gamma})|=(1\pm \e'_2(\ell))(n/\ell t)^3.
\end{align}
We will use $\calP$ to construct a different decomposition of $V$, which will be called $\calQ$, so that $\calQ_1=\calP_1$ but $\calQ_2\neq \calP_2$. Set 
\begin{align*}
\mathbf{F}_{err}&=\{G_{ijs}^{\alpha,\beta,\gamma} \in \triads(\calP):(H_{ijs}^{\alpha,\beta,\gamma},G_{ijs}^{\alpha,\beta,\gamma}) \text{ fails }\disc_{3}(\e_1'',\e''_2(\ell))\}\\
\mathbf{F}_1&=\{G_{ijs}^{\alpha,\beta,\gamma} \in \triads(\calP)\setminus \Gamma_{err}: d_{ijs}^{\alpha,\beta,\gamma}\geq 1-f(\e_1'')\},\text{ and }\\
\mathbf{F}_0&=\{G_{ijs}^{\alpha,\beta,\gamma} \in \triads(\calP)\setminus \Gamma_{err}:  d_{ijs}^{\alpha,\beta,\gamma}\leq f(\e_1'')\}.
\end{align*}
By assumption, $\triads(\calP)=\mathbf{F}_{err}\sqcup \mathbf{F}_1\sqcup \mathbf{F}_0$, and at most $\e_1''n^3$ triples $xyz\in {V\choose 3}$ are in $K_3^{(2)}(G)$ for some $G\in \mathbf{F}_{err}$.  By (\ref{align:1}), this implies 
$$
|\triads(\calP)\setminus \mathbf{F}_{err}|\geq \Big({n\choose 3}-\e_1''n^3\Big)/((n^3/t^3\ell^3)(1-\e_2'(\ell))\geq {t\choose 3}\ell^3(1-\e_1'),
$$
where the last inequality uses that $t\geq t_0$ and $n$ is large.  Thus, $|\mathbf{F}_{err}|\leq  \e_1' t^3\ell^3$.  Let
\begin{align*}
\Psi&=\{V_iV_j: |\{G_{ijs}^{\alpha\beta\gamma}\in \mathbf{F}_{err}\text{ some }s\in [t]\text{ and }\alpha,\beta,\gamma\in [\ell]\}| \geq (\e_1')^{3/4}\ell^3 t \}.
\end{align*}
Since $|\mathbf{F}_{err}|\leq \e_1' t^3\ell^3$, we have that $|\Psi|\leq (\e_1')^{1/4} t^2$.  Given $ij\in {[t]\choose 2}$, let $\ell_{ij}$ be the number of $\alpha\in [\ell]$ such that $G_{ij}^{\alpha}$ has $\dev_2(\e_2''(\ell), 1/\ell)$.  After relabeling, we may assume $G_{ij}^1,\ldots, G_{ij}^{\ell_{ij}}$ each have $\dev_2(\e_2''(\ell),1/\ell)$.  We claim that for $V_iV_j\notin \Psi$, $\ell_{ij}\geq (1-2(\e_1')^{3/4})\ell$.  Indeed, given $V_iV_j\notin \Psi$, if it were the case that $\ell_{ij}<(1-2(\e_1')^{3/4})\ell$, then we would have that 
$$
|\{G_{ijs}^{\alpha\beta\gamma}\in \mathbf{F}_{err}\text{ some }s\in [t]\text{ and }\alpha,\beta,\gamma\in [\ell]\}| \geq (t-2)\ell^2(\ell-\ell_{ij})>2(\e_1')^{3/4}(t-2)\ell^3 \geq (\e_1')^{3/4}\ell^3 t,
$$
contradicting that $V_iV_j\notin \Psi$.  Thus we have that for all $V_iV_j\notin \Psi$, $\ell_{ij}\geq (1-2(\e_1')^{3/4})\ell$.

For each $V_iV_j\notin \Psi$, define $H_{ij}$ to be the edge colored graph $(U_{ij}\cup W_{ij},E^0_{ij},E_{ij}^1,E_{ij}^2)$, where 
\begin{align*}
W_{ij}&=\{P_{ij}^\alpha:\alpha\leq \ell_{ij} \}\text{ and }U_{ij}=\{P_{is}^\beta P_{js}^\gamma: s\in [t]\setminus\{i,j\}, \beta\leq \ell_{is},\gamma\leq \ell_{js}\},\text{ and }\\
E^1_{ij}&=\{P_{ij}^\alpha (P_{is}^\beta P_{js}^\gamma)\in K_2[W_{ij},U_{ij}]: G_{ijk}^{\alpha,\beta,\gamma}\in \mathbf{F}_1\}\\
E^0_{ij}&=\{P_{ij}^\alpha (P_{is}^\beta P_{js}^\gamma)\in K_2[W_{ij},U_{ij}]: G_{ijk}^{\alpha,\beta,\gamma}\in \mathbf{F}_0\}\\
E^2_{ij}&=\{P_{ij}^\alpha (P_{is}^\beta P_{js}^\gamma)\in K_2[W_{ij},U_{ij}]: G_{ijk}^{\alpha,\beta,\gamma}\in \mathbf{F}_{err}\}.
\end{align*}
By Proposition \ref{prop:suffvc2}, and since $f(\e_1'')<1/4$, $H_{ij}$ contains no $E_{ij}^1/E_{ij}^0$ copy of $U(k)$, and since $V_iV_j\notin \Psi$, $|E_{ij}^2|\leq (\e_1')^{3/4}\ell^3t$.  We will later need the following size estimates for $W_{ij}$ and $U_{ij}$.  By above, $|W_{ij}|=\ell_{ij}\geq (1-2(\e_1')^{3/4})\ell$.  We claim that $|U_{ij}|\geq (1-2(\e_1')^{3/4})\ell^2t$.  Indeed, observe that  $|U_{ij}|=\sum_{s\in [t]\setminus \{i,j\}}\ell_{is}\ell_{js}$ and 
\begin{align*}
|\{G_{ijs}^{\alpha\beta\gamma}\in \mathbf{F}_{err}\text{ some }s\in [t]\text{ and }\alpha,\beta,\gamma\in [\ell]\}|&\geq\sum_{s\in [t]\setminus \{i,j\}}\ell^2(\ell-\ell_{is})+\ell_{is}\ell(\ell-\ell_{js})\\
&= \sum_{s\in [t]\setminus \{i,j\}}\ell^3-\ell \ell_{is}\ell_{js}=(t-2)\ell^3-\ell |U_{ij}|.
\end{align*}
Since $V_iV_j\notin \Psi$ this shows that $(\e_1')^{3/4}\ell^3 t\geq (t-2)\ell^3-\ell |U_{ij}|$.  Rearranging, this yields that 
$$
|U_{ij}|\geq (t-2)\ell^2-(\e_1')^{3/4}\ell^2 t\geq t\ell^2(1-2(\e_1')^{3/4}),
$$
where the last inequality is because $t\geq t_0$.

Given $v,v'\in W_{ij}$, write $v\sim v'\in W_{ij}$ if for each $w\in \{0,1,2\}$, $|E_{ij}^w(v)\Delta E_{ij}^w(v')|\leq \delta |U_{ij}|$.  By Lemma \ref{lem:hausslercor}, there is $W_{ij}^0\subseteq W_{ij}$ of size at most $(\e_1')^{3/8}|W_{ij}|$, an integer $m_{ij}\leq m$, and $x_{ij}^1,\ldots, x_{ij}^{m_{ij}}\in W_{ij}$ so that for all $v\in W_{ij}\setminus W_{ij}^0$, there is $1\leq \alpha\leq m_{ij}$ so that $v\sim x_{ij}^{\alpha}$, and further, $|N_{E_{ij}^2}(v)|\leq (\e_1')^{3/8}|U_{ij}|$.  For each $1\leq u\leq m_{ij}$, let 
$$
W_{ij}^u=\{v\in W_{ij}\setminus W_{ij}^0: v\sim x_{ij}^u\text{ and for all $1\leq u'<u$}, v\nsim x_{ij}^{u'}\}.
$$
Note $W_{ij}^1\cup \ldots \cup W_{ij}^{m_{ij}}$ is a partition of $W_{ij}\setminus W_{ij}^0$.

We now define a series of sets to help us zero in on certain well behaved sets of triples.  First, define
\begin{align*}
\Omega_0&=\{ijs\in {[t]\choose 3}: V_iV_j, V_jV_s, V_iV_s\notin \Psi\}\text{ and }\\
\Omega&=\{W_{ij}^uW_{is}^vW_{js}^w: ijs\in \Omega_0, 1\leq u\leq m_{ij}, 1\leq v\leq m_{is}, 1\leq w\leq m_{js}\}.
\end{align*}
Since $|\Psi|\leq (\e_1')^{1/4} t^2$, $|\Omega_0|\geq {t\choose 3}-|\Psi|t\geq (1-6(\e_1')^{1/4}){t\choose 3}$.  Let $Y_0=\bigcup_{W_{ij}^uW_{is}^vW_{js}^w\in \Omega}K_3[W_{ij}^u,W_{is}^v,W_{js}^w]$.  We have that for all $ijs\in \Omega_0$, $|W_{ij}^0|,|W_{is}^0|,|W_{js}^0|\leq (\e_1')^{3/8}\ell$, and therefore $|Y_0|$ is at least the following.
\begin{align*}
|Y_0|\geq {t\choose 3}\ell^3-\ell^3|{[t]\choose 3}\setminus \Omega_0|-|\Omega_0|(\e_1')^{3/8}\ell^3&\geq {t\choose 3}\ell^3-6(\e_1')^{1/4}{t\choose 3}\ell^3-{t\choose 3}(\e_1')^{3/8}\ell^3\\
&\geq  {t\choose 3}\ell^3(1-(\e'_1)^{1/8}),
\end{align*}
where the last inequality is since $t\geq t_0$.

Given $ij\notin \Psi$, let us call $W_{ij}^u$ \emph{non-trivial} if it has size at least $\delta^{1/2}\ell /m_{ij}$.  Define
$$
\Omega_1=\{W_{ij}^uW_{js}^vW_{is}^w\in \Omega: \text{ each of }W_{ij}^u, W_{js}^v, W_{is}^w\text{ are non-trivial}\},
$$
and set $Y_1=\bigcup_{W_{ij}^uW_{js}^vW_{is}^w\in \Omega_1}K_3[W_{ij}^uW_{js}^vW_{is}^w]$.  Then we have that
$$
|Y_1|\geq |Y_0|-t\ell^2\sum_{ij\in {[t]\choose 2}}\sum_{\{u\in [m_{ij}]: W^u_{ij}\text{ trivial}\} }\delta^{1/2}(\ell/m_{ij})\geq |Y_0|-t\ell^2(t^2\delta^{1/2} \ell)= |Y_0|-\delta^{1/2} t^3\ell^3.
$$
Define
\begin{align*}
R_1&=\{P_{ij}^\alpha P_{is}^\beta P_{js}^\gamma: G_{ijk}^{\alpha,\beta,\gamma}\in \mathbf{F}_1\}\\
R_0&=\{P_{ij}^\alpha P_{is}^\beta P_{js}^\gamma: G_{ijk}^{\alpha,\beta,\gamma}\in \mathbf{F}_0\}\\
R_2&=\{P_{ij}^\alpha P_{is}^\beta P_{js}^\gamma: G_{ijk}^{\alpha,\beta,\gamma}\in \mathbf{F}_{err}\}
\end{align*}
Note that $(\calP_2\cup \calP_2\cup \calP_2, R_0,R_1,R_2)$ is a $3$-partite edge colored $3$-graph, and $|R_2|\leq \e_1't^3\ell^3$.  Now set 
$$
\Omega_2=\{W_{ij}^uW_{js}^vW_{is}^w\in \Omega_1: |R_2\cap K_3[W_{ij}^u,W_{is}^v,W_{js}^w]|\leq \sqrt{\e_1'} |W_{ij}^u||W_{is}^v||W_{js}^w|\}.
$$
and $Y_2=\bigcup_{W_{ij}^uW_{js}^vW_{is}^w\in \Omega_2}K_3[W_{ij}^uW_{js}^vW_{is}^w]$. Note that 
\begin{align*}
|R_2|&\geq \sum_{W_{ij}^uW_{js}^vW_{is}^w\in \Omega_1\setminus \Omega_2}\sqrt{\e_1'}|W_{ij}^u||W_{is}^v||W_{js}^w|\geq  \sqrt{\e_1'} \sum_{W_{ij}^uW_{js}^vW_{is}^w\in \Omega_1\setminus \Omega_2}|W_{ij}^u||W_{is}^v||W_{js}^w|.
\end{align*}
Therefore,
$$
\sum_{W_{ij}^uW_{js}^vW_{is}^w\in \Omega_1\setminus \Omega_2}|W_{ij}^u||W_{is}^v||W_{js}^w|\leq\sqrt{\e_1'}^{-1}|R_2|<\sqrt{\e_1'}^{-1}\e_1't^3\ell^3\leq \sqrt{\e_1'}t^3\ell^3.
$$
This implies that $|Y_2|\geq |Y_1|-\sqrt{\e_1'}t^3\ell^3$.

Given $ijs\in \Omega_0$, let us call a triple  $P_{ij}^{\alpha}P_{is}^{\beta}P_{js}^{\gamma}$ \emph{troublesome} if one of the following hold.
\begin{itemize}
\item For some $u\in [m_{ij}]$, $P_{ij}^{\alpha}\in W_{ij}^u$, and there are $\sigma_1\neq \sigma_2\in \{0,1,2\}$ such that $P_{is}^{\beta}P_{js}^{\gamma}P_{ij}^{\alpha}\in R^{\sigma_1}$ and $P_{is}^{\beta}P_{js}^{\gamma}x_{ij}^u\in R^{\sigma_1}$.  
\item For some $w\in [m_{js}]$, $P_{js}^{\gamma}\in W_{js}^w$, and there are $\sigma_1\neq \sigma_2\in \{0,1,2\}$ such that $P_{is}^{\beta}P_{ij}^{\alpha}P_{js}^{\gamma}\in R^{\sigma_1}$ and $P_{is}^{\beta}P_{ij}^{\alpha}x_{js}^w\in R^{\sigma_2}$.  
\item For some $v\in [m_{is}]$, $P_{is}^{\beta}\in W_{is}^v$, and there are $\sigma_1\neq \sigma_2\in \{0,1,2\}$ such that $P_{ij}^{\alpha}P_{js}^{\gamma}P_{is}^{\beta}\in R^{\sigma_1}$ and $P_{ij}^{\alpha}P_{js}^{\gamma}x_{is}^v\in R^{\sigma_2}$.  
\end{itemize}

Let $Tr$ be the set of troublesome triples.  Define
$$
\Omega_3=\{W_{ij}^uW_{js}^vW_{is}^w\in \Omega_2: |K_3[W_{ij}^uW_{js}^vW_{is}^w]\cap Tr|\leq \delta^{1/4}|W_{ij}^u||W_{js}^v||W_{is}^w|\},
$$
and set $Y_3=\bigcup_{W_{ij}^uW_{js}^vW_{is}^w\in \Omega_3}K_3[W_{ij}^uW_{js}^vW_{is}^w]$.  We claim $|Y_3|\geq {t\choose 3}\ell^3(1-2\delta^{1/2})$.  

Given $V_iV_j\notin \Psi$, $1\leq u\leq m_{ij}$, and $P_{ij}^{\alpha}\in W_{ij}^u$, we know that $P_{ij}^{\alpha}\sim x_{ij}^{\alpha}$, and therefore
\begin{align*}
|\{P_{is}^{\beta}P_{js}^{\gamma}: s\in [t]\setminus \{i,j\}, \beta,\gamma\leq \ell, P_{is}^{\beta}P_{js}^{\gamma}P_{ij}^{\alpha}\in Tr\}|&\leq (\ell^2(t-2)-|U_{ij}|)\\
&+\sum_{x=0}^2|N_{E_{ij}^x}(P_{ij}^{\alpha})\Delta N_{E_{ij}^x}(x_{ij}^u)|\\
&\leq 2(\e_1')^{3/4}\ell^2 t +3\delta t \ell^2\\
&\leq 4\delta t\ell^2.
\end{align*}
Thus, $|Tr|\leq 4\delta t\ell^2(\sum_{V_iV_j\notin \Psi, u\in [m_{ij}]}|W_{ij}^u|)\leq 4\delta t\ell^2 (t^2\ell)=4\delta t^3\ell^3$.  Therefore 
\begin{align*}
4\delta t^3\ell^3\geq |Tr|&\geq \sum_{W_{ij}^uW_{js}^vW_{is}^w\in \Omega_2\setminus \Omega_3}\delta^{1/4} |W_{ij}^u||W_{js}^v||W_{is}^w|=\delta^{1/4} |\bigcup_{W_{ij}^uW_{js}^vW_{is}^w\in \Omega_2\setminus \Omega_3}K_3[W_{ij}^uW_{js}^vW_{is}^w]|.
\end{align*}
Rearranging, this yields that
\begin{align*}
|\bigcup_{W_{ij}^uW_{js}^vW_{is}^w\in \Omega_2\setminus \Omega_3}K_3[W_{ij}^uW_{js}^vW_{is}^w]|\leq \delta^{-1/4}4\delta t^3\ell^3=4\delta^{3/4}t^3\ell^3.
\end{align*}
Thus 
\begin{align*}
|Y_3|\geq |Y_2|-\delta^{3/4}t^3\ell^3&\geq |Y_1|-\sqrt{\e_1'}t^3\ell^3-4\delta^{3/4}t^3\ell^3\\
&\geq |Y_0|-\delta^{1/2} t^3\ell^3-\sqrt{\e_1'}t^3\ell^3-4\delta^{3/4}t^3\ell^3\\
&\geq {t\choose 3}\ell^3(1-7(\e'_1)^{1/8})-\delta^{1/2} t^3\ell^3-\sqrt{\e_1'}t^3\ell^3-4\delta^{3/4}t^3\ell^3\\
&\geq {t\choose 3}\ell^3(1-2\delta^{1/2}).
\end{align*}
Therefore, using (\ref{align:1}), we have
\begin{align*}
|\bigcup_{P_{ij}^{\alpha}P_{is}^{\beta}P_{js}^{\gamma}\in Y_3} K_3^{(2)}(G_{ijs}^{\alpha,\beta,\gamma})|&\geq {t\choose 3}\ell^3(1-2\delta^{1/2})\Big(\frac{n^3}{t^3\ell^3}(1-\e_2'(\ell)\Big)\geq {n\choose 3}(1-3\delta^{1/2}),
\end{align*}
where the last inequality is because $n$ is large.

Our next goal is to prove Claim \ref{cl:hom} below, which says that each for $W_{ij}^uW_{is}^vW_{js}^w\in \Omega_3$, $K_3[W_{ij}^u,W_{is}^v,W_{js}^w]$ is either mostly contained in $R_1$ or mostly contained in $R_0$.  For the proof of this claim, we will require the following notation.  Given $ijs\in {[t]\choose 3}$, $\alpha,\alpha'\leq \ell$, $1\leq v\leq m_{is}$, and $1\leq w\leq m_{js}$, we write $P_{ij}^{\alpha}\sim_{js,vw} P_{ij}^{\alpha'}$ if $P_{ij}^{\alpha},P_{ij}^{\alpha'}\in W_{ij}^u$ for some $1\leq u\leq m_{ij}$, and 
\begin{align*}
|\{(P_{is}^{\beta},P_{js}^{\gamma})\in W_{is}^v\times W_{js}^w: \text{for some }\sigma_1\neq \sigma_2\in \{0,1,2\},  P_{is}^{\beta}P_{js}^{\gamma}P_{ij}^{\alpha}\in R^{\sigma_1}&\text{ and }P_{is}^{\beta}P_{js}^{\gamma}P_{ij}^{\alpha'}\in R^{\sigma_2}\}|\\
&\leq \delta^{1/8}|W_{is}^v||W_{js}^w|.
\end{align*}

\begin{claim}\label{cl:hom}
For any $W_{ij}^uW_{is}^vW_{js}^w\in \Omega_3$, there is $\sigma\in \{0,1\}$ such that 
$$
\frac{|R_{\sigma}\cap K_3[W_{ij}^u,W_{is}^v,W_{js}^{w}]|}{|K_3[W_{ij}^u,W_{is}^v,W_{js}^{w}]|}\geq 1-\delta^{1/100}.
$$
\end{claim}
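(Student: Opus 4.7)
The plan is to deduce the claim by a swap-path argument that converts the troublesome-triple bound from membership in $\Omega_3$ and the $R_2$ bound from membership in $\Omega_2$ into near-homogeneity on the box $K_3[W_{ij}^u,W_{is}^v,W_{js}^w]$. Write $N=|W_{ij}^u||W_{is}^v||W_{js}^w|$ and, for each $\sigma\in\{0,1,2\}$, set $n_{\sigma}=|R_{\sigma}\cap K_3[W_{ij}^u,W_{is}^v,W_{js}^w]|$. Membership of the box in $\Omega_2$ already gives $n_2\le\sqrt{\e_1'}\,N$, so $n_0+n_1\ge(1-\sqrt{\e_1'})N$; the remaining task is to bound the product $n_0 n_1$ from above.

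The central observation is that if $T=(P_{ij}^{\alpha},P_{is}^{\beta},P_{js}^{\gamma})$ and $T'=(P_{ij}^{\alpha'},P_{is}^{\beta'},P_{js}^{\gamma'})$ are two triples of the box belonging to distinct parts $R_{\sigma},R_{\sigma'}$, then at least one of the four ``corner'' triples
\[
T,\quad T_1:=(P_{ij}^{\alpha'},P_{is}^{\beta},P_{js}^{\gamma}),\quad T_2:=(P_{ij}^{\alpha'},P_{is}^{\beta'},P_{js}^{\gamma}),\quad T'
\]
must lie in $Tr$. Indeed, walk from $T$ to $T'$ along the six single-coordinate swaps
\[
T\to(x_{ij}^{u},\beta,\gamma)\to T_1\to(\alpha',x_{is}^{v},\gamma)\to T_2\to(\alpha',\beta',x_{js}^{w})\to T',
\]
in which each swap replaces one coordinate by its designated representative or vice versa. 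Each such swap is governed by the troublesome condition at the unique corner triple whose coordinate is being moved, using the elementary fact that the condition ``this swap changes the colour'' is a property of a single triple and is symmetric in swap direction. Hence if none of $T,T_1,T_2,T'$ is in $Tr$, all six swaps preserve $\sigma$, forcing $\sigma(T)=\sigma(T')$ and contradicting $\sigma\neq\sigma'$.

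For each $T^{\ast}\in Tr\cap K_3[W_{ij}^u,W_{is}^v,W_{js}^w]$ and each of the four corner positions, the number of pairs $(T,T')$ in which $T^{\ast}$ occupies that position is exactly $N$: three of the six coordinates are pinned by $T^{\ast}$ and the other three range over the full box. Thus the number of ordered pairs $(T,T')$ in the box with $\sigma(T)\neq\sigma(T')$ is at most
\[
4N\cdot\bigl|Tr\cap K_3[W_{ij}^u,W_{is}^v,W_{js}^w]\bigr|\le 4\delta^{1/4}N^2.
\]
Since this count also equals $2(n_0 n_1+n_0 n_2+n_1 n_2)$, in particular $n_0 n_1\le 2\delta^{1/4}N^2$. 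If both $n_0$ and $n_1$ exceeded $\delta^{1/100}N$ we would obtain $\delta^{1/50}\le 2\delta^{1/4}$, which fails for our tiny $\delta=\tau_1^{400}/1000$. Therefore $\min(n_0,n_1)\le\delta^{1/100}N$, and the bound $n_2\le\sqrt{\e_1'}\,N$ (vanishingly small compared to $\delta^{1/100}N$ since $\e_1'=(\delta/8c_1)^{2k+1000}$) yields $\max(n_0,n_1)\ge(1-\delta^{1/100})N$ after an innocuous tightening of constants.

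The main obstacle is the bookkeeping in the swap-path step: one must verify that each of the six swaps is controlled by a troublesome condition at exactly one of the four corner triples, with no need to invoke a troublesome condition at any of the three representative-laden intermediates $(x_{ij}^u,\beta,\gamma)$, $(\alpha',x_{is}^v,\gamma)$, $(\alpha',\beta',x_{js}^w)$. Routing the walk through all three representatives in succession is precisely what makes the four corner vertices sufficient to drive the contradiction.
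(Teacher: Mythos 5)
Your proof is correct, and it takes a genuinely different and more direct route than the paper's. The paper argues by contradiction through a long chain of carefully chosen auxiliary sets $A_1,\dots,A_4,A_B,A_C$, selects representative vertices $a_*$, $b_*$, invokes Corollary \ref{cor:twosticks} to split into cases, and finally exhibits two vertices $c_0,c_1$ whose $S_1$-neighbourhoods are simultaneously forced to be close (since both lie in $C^*$) and far apart (since $|N_{R_1}(c_1)\cap N_{R_0}(c_0)\cap B_3|$ is large), yielding a contradiction. Your approach instead treats the box $K_3[W_{ij}^u,W_{is}^v,W_{js}^w]$ as a $\{0,1,2\}$-coloured tripartite $3$-graph and counts bichromatic ordered pairs directly: the six-swap walk $T\to(x_{ij}^u,\beta,\gamma)\to T_1\to(\alpha',x_{is}^v,\gamma)\to T_2\to(\alpha',\beta',x_{js}^w)\to T'$ routes each of the six coordinate moves through the troublesome condition (one of the three bullets) at exactly one of the four corner triples $T,T_1,T_2,T'$, so any bichromatic pair sees a troublesome corner; the $4N\cdot|Tr\cap K_3[\cdots]|$ count then bounds $2(n_0n_1+n_0n_2+n_1n_2)$ by $4\delta^{1/4}N^2$, giving $n_0n_1\le 2\delta^{1/4}N^2$. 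This is cleaner, avoids Corollary \ref{cor:twosticks} entirely, and yields a stronger quantitative statement. The one loose end you correctly flag --- that $n_0+n_1\ge(1-\sqrt{\e_1'})N$ together with $\min(n_0,n_1)\le\delta^{1/100}N$ only gives $\max(n_0,n_1)\ge(1-\delta^{1/100}-\sqrt{\e_1'})N$ --- is indeed innocuous: use $\min(n_0,n_1)^2\le n_0n_1\le 2\delta^{1/4}N^2$ to get $\min(n_0,n_1)\le\sqrt{2}\,\delta^{1/8}N$, and then $\sqrt{2}\,\delta^{1/8}+\sqrt{\e_1'}\le\delta^{1/100}$ holds comfortably for the concrete choices of $\delta$ and $\e_1'$ in the proof.
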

\begin{proof}
Suppose towards a contradiction there is $W_{ij}^uW_{is}^vW_{js}^w\in \Omega_3$ such that for each $\sigma\in \{0,1\}$, $\frac{|R_{\sigma}\cap K_3[W_{ij}^u,W_{is}^v,W_{js}^{w}]|}{|K_3[W_{ij}^u,W_{is}^v,W_{js}^{w}]||}<1-\delta^{1/100}$.   To ease notation, let $A=W_{ij}^u$, $B=W_{is}^v$, and $C=W_{js}^w$.  

We now define a series of subsets of $A$ which will contain ``well behaved'' vertices.  First, we set $A_1=\{a\in A: a\sim_{js,vw} x_{ij}^u\}$.  Since $W_{ij}^uW_{is}^vW_{js}^w\in \Omega_3$,
$$
\delta^{1/4}|W_{ij}^u||W_{is}^v||W_{js}^w|\geq |Tr\cap K_3[W_{ij}^uW_{is}^vW_{js}^w]|\geq |A\setminus A_1|\delta^{1/8}|W_{is}^v||W_{js}^w|,
$$
Thus $|A\setminus A_1|\leq \delta^{-1/8}\delta^{1/4}|W_{ij}^u|= \delta^{1/8}|W_{ij}^u|$.  Now set 
$$
A_2=\{a\in A: |N_{R_2}(a)| \leq (\e_1')^{1/4} |B||C|\}.
$$
Because $W_{ij}^uW_{is}^vW_{js}^w\in \Omega_2$, we have that
$$
(\e_1')^{1/2}|A||B||C|\geq |R_2\cap K_3[A,B,C]|\geq |A\setminus A_2|(\e_1')^{1/4}|B||C|.
$$
Therefore, $|A\setminus A_2|\leq (\e_1')^{1/4}|A|$. Now set 
\begin{align*}
A_3&=\{a\in A: |N_{R_1}(a)|/|B||C|\in (\delta^{1/64},1-\delta^{1/64})\}\text{ and }\\
A_3'&=\{a\in A: |N_{R_1}(a)|/|B||C|\in (\delta^{1/128},1-\delta^{1/128})\}.
\end{align*}
We claim $x_{ij}^u\in A_3'$.  Suppose towards a contradiction that $x_{ij}^u\notin A'_3$.  Suppose first that $|N_{R_1}(x_{ij}^u)|\geq (1-\delta^{1/128})|B||C|$.  Then for all $a\in A_1$, since $a\sim_{js,vw}x_{ij}^u$, we have $|N_{R_1}(a)|\geq (1-\delta^{1/128}-\delta^{1/8})|B||C|$, and thus,
\begin{align*}
|R_1\cap K_3[W_{ij}^uW_{is}^vW_{js}^{w}]|\geq (1-\delta^{1/128}-\delta^{1/8})|A_1||B||C|&\geq (1-\delta^{1/128}-\delta^{1/8})(1-\delta^{1/8})|A||B||C|\\
&\geq (1-\delta^{1/100})|A||B||C|,
\end{align*}
a contradiction.  So we must have $|N_{R_1}(x_{ij}^u)|\leq \delta^{1/128} |B||C|$.  Then for all $a\in A_1\cap A_2$,  $a\sim_{js,vw}x_{ij}^u$ and $|N_{R_2}(a)|\leq (\e_1')^{1/4}|B||C|$ implies
$$
|N_{R_0}(a)|\geq (1-\delta^{1/128}-\delta^{1/8}-(\e_1')^{1/4})|B||C|.
$$
Therefore
\begin{align*}
|R_0\cap K_3[W_{ij}^uW_{is}^vW_{js}^{w}]|&\geq (1-\delta^{1/128}-\delta^{1/8}-(\e_1')^{1/4})|A_1\cap A_2||B||C|\\
&\geq (1-\delta^{1/128}-\delta^{1/8}-(\e_1')^{1/4})(1-\delta^{1/8}-(\e_1')^{1/4})|A||B||C|\\
&\geq (1-\delta^{1/100})|A||B||C|,
\end{align*}
again a contradiction.  Thus, we must have that $x_{ij}^u\in A'_3$.  This implies that for all $a\in A_1\cap A_2$, 
$$
|N_{R_1}(a)|\geq |N_{R_1}(x_{ij}^u)|-|N_{R_1}(x_{ij}^u)\Delta N_{R_1}(a)|\geq \delta^{1/128}|B||C|(1-\delta^{1/8})\geq \delta^{1/64}|B||C|, 
$$
and
\begin{align*}
|N_{R_0}(a)|\geq |N_{R_0}(x_{ij}^u)|- |N_{R_0}(a)|-|N_{R_0}(x_{ij}^u)\Delta N_{R_0}(a)|&\geq \delta^{1/128}|B||C|(1-\delta^{1/8}-(\e_1')^{1/4})\\
&\geq \delta^{1/64}|B||C|.
\end{align*}
Thus $a\in A_3$.  This shows that $A_1\cap A_2\subseteq A_3$, and therefore $|A_3|\geq |A|(1-\delta^{1/8}-(\e_1')^{1/4})$.  Now define

\begin{align*}
A_B&=\{a\in A: |\{b\in B: |N_{R_1}(ab)\Delta N_{R_1}(ax_{is}^v)|\leq \delta^{1/16}|C|\}|\geq (1-\delta^{1/16})|B|\}\text{ and }\\
A_C&=\{a\in A: |\{c\in C: |N_{R_1}(ac)\Delta N_{R_1}(ax_{js}^w)|\leq \delta^{1/16}|B|\}|\geq (1-\delta^{1/16})|C|\}.
\end{align*}
Observe that $4\delta^{1/4}|A||B||C|\geq |Tr\cap K_3[A,B,C]|\geq \delta^{1/16} |A\setminus A_B||B||C|$, and therefore $|A\setminus A_B|\leq \delta^{-1/16}4\delta^{1/4}|A|=4\delta^{3/16}|A|$.  A similar computation shows $|A\setminus A_C|\leq 4\delta^{3/16}|A|$.  Consequently, setting $A_4:=A_3\cap A_B\cap A_C$, we have that 
$$
|A_4|\geq |A_3|-|A\setminus A_C|-|A\setminus A_B|\geq |A|(1-8\delta^{3/16}- (\e_1')^{1/4}-\delta^{1/8})>0.
$$
Fix some $a_*\in A_4$.  We will use $a_*$ to control the other edges in the triple.  Let
\begin{align*}
S_1&=N_{R_1}(a^*), \text{ }S_0=N_{R_0}(a^*),\text{ and }S_{2}=N_{R_2}(a^*).
\end{align*}
Note $(B\cup C, S_0\cup S_1\cup S_2)$ is a $3$-partite edge colored $3$-graph. Since $a_*\in A_3$, $|S_1|/|B||C|\in (\delta^{1/64}, 1-\delta^{1/64})$.  Therefore, Corollary \ref{cor:twosticks} implies that one of the following hold.
\begin{enumerate}[(a)]
\item there is $B_1\subseteq B$ such that $|B_1|\geq \delta^{1/32}|B|/2$ and for all $b\in B_1$, $|N_{S_1}(b)|/|C|\in (\delta^{1/32}/2, 1-\delta^{1/32}/2)$.
\item  there is $C_1\subseteq C$ such that $|C_1|\geq \delta^{1/32}|C|/2$ and for all $c\in C_1$, $|N_{S_1}(c)|/|B|\in (\delta^{1/32}/2, 1-\delta^{1/32}/2)$.
\end{enumerate}
Without loss of generality, let us assume (a) holds (other case is symmetric).  Define $B_2=\{b\in B_1: |N_{S_2}(b)|\leq (\e_1')^{1/16}|C|\}$.  We claim $|B_2|\geq  \delta^{1/32}|B|/4$.  Indeed, we know that since $a_*\in A_2$, 
$$
(\e_1')^{1/4}|B||C|\geq |S_2|\geq (\e_1')^{1/16}|B_1\setminus B_2||C|.
$$
Thus, $|B_1\setminus B_2|\leq (\e_1')^{-1/16}(\e_1')^{1/4}|B|=(\e_1')^{1/12}|B|$, so 
$$
|B_2|\geq |B_1|-(\e_1')^{1/12}|B|\geq (\delta^{1/32}/2-(\e_1')^{1/12})|B|\geq \delta^{1/32}|B|/4.
$$
Note that for all $b\in B_2$, we have that $|N_{S_1}(b)|\geq \delta^{1/32}|C|/2\geq  \delta^{1/32}|C|/4$ and 
$$
|N_{S_0}(b)|\geq |C\setminus N_{S_1}(b)|-|N_{S_2}(b)|\geq (\delta^{1/32}/2-(\e_1')^{1/16})|C|\geq \delta^{1/32}|C|/4.
$$

Now, let 
$$
B_3=\{b\in B_2: |N_{S_1}(b)\Delta N_{S_1}(x_{is}^v)|\leq \delta^{1/16}|C|\}.
$$
Since $a_*\in A_B$, 
$$
|B_3|\geq |B_2|-\delta^{1/16}|B|\geq  (\delta^{1/32}/4-\delta^{1/16})|B|\geq \delta^{1/32}|B|/8>0.
$$
  Fix some $b_*\in B_3$ and set $Q_0=N_{S_0}(b_*)$ and $Q_1=N_{S_1}(b_*)$.  By above, since $b_*\in B_2$, $\min\{|Q_1|,|Q_0|\}\geq \delta^{1/32}|C|/4$.

We claim $|S_1\cap K_2[B_3, Q_1]|\geq (1-10\delta^{1/32})|Q_1||B_3|$.  Indeed, fix $b\in B_3$.  Then we know that  $|N_{S_1}(b)\Delta N_{S_1}(x_{is}^v)|\leq \delta^{1/16}|C|$ and $|N_{S_1}(b_*)\Delta N_{S_1}(x_{is}^v)|\leq \delta^{1/16} |C|$, and therefore $|N_{S_1}(b)\Delta N_{S_1}(b_*)|\leq 2\delta^{1/16} |C|$.  Consequently,
\begin{align*}
|N_{S_1}(b)\cap Q_1|\geq |Q_1|-2\delta^{1/16} |C|\geq |Q_1|(1-2\delta^{1/16} (|C|/|Q_1|)&\geq |Q_1|(1-2\delta^{1/16} (4\delta^{-1/32}))\\
&\geq |Q_1|(1-10\delta ^{1/32}).
\end{align*}  
This shows that $|S_1\cap K_2[B_3, Q_1]|\geq (1-10\delta^{1/32})|Q_1||B_3|$.

Similarly, we claim $|S_0\cap K_2[B_3,Q_0]|\geq (1-10\delta^{1/32})|B_3||Q_0|$.  Indeed, for all $b\in B_3$, we have $|N_{S_2}(b)|\leq (\e_1')^{1/16}|C|$ and, as above, $|N_{S_1}(b)\Delta N_{S_1}(b_*)|\leq 2\delta^{1/16} |C|$.  Thus $|N_{S_0}(b)\Delta N_{S_0}(b_*)|\leq ((\e_1')^{1/16} +2\delta^{1/16})|C|$.  Therefore
\begin{align*}
|N_{S_0}(b)\cap Q_0|\geq |Q_0|-((\e_1')^{1/4}+2\delta^{1/16}) |C|&\geq |Q_0|(1-((\e_1')^{1/4}+2\delta^{1/16})  (|C|/|Q_0|)\\
&\geq |Q_0|(1-((\e_1')^{1/4}+2\delta^{1/16}) 4\delta^{-1/32} )\\
&\geq |Q_1|(1-10\delta^{1/32}),
\end{align*} 
where the last inequality uses the definition of $\e_1'$.  This shows that  $|S_0\cap K_2[B_3,Q_0]|\geq (1-10\delta^{1/32})|B_3||Q_0|$.  Now let 
\begin{align*}
Q_1'&=\{c\in Q_1: |N_{S_1}(c) \cap B_3|\geq (1-\sqrt{10}\delta^{1/64})|B_3|\}\text{ and }\\
Q_0'&=\{c\in Q_0: |N_{S_0}(c) \cap B_3|\geq (1-\sqrt{10}\delta^{1/64})|B_3|\}.
\end{align*}
Since both $|S_1\cap K_2[B_3, Q_1]|\geq (1-10\delta^{1/32})|Q_1||B_3|$ and $|S_0\cap K_2[B_3,Q_0]|\geq (1-10\delta^{1/32})|B_3||Q_0|$, we have that $|Q_1'|\geq (1-\sqrt{10}\delta^{1/64})|Q_1|$ and $|Q_0'|\geq (1-\sqrt{10}\delta^{1/64})|Q_0|$. Finally, let 
$$
C^*=\{c\in C: |N_{S_1}(c)\Delta N_{S_1}(x_{js}^w)|\leq \delta^{1/16}|B|\}.
$$
Since $a_*\in A_C$, $|C^*|\geq (1-\delta^{1/16})|C|$.  Thus, 
$$
|Q_1'\cap C^*|\geq (1-\sqrt{10}\delta^{1/64})|Q_1|-\delta^{1/16}|C|\geq ((1-\sqrt{10}\delta^{1/64})\delta^{1/32}/4- \delta^{1/16}|C|\geq \delta^{1/32}|C|/10.
$$
Similarly, 
$$
|Q_0'\cap C^*|\geq (1-\sqrt{10}\delta^{1/64})|Q_1|-\delta^{1/16}|C|\geq ((1-\sqrt{10}\delta^{1/64})\delta^{1/32}/4- \delta^{1/16}|C|\geq \delta^{1/32}|C|/10.
$$
Consequently, there are $c_1\in Q'_1\cap C^*$ and $c_0\in Q'_0\cap C^*$.  Since $c_0,c_1\in C^*$, we can see that $|N_{S_1}(c_1)\Delta N_{S_1}(c_0)|\leq 2\delta^{1/16}|B|$. However, we also have that 
$$
|N_{R_1}(c_1)\cap N_{R_0}(c_0)\cap B_3|\geq (1-2\sqrt{10}\delta^{1/64})|B_3|\geq (1-2\sqrt{10}\delta^{1/64})\delta^{1/32}|B|/8>2\delta^{1/16}|B|.
$$
But this is a contradiction, since $N_{S_1}(c)\cap N_{S_0}(c_0)\cap B_3\subseteq  N_{S_1}(c_1)\Delta N_{S_1}(c_0)$.
\end{proof}

Let $\ell_1=\lceil \delta^{-4}m^4\rceil$.   Suppose $V_iV_j\notin \Psi$ and $1\leq u\leq \ell_{ij}$ is such that $W_{ij}^u$ is nontrivial.  Define $\mathbf{W}_{ij}^u=\bigcup_{P_{ij}^{\alpha}\in W_{ij}^u}P_{ij}^u$, let $\mathbf{G}_{ij}^u$ be the bipartite graph $(V_i\cup V_j, \mathbf{W}_{ij}^u)$, and define $\rho_{ij}(u)=\frac{|\mathbf{W}_{ij}^u|}{|V_i||V_j|}$.  By Fact \ref{fact:adding}, $\textbf{G}_{ij}^u$ has $\dev_2(\ell(\e_2''(\ell))^{1/4})$ and $|\mathbf{W}_{ij}^u|=(1\pm \ell(\e_2''(\ell))^{1/4})\frac{|W_{ij}^u||V_i||V_j|}{\ell}$.  Using the size estimate above and the fact that $W_{ij}^u$ is non-trivial, we have
$$
\rho_{ij}(u)=(1\pm \ell(\e_2''(\ell))^{1/4})|W_{ij}^u|/\ell\geq (1\pm \ell(\e_2''(\ell))^{1/4})\delta^{1/2}/\ell\geq 2\ell(\e_2''(\ell))^{1/4},
$$
where the last inequality is by our choice of $\e_2''(\ell)$.   Now set $p_{ij}(u)=\rho_{ij}(u)^{-1}/\ell_1$, and let $s_{ij}(u)=[1/p_{ij}(u)]$.  Observe that $\rho_{ij}p_{ij}=\frac{1}{\ell_1}$.  Note $(\e_2''(\ell))^{1/4}\geq 10(1/s|V_i|)^{1/5}$ (since $n$ is very large), and since $W_{ij}^u$ is non-trivial and $\ell(\e_2''(\ell))^{1/4}<1/4$, 
$$
\rho_{ij}(u)\geq (1\pm \ell(\e_2''(\ell))^{1/4})|W_{ij}^u|/\ell\geq \delta^{1/2} (\ell/m_{ij})/\ell=\delta^{1/2}/m_{ij}\geq \delta^{1/2}/m.
$$
Further, $0<p_{ij}(u)<\rho_{ij}(u)/2$ since 
$$
p_{ij}(u)\leq (1\pm \ell (\e_2''(\ell))^{1/4})^{-1}m\delta^{-1/2}\ell_1^{-1}\leq m\delta^{-1/2}\delta^{4}/m^4\leq\delta^{3/2}/m^3< \rho_{ij}(u)/2,
$$
where the last inequality uses that $\rho_{ij}(u)\geq \delta^{1/2}/m$.  Thus by Lemma \ref{lem:3.8}, there is a partition 
$$
\mathbf{W}_{ij}^u=\mathbf{W}_{ij}^u(0)\cup \ldots \cup \mathbf{W}_{ij}^u(s_{ij}(u)),
$$
so that $|\mathbf{W}_{ij}^u(0)|\leq \rho_{ij}p_{ij}(1+\e_1')|V_i||V_j|$ and for each $1\leq x\leq s_{ij}(u)$, the bipartite graph $\mathbf{G}_{ij}^u(x):=(V_i\cup V_j, \mathbf{W}_{ij}^u(x))$ has $\dev_2(\ell(\e_2''(\ell))^{1/4}, \rho_{ij}p_{ij})$, i.e. $\dev_2(\ell(\e_2''(\ell))^{1/4}, 1/\ell_1)$.  Since $(\e_2''(\ell))^{1/4}m<\e_2(\ell_1)$, and by definition of $\e_2''$, we have that for each $1\leq x\leq s_{ij}(u)$, $\mathbf{G}_{ij}^u(x)$ has $\dev_2(\e_2(\ell_1), 1/\ell)$.  Let $s_{ij}=\sum_{1\leq u\leq m_{ij}}s_{ij}(u)$. Give a re-enumeration 
$$
\{X_{ij}^1,\ldots, X_{ij}^{s_{ij}}\}=\{\mathbf{W}_{ij}^u(v): 1\leq v\leq s_{ij}(u), 1\leq u\leq m_{ij}\}.
$$
Then let $X_{ij}^{s_{ij}+1},\ldots, X_{ij}^{\ell_1}$ be any partition of $K_2[V_i,V_j]\setminus \bigcup_{x=1}^{s_{ij}}X_{ij}^x$.  

For $V_iV_j\in \Psi$ choose a partition $K_2[V_i,V_j]=X_{ij}^1\cup \ldots \cup X_{ij}^{\ell_1}$ such that for each $1\leq x\leq \ell_1$, $X_{ij}^{\ell_1}$ has $\dev_2( \e_2 (\ell_1), 1/\ell_1)$ (such a partition exists by Lemma \ref{lem:3.8}).  Now define $\calQ$ to be the decomposition of $V$ with $\calQ_1=\{V_i: i\in [t]\}$ and $\calQ_2=\{X_{ij}^v: v\leq \ell_1, ij\in {[t]\choose 2}\}$.  We claim this is a $(t,\ell_1, \e_1,\e_2(\ell_1))$-decomposition of $V$. Indeed, by construction, any $xy\in {V\choose 2}$ which is not in an element of $\calQ_2$ satisfying $\disc_2(\e_2(\ell_1),1/\ell_1)$ is in the set
$$
\Gamma:=\bigcup_{V_iV_j\notin \Psi}X_{ij}^{s_{ij}+1}\cup \ldots \cup X_{ij}^{\ell_1}.
$$
Observe that 
\begin{align}\label{align:3}
|\Gamma|\leq \sum_{V_iV_j\notin \Psi}\sum_{u=1}^{m_{ij}}|\mathbf{W}_{ij}^u(0)|+|K_2[V_i,V_j]\setminus (\bigcup_{P_{ij}^{\alpha}\in W_{ij}}P_{ij}^{\alpha})|.
\end{align}
We have that
\begin{align*}
\sum_{V_iV_j\notin \Psi}\sum_{u=1}^{m_{ij}}|\mathbf{W}_{ij}^u(0)|\leq \sum_{V_iV_j\notin \Psi}m_{ij}(1+\e_1')\rho_{ij}p_{ij}|V_i||V_j|&\leq {t\choose 2}m(1+2\e_1')(n/t)^2/\ell_1\\
&=\delta^4 {t\choose 2}(1+2\e_1')(n/t)^2/m^3\\
&\leq 2\delta^2{n\choose 2}/m^3,
\end{align*}
where the last inequality is because $n$ is large.  Then, by definition of $\delta$ and $m$, this shows that $\sum_{V_iV_j\notin \Psi}\sum_{u=1}^{m_{ij}}|\mathbf{W}_{ij}^u(0)|\leq \e_1 {n\choose 2}/2$.  We also have that 
\begin{align*}
\sum_{V_iV_j\notin \Psi} |K_2[V_i,V_j]\setminus (\bigcup_{P_{ij}^{\alpha}\in W_{ij}}P_{ij}^{\alpha})|&\leq \sum_{V_iV_j\notin \Psi}(|V_i||V_j|-|W_{ij}|(1+\e_2'(\ell))\frac{|V_i||V_j|}{\ell})\\
&=\sum_{V_iV_j\notin \Psi} |V_i||V_j|(1-\ell_{ij}(1+\e_2'(\ell))/\ell)\\
&\leq \sum_{V_iV_j\notin \Psi}|V_i||V_j|(1-(1-2(\e_1')^{3/4})(1+\e_2'(\ell)))\\
&\leq \sum_{V_iV_j\notin \Psi}|V_i||V_j|(\e_1')^{1/8}\\
&\leq (\e_1')^{1/8}{n\choose 2}.
\end{align*}
Combining these with (\ref{align:3}) yields that $|\Gamma|\leq \e_1{n\choose 2}/2+(\e_1')^{1/8}{n\choose 2}\leq \e_1{n\choose 2}$, and therefore, $\calQ$ is a $(t,\ell, \e_1,\e_2(\ell))$-decomposition of $V$.

We now show that $\calQ$ is $\e_1/6$-homogeneous with respect to $H$.  We will first show that for any $W_{ij}^uW_{is}^vW_{js}^{w}\in \Omega_3$, $\mathbf{G}_{ijs}^{uvw}:=(V_i\cup V_j\cup V_s, \mathbf{W}_{ij}^{u}\cup \mathbf{W}_{is}^v\cup \mathbf{W}_{js}^w)$ is $2\delta^{1/100}$-homogeneous with respect to $H$, and second,  almost all $xyz\in K_3^{(2)}(\mathbf{G}_{ijs}^{uvw})$ are in an $\e_1/6$-homogenous triad of $\calQ$.

Fix $W_{ij}^uW_{is}^vW_{js}^{w}\in \Omega_3$.  We know by Claim \ref{cl:hom}, that there is $\sigma\in \{0,1\}$ such that 
$$
|R_{\sigma}\cap K_3[W_{ij}^u,W_{is}^v,W_{js}^{w}]|\geq (1-\delta^{1/100})|K_3[W_{ij}^u,W_{is}^v,W_{js}^{w}]|.
$$
This implies, by (\ref{align:1}), and definition of $R_{\sigma}$ that the following holds, where $E^1=E$ and $E^0={V\choose 3}\setminus E^1$ (recall $E=E(H)$).
\begin{align*}
|E^{\sigma}\cap K_3^{(2)}(\mathbf{G}_{ijs}^{uvw})|&\geq (1-\delta^{1/100})(1-\e_1'')|K_3[W_{ij}^u,W_{is}^v,W_{js}^{w}]|(1-\ell^3\e_2'(\ell))|V_i||V_j||V_s|\frac{1}{\ell^3}\\
&=(1-\delta^{1/100})(1-\e_1'')(1-\ell^3\e_2'(\ell))\frac{|W_{ij}^u||W_{is}^v||W_{js}^w|}{\ell^3}.
\end{align*}
On the other hand, note that by (\ref{align:1}),
\begin{align*}
|K_3^{(2)}(\mathbf{G}_{ijs}^{uvw})|=|W_{ij}^u||W_{is}^v||W_{js}^w|(1\pm \ell^3\e_2'(\ell))\frac{|V_i||V_j||V_s|}{\ell^3}.
\end{align*}
Combining this with the above, we see that 
\begin{align*}
|E^{\sigma}\cap K_3^{(2)}(\mathbf{G}_{ijs}^{uvw})|&\geq (1-\delta^{1/100})(1-\e_1'')(1-\ell^3\e_2'(\ell))(1+\ell^3\e_2'(\ell))^{-1}|K_3^{(2)}(\mathbf{G}_{ijs}^{uvw})|\\
&\geq (1-2\delta^{1/100})|K_3^{(2)}(\mathbf{G}_{ijs}^{uvw})|,
\end{align*}
where the last inequality is by definition of $\e_2'$ and $\e_1''$.  This shows $\mathbf{G}_{ijs}^{uvw}$ is $2\delta^{1/100}$-homogeneous.  We now show that almost all $xyz\in K_3^{(2)}(\mathbf{G}_{ijs}^{uvw})$ are in an $\e_1/6$-homogeneous triad of $\calQ$.  Set 
$$
\Sigma_0(ijs,uvw)=\{0,\ldots, s_{ij}(u)\}\times \{0,\ldots, s_{is}(v)\}\times \{0,\ldots, s_{js}(w)\}.
$$
Given $(x,y,z)\in \Sigma_0$, set 
$$
\mathbf{G}_{ijs}^{uvw}(x,y,z)=(V_i\cup V_j\cup V_s; \mathbf{W}_{ij}^u(x)\cup  \mathbf{W}_{is}^v(y)\cup  \mathbf{W}_{js}^w(z)).
$$
Note that $K_3^{(2)}(\mathbf{G}_{ijs}^{uvw})=\bigcup_{(x,y,z)\in \Sigma_0(ijs,uvw)}K_3^{(2)}(\mathbf{G}_{ijs}^{uvw}(x,y,z))$.  Define 
$$
\Sigma_1(ijs,uvw)=\{ (x,y,z)\in \{0,\ldots, s_{ij}(u)\}\times \{0,\ldots, s_{is}(v)\}\times \{0,\ldots, s_{js}(w)\}: x,y\text{ or }z\text{ is }0\},
$$
 and set $\Sigma_2(ijs,uvw)=\Sigma_0(ijs,uvw)\setminus \Sigma_1(ijs,uvw)$. Note that by construction, for all $(x,y,z)\in \Sigma_2(ijs,uvw)$, $\mathbf{G}_{ijs}^{uvw}(x,y,z)\in \triads(\calQ)$.  Oberve that
\begin{align*}
\sum_{(x,y,z)\in \Sigma_1(ijs,uvw)}|K_3^{(2)}(\mathbf{G}_{ijs}^{uvw}(x,y,z)|&\leq |\mathbf{W}_{ij}^u(0)||V_s|+|\mathbf{W}_{is}^v(0)||V_j|+|\mathbf{W}_{js}^w(0)||V_i|\\
&\leq (1+\e_1')|V_i||V_j||V_s|(\rho_{ij}p_{ij}+\rho_{is}p_{is}+\rho_{js}p_{js})\\
&= 3(1+\e_1')|V_i||V_j||V_s|/\ell_1\\
&\leq 3(1+\e_1')\delta^4|V_i||V_j||V_s|m^{-4}\\
&\leq 3(1+\e_1')\delta^4(|W_{ij}^u||W_{is}^v||W_{js}^w|/\ell^3)^{-1}m^{-4}|K_3^{(2)}(\mathbf{G}_{ijs}^{uvw})|\\
&\leq 3(1+\e_1')\delta^4(\delta^{1/2}/m)^{-3}m^{-4}|K_3^{(2)}(\mathbf{G}_{ijs}^{uvw})|\\
&=3(1+\e_1')\delta^{1/2}m^{-1}|K_3^{(2)}(\mathbf{G}_{ijs}^{uvw})|\\
&<\delta|K_3^{(2)}(\mathbf{G}_{ijs}^{uvw})|,
\end{align*}
where the last inequality uses the definition of $m$. Let $\Sigma_3(ijs,uvw)$ be the set of $(x,y,z)\in \Sigma_2(ijs,uvw)$ such that
$$
 |E^{\sigma}\cap K_3^{(2)}(\mathbf{G}_{ijs}^{uvw}(x,y,z))|<(1-\delta^{1/200})|K_3^{(2)}(\mathbf{G}_{ijs}^{uvw}(x,y,z))|,
$$
 and set $\Sigma_4(ijs,uvw)=\Sigma_2(ijs,uvw)\setminus \Sigma_3(ijs,uvw)$.  By definition, and since $\delta^{1/200}<\e_1/6$, every triad of the form $K_3^{(2)}(\mathbf{G}_{ijs}^{uvw}(x,y,z))$ for $(x,y,z)\in \Sigma_4(ijs,uvw)$ is in a $\e_1/6$-homogeneous triad of $\calQ$.  We now show that $\bigcup_{(x,y,z)\in \Sigma_4(ijs,uvw)}K_3^{(2)}(\mathbf{G}_{ijs}^{uvw})$ is most of $K_3^{(2)}(\mathbf{G}_{ijs}^{uvw})$.  Observe
\begin{align*}
|E^{\sigma}\cap K_3^{(2)}(\mathbf{G}_{ijs}^{uvw}))|&\leq \sum_{(x,y,z)\in \Sigma_1(ijs,uvw)}|K_3^{(2)}(\mathbf{G}_{ijs}^{uvw}(x,y,z))|\\
&+(1-\delta^{1/200})\sum_{(x,y,z)\in \Sigma_3(ijs,uvw)}|K_3^{(2)}(\mathbf{G}_{ijs}^{uvw}(x,y,z))|\\
&+\sum_{(x,y,z)\in \Sigma_4(ijs,uvw)}|K_3^{(2)}(\mathbf{G}_{ijs}^{uvw}(x,y,z))|\\
&\leq \delta|K_3^{(2)}(\mathbf{G}_{ijs}^{uvw})|+(1-\delta^{1/200})\sum_{(x,y,z)\in \Sigma_3(ijs,uvw)}|K_3^{(2)}(\mathbf{G}_{ijs}^{uvw}(x,y,z))|+\\
&\sum_{(x,y,z)\in \Sigma_4(ijs,uvw)}|K_3^{(2)}(\mathbf{G}_{ijs}^{uvw}(x,y,z))|.
\end{align*}
Thus since $|E^{\sigma}\cap K_3^{(2)}(\mathbf{G}_{ijs}^{uvw})|\geq (1-2\delta^{1/100})|K_3^{(2)}(\mathbf{G}_{ijs}^{uvw})|$,
\begin{align*}
(1-2\delta^{1/100}-\delta)|K_3^{(2)}(\mathbf{G}_{ijs}^{uvw}))|&\leq (1-\delta^{1/200})\sum_{(x,y,z)\in \Sigma_3(ijs,uvw)}|K_3^{(2)}(\mathbf{G}_{ijs}^{uvw}(x,y,z))|\\
&+\sum_{(x,y,z)\in \Sigma_4(ijs,uvw)}|K_3^{(2)}(\mathbf{G}_{ijs}^{uvw}(x,y,z))|\\
&=\sum_{(x,y,z)\in \Sigma_2(ijs,uvw)}|K_3^{(2)}(\mathbf{G}_{ijs}^{uvw}(x,y,z))|\\
&-\delta^{1/200}\sum_{(x,y,z)\in \Sigma_3(ijs,uvw)}|K_3^{(2)}(\mathbf{G}_{ijs}^{uvw}(x,y,z))|.
\end{align*}
Rearrangine this, we have the following upper bound for $\sum_{(x,y,z)\in \Sigma_3(ijs,uvw)}|K_3^{(2)}(\mathbf{G}_{ijs}^{uvw}(x,y,z))|$.
\begin{align*}
& \delta^{-1/200}\Big(\sum_{(x,y,z)\in \Sigma_2(ijs,uvw)}|K_3^{(2)}(\mathbf{G}_{ijs}^{uvw}(x,y,z))|-(1-2\delta^{1/100}-\delta)|K_3^{(2)}(\mathbf{G}_{ijs}^{uvw}))|\Big)\\
&\leq \delta^{-1/200}|K_3^{(2)}(\mathbf{G}_{ijs}^{uvw}))|3\delta^{1/100}\\
&\leq 3\delta^{1/200}|K_3^{(2)}(\mathbf{G}_{ijs}^{uvw}))|.
\end{align*}
Consequently,
\begin{align*}
\sum_{(x,y,z)\in \Sigma_4(ijs,uvw)}|K_3^{(2)}(\mathbf{G}_{ijs}^{uvw}(x,y,z))|&\geq |K_3^{(2)}(\mathbf{G}_{ijs}^{uvw})|(1-3\delta^{1/200}).
\end{align*}

We have now shown that $\bigcup_{(x,y,z)\in \Sigma_4(ijs,uvw)}K_3^{(2)}(\mathbf{G}_{ijs}^{uvw})$ is most of $K_3^{(2)}(\mathbf{G}_{ijs}^{uvw})$, and for all $(x,y,z)\in \Sigma_4(ijs,uvw)$, $\mathbf{G}_{ijs}^{uvw}(x,y,z)$ is an $\e_1/6$-homogeneous triad of $\calQ$.  For all $(x,y,z)\in \Sigma_4(ijs,uvw)$, $W_{ij}^u(x),W_{is}^v(y),W_{js}^w(z)$ all have $\dev_2(\e_2(\ell_1),1/\ell_1)$, and thus, by Proposition \ref{prop:homimpliesrandome},  $\mathbf{G}_{ijs}^{uvw}(x,y,z)$ has $\dev_{2,3}(\e_1,\e_2(\ell_1))$ with respect to $H$.

Using this and our lower bound on the size of $Y_3$, we can now give the following lower bound on the number of triples $xyz\in {V\choose 3}$ in a $\dev_{2,3}(\e_2(\ell_1),\e_1)$-regular triad of $\calP$. 
\begin{align*}
\sum_{W_{ij}^uW_{is}^vW_{js}^w\in \Omega_3}\sum_{(x,y,z)\in \Sigma_4(ijs,uvw)}|K_3^{(2)}(\mathbf{G}_{ijs}^{uvw}(x,y,z))|&\geq \sum_{W_{ij}^uW_{is}^vW_{js}^w\in \Omega_3}(1-3\delta^{1/200})|K_3^{(2)}(\mathbf{G}_{ijs}^{uvw})|\\
&=(1-3\delta^{1/200})\sum_{W_{ij}^uW_{is}^vW_{js}^w\in \Omega_3}|K_3^{(2)}(\mathbf{G}_{ijs}^{uvw})|\\
&\geq (1-3\delta^{1/200})(1-3\delta^{1/2}){n\choose 3}\\
&\geq (1-\e_1){n\choose 3},
\end{align*}
where the last inequality is by definition of $\delta$.  This finishes the proof.

\end{proofof}

\vspace{2mm}

\appendix

\section{Proof of Proposition \ref{prop:homimpliesrandome}}\label{app:counting}

We will use the following fact.

\begin{lemma}\label{lem:count}
For all $\delta,r,\mu \in (0,1]$ satisfying $2^{12}\delta<\mu^2r^{12}$, the following holds.   Suppose $G=(V_1\cup V_2\cup V_3, E)$ is a $3$-partite graph such that for each $ij\in {[3]\choose 2}$, $||V_i|-|V_j||\leq \delta |V_i|$ and $G[V_i,V_j]$ has $\dev_2(\delta,r)$.  Given $u_0v_0w_0\in K_3^{(2)}(G)$, define 
$$
K_{2,2,2}[u_0,v_0,w_0]=\{u_1v_1w_1\in K_3[V_1,V_2,V_3] : \text{for each }\e\in \{0,1\}^3, (u_{\e_1},v_{\e_2},w_{\e_3})\in K_3^{(2)}(G)\}.
$$
Then if $J:=\{uvw\in K_3^{(2)}(G):  |K_{2,2,2}[u_0,v_0,w_0]|\leq (1+\mu)r^9|V_1||V_2||V_3|\}$,  we have that $|J|\geq (1-\mu)r^3|V_1||V_2||V_3|$.
\end{lemma}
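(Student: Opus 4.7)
The plan is to estimate the first and second moments of the function $N(u_0v_0w_0) := |K_{2,2,2}[u_0,v_0,w_0]|$ over the triangles of $G$, and then conclude by Chebyshev's inequality. The target concentration around the mean $N_0 := r^9 |V_1||V_2||V_3|$ is necessary because Markov's inequality applied to the first moment alone only gives a useless bound: the total octahedron count is roughly $N_0 \cdot |K_3^{(2)}(G)|$, so Markov at threshold $(1+\mu)N_0$ would allow nearly all of the triangles to be bad.

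First I would bound $\sum_{u_0v_0w_0 \in K_3^{(2)}(G)} N(u_0v_0w_0)$. This sum equals the number of labeled octahedra in $G$: sextuples $(u_0,u_1,v_0,v_1,w_0,w_1)$ with all eight triples $(u_i,v_j,w_k)$ forming triangles of $G$. Writing each edge indicator as $d + g$ where $g$ is the mean-zero deviation function supplied by $\dev_2$ and expanding the product of the twelve indicators, the main term is $d_{12}^4 d_{13}^4 d_{23}^4 |V_1|^2|V_2|^2|V_3|^2 = (r \pm \delta)^{12} |V_1|^2|V_2|^2|V_3|^2$, and every other term contains at least one full four-point deviation pattern on some bipartite pair, which can be bounded either directly by $\dev_2$ or after an application of Cauchy--Schwarz on the remaining variables. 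This is the standard octahedron counting lemma (cf.\ \cite{Gowers.20063gk} Theorem~6.8) and yields an estimate of the form $(r^{12} \pm \eta_1)|V_1|^2|V_2|^2|V_3|^2$, where $\eta_1$ is some fixed root of $\delta$.

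Next I would compute the second moment $\sum N(u_0v_0w_0)^2$ in exactly the same manner. Squaring counts configurations $(u_0,u_1,u_2,v_0,v_1,v_2,w_0,w_1,w_2)$ in which $(u_1,v_1,w_1)$ and $(u_2,v_2,w_2)$ each complete to an octahedron over the shared triangle $(u_0,v_0,w_0)$. The required edges form a fixed $7$-edge pattern on $9$ vertices in each of the three bipartite pieces, and the same expansion-plus-Cauchy--Schwarz argument used for the octahedron count gives $\sum N^2 = (r^{21} \pm \eta_2) |V_1|^3|V_2|^3|V_3|^3$. Combined with $|K_3^{(2)}(G)| = (r^3 \pm \eta_0)|V_1||V_2||V_3|$ from Proposition~\ref{prop:counting}, a short expansion of $\sum (N - N_0)^2 = \sum N^2 - 2 N_0 \sum N + |K_3^{(2)}(G)|\, N_0^2$ then yields the variance-type bound
\[
\sum_{u_0v_0w_0 \in K_3^{(2)}(G)} \bigl(N(u_0v_0w_0) - N_0\bigr)^2 \;\leq\; \eta \cdot r^{18}|V_1|^3|V_2|^3|V_3|^3
\]
for some $\eta$ polynomial in $\delta$ and independent of $r$. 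Markov applied to $(N - N_0)^2$ then gives
\[
\bigl|\{u_0v_0w_0 \in K_3^{(2)}(G) : N > (1+\mu) N_0\}\bigr| \;\leq\; \frac{\sum (N - N_0)^2}{(\mu N_0)^2} \;\leq\; \frac{\eta}{\mu^2 r^{12}} \cdot r^3 |V_1||V_2||V_3|,
\]
and the hypothesis $2^{12}\delta < \mu^2 r^{12}$ (whose $r^{12}$ factor is exactly the twelfth-power density appearing in the main term) is tuned so that this upper bound is at most $\mu r^3 |V_1||V_2||V_3|$, giving $|J| \geq (1-\mu)r^3|V_1||V_2||V_3|$ as required.

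The main obstacle will be Step~2: the $7$-edge bipartite pattern governing the second moment is less standard than the $4$-cycle of $\dev_2$ or the octahedron of the first moment, and one must verify that every error term produced by the expansion is of strictly smaller order than $r^{21}$, absorbable by the strong quantitative hypothesis $2^{12}\delta < \mu^2 r^{12}$. This amounts to ordering the twenty-one indicator factors so that iterated Cauchy--Schwarz on one bipartite pair at a time reduces every non-main term to a $\dev_2$ sum on some pair, and then tracking how the $r$'s from the remaining deterministic edges combine with the $\delta$ contributed by the deviation factor.
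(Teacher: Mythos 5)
Your proposal takes a genuinely different route from the paper, and it is worth spelling out why.

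The paper's Appendix proof is a pure first-moment argument: it invokes the octahedron counting lemma (Gowers, Theorem~3.5) to upper bound $|K_{2,2,2}[V_1,V_2,V_3]| = \sum_{u_0v_0w_0\in K_3^{(2)}(G)} |K_{2,2,2}[u_0,v_0,w_0]|$ by $(r^{12}+2^{12}\delta^{1/4})|V_1|^2|V_2|^2|V_3|^2$, and then argues (by contradiction) à la Markov. Your proposal instead computes both the first and second moments of $N(u_0v_0w_0):=|K_{2,2,2}[u_0,v_0,w_0]|$ and applies Markov to $(N-N_0)^2$, i.e.\ Chebyshev. This requires the additional (and less standard) counting lemma for the nine-vertex configuration with the seven-edge bipartite pattern $\{u_0v_0,u_0v_1,u_1v_0,u_1v_1,u_0v_2,u_2v_0,u_2v_2\}$ on each of the three sides, which you correctly flag as the main technical burden: one must organize a Cauchy--Schwarz/expansion argument for this pattern and verify that every non-main term is absorbed.

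Your instinct that the first moment alone is insufficient is in fact well founded, and this is where your plan and the paper diverge in substance rather than style. Markov at the threshold $(1+\mu)r^9|V_1||V_2||V_3|$ gives that the bad set has size at most $\frac{|K_{2,2,2}|}{(1+\mu)r^9|V_1||V_2||V_3|} \approx \frac{1}{1+\mu}\,|K_3^{(2)}(G)|$, whence $|J| \gtrsim \frac{\mu}{1+\mu}r^3|V_1||V_2||V_3|$. For $\mu$ small this is far weaker than the claimed $(1-\mu)r^3|V_1||V_2||V_3|$; indeed $\frac{\mu}{1+\mu}\geq 1-\mu$ only for $\mu\geq(\sqrt 5-1)/2$. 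The appendix proof, as written, does contain a Markov-type step, and on a close read the contradiction argument there appears to involve a sign/variable confusion (the "suppose $|J|>(1-\mu)r^3\ldots$'' should involve the complement of $J$, and the derived inequality $r^{12}+2^{12}\delta^{1/4}>(1-\mu^2)r^{12}$ is always true, so the concluding step "$\mu^2r^{12}<2^{12}\delta^{1/4}$, a contradiction'' does not follow as stated). So a Chebyshev-type argument of the kind you outline, not a bare Markov bound, seems to be what is actually needed for the concentration strength in the statement.

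One caveat: you should carry the error terms through more carefully before claiming the hypothesis $2^{12}\delta<\mu^2 r^{12}$ suffices. The generalized counting lemma for the nine-vertex pattern will contribute an \emph{additive} error of the form $C\delta^{1/4}|V_1|^3|V_2|^3|V_3|^3$, and after the exact cancellation of the $r^{21}$ main terms in $\sum N^2 - 2N_0\sum N + |K_3^{(2)}(G)|N_0^2$ (plus the bias correction from $N_0$ not being exactly the empirical mean), the Chebyshev step forces a requirement of the shape $\delta^{1/4}\lesssim \mu^3 r^{21}$. Whether the stated hypothesis (or the parameter choices $\mu=d_2^{12}$, $\delta\leq(d_2/2)^{48}$ used in the application) are strong enough to meet this is a genuine bookkeeping issue that you should resolve explicitly rather than asserting it is "tuned'' to work.
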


\begin{proof}
Let $K^G_{2,2,2}[V_1,V_2,V_3]$ be the set
$$
\{(u_0,u_1,w_0,w_1,z_0,z_1)\in V_1^2\times V_2^2\times V^3_3: \text{ for each }\e\in \{0,1\}^3, u_{\e_1}w_{\e_2}z_{\e_3}\in R\cap K_3^{(2)}(G)\}.
$$
By Theorem 3.5 in \cite{Gowers.20063gk},
$$
|K_{2,2,2}[V_1,V_2,V_3]|\leq r^{12}|V_1|^2|V_2|^2|V_3|^2+2^{12}\delta^{1/4}|V_1|^2|V_2|^2|V_3|^2.
$$
Suppose towards a contradiction that $|J|>(1-\mu)r^3|V_1||V_2||V_3|$.  Then 
$$
 |K_{2,2,2}[V_1,V_2,V_3]|\geq |J|(1+\mu)r^9|V_1||V_2||V_3|>(1-\mu^2)r^{12}|V_1|^2|V_2|^2|V_3|^2,
$$
Combining with the above, this implies $r^{12}+2^{12}\delta^{1/4}>(1-\mu^2)r^{12}$, which implies $\mu^2r^{12}<2^{12}\delta^{1/4}$, a contradiction.
\end{proof}

\noindent {\bf Proof of Proposition \ref{prop:homimpliesrandome}.} Fix $0<\e<1/2$, $0<d_2<1/2$, and $0<\delta\leq (d_2/2)^{48}$, and choose $N$ sufficiently large.

Suppose $H=(V_1\cup V_2\cup V_3, R)$ is a $3$-partite $3$-graph on $n\geq N$ vertices and for each $i,j\in [3]$, $||V_i|-|V_j||\leq \delta |V_i|$.  Suppose $G=(V_1\cup V_2\cup V_3,E)$ is a $3$-partite graph, where for each $1\leq i<j\leq 3$, $G[V_i,V_j]$ has $\dev_2(\delta,d_2)$, and assume $|R\cap K_3^{(2)}(G)|\leq \e |K_3^{(2)}(G)|$.  Let $d$ be such that $|R\cap K_3^{(2)}(G)|=d|K_3^{(2)}(G)|$.  By assumption $d\leq \e$.  Let $g(x,y,z):{V\choose 3}\rightarrow [0,1]$ be equal to $1-d$ if $xyz\in R\cap K_3^{(2)}(G)$, $-d$ if $xyz\in K_3^{(2)}(G)\setminus R$ and $0$ otherwise. Given $u_0v_0w_0\in K_3^{(2)}(G)$, define 
$$
K_{2,2,2}[u_0,v_0,w_0]=\{u_1v_1w_1\in K_3[V_1,V_2,V_3] : \text{for each }(i,j,k)\in \{0,1\}^3, (u_{i},v_{j},w_{k})\in K_3^{(2)}(G)\}.
$$
Let $\mu=d_2^{12}$. Note that $2^{12}\delta<(d_2/2)^{36}<d_2^{36}=\mu^2 d_2^{12}$.  Set 
$$
J:=\{uvw\in K_3^{(2)}(G):  |K_{2,2,2}[u_0,v_0,w_0]|\leq (1+\mu)d_2^9|V_1||V_2||V_3|\}.
$$
By Lemma \ref{lem:count}, we have that $|J|\geq (1-\mu)d_2^3|V_1||V_2||V_3|$.  Now set
$$
I_1=\{(u_0,u_1,w_0,w_1,z_0,z_1)\in V_1^2\times V_2^2\times V^3_3: \text{ for each }(i,j,k)\in \{0,1\}^3, u_iw_jz_k\in R\cap K_3^{(2)}(G)\}
$$
 and let 
 $$
 I_2=\{(u_0,u_1,w_0,w_1,z_0,z_1)\in (V_1^2\times V_2^2\times V^3_3)\setminus I_1: \text{ for each }(i,j,k)\in \{0,1\}^3, u_iw_jz_k\in K_3^{(2)}(G)\}.
 $$

Then 
\begin{align*}
\sum_{u_0,u_1\in V_1}\sum_{w_0,w_1\in V_2}\sum_{z_0,z_1\in V_3}\prod_{(i,j,k)\in \{0,1\}^3}g(u_i,w_j,z_k)&\leq |\sum_{u_0,u_1\in V_1}\sum_{w_0,w_1\in V_2}\sum_{z_0,z_1\in V_3}\prod_{(i,j,k)\in \{0,1\}^3}g(u_i,w_j,z_k)|\\
&\leq \sum_{u_0,u_1\in V_1}\sum_{w_0,w_1\in V_2}\sum_{z_0,z_1\in V_3}|\prod_{(i,j,k)\in \{0,1\}^3}g(u_i,w_j,z_k)|\\
&=\sum_{(u_0,u_1,w_0,w_1,z_0,z_1)\in I_1}(1-d)^9 \\
&+ \sum_{(u_0,u_1,w_0,w_1,z_0,z_1)\in I_2}|\prod_{(i,j,k)\in \{0,1\}^3}g(u_i,w_j,z_k)|.
\end{align*}

For each $(u_0,u_1,w_0,w_1,z_0,z_1)\in I_2$,  $|\prod_{(i,j,k)\in \{0,1\}^3}g(u_i,w_j,z_k)|\leq d(1-d)^8$, since at least one of the $g(u_i,w_j,z_k)$ is equal to $-d$, and $|-d|<|1-d|$ (since $d\leq \e<1/2$).  Thus we have by above, that
$$
\sum_{u_0,u_1\in V_1}\sum_{w_0,w_1\in V_2}\sum_{z_0,z_1\in V_3}\prod_{(i,j,k)\in \{0,1\}^3}g(u_i,w_j,z_k)\leq (1-d)^9|I_1|+d(1-d)^8|I_2|
$$
Note 
\begin{align*}
|I_1|&\leq \sum_{u_0w_0z_0\in J}|K_{2,2,2}(u_0,w_0,z_0)|+\sum_{u_0w_0z_0\in R\setminus J}|K_{2,2,2}(u_0,w_0,z_0)|\\
&\leq |J|(1+\mu)d_2^9|V_1||V_2||V_3|+|R\setminus J||R|\\
&\leq |R|(1+\mu)d_2^9|V_1||V_2||V_3|+\mu d_2^3|V_1||V_2||V_3|d|K_3^{(2)}(G)|\\
&\leq d|K_3^{(2)}(G)|(1+\mu)d_2^9|V_1||V_2||V_3|+\mu d_2^3|V_1||V_2||V_3|d|K_3^{(2)}(G)|\\
&\leq |V_1||V_2||V_3||K_3^{(2)}(G)|(d(1+\mu)d_2^{12} +  d d_2^{12}),
\end{align*}
where the last inequality is by definition of $\mu$.  By the counting lemma (Theorem 3.5 in \cite{Gowers.20063gk}), $|K_3^{(2)}(G)|\leq (1+2^3\delta^{1/4})|V_1||V_2||V_3|$.  Therefore, we have that 
$$
|I_1|\leq |V_1|^2|V_2|^2|V_3|^2(1+2^3\delta^{1/4})(d(1+\mu)d_2^{12} +  d d_2^{12})\leq 3d d_2^{12}|V_1|^2|V_2|^2|V_3|^2.
$$
On the other hand, $|I_2|\leq |K_{2,2,2}[V_2,V_2,V_3]|$, which, by Theorem 3.5 in \cite{Gowers.20063gk}, has size at most $(d_2^{12}+2^{12}\delta^{1/4})|V_1|^2|V_2|^2|V_3|^2$.  Combining the bounds above with the fact that $d\leq \e$, we have that 
\begin{align*}
\sum_{u_0,u_1\in V_1}\sum_{w_0,w_1\in V_2}\sum_{z_0,z_1\in V_3}\prod_{(i,j,k)\in \{0,1\}^3}g(u_i,w_j,z_k)&\leq (1-d)^9|I_1|+d(1-d)^8|I_2|\\
&\leq |V_1|^2|V_2|^2|V_3|^2(3\e d_2^{12}+ \e (d_2^{12}+2^{12}\delta^{1/4}))\\
&\leq 6\e d_2^{12}|V_1|^2|V_2|^2|V_3|^2,
\end{align*}
where the last inequality is since $\delta<(d_2/2)^{48}$.  This shows $(H,G)$ has $\dev_{2,3}( \delta,6\e)$.

\end{document}